\documentclass{bull-l}

\usepackage{graphicx}
\usepackage{amssymb}
\usepackage{mathrsfs}
\usepackage{url}


\newtheorem{theorem}{Theorem}[section]
\newtheorem{lemma}[theorem]{Lemma}
\newtheorem{proposition}[theorem]{Proposition}
\newtheorem{corollary}[theorem]{Corollary}

\theoremstyle{definition}
\newtheorem{definition}[theorem]{Definition}
\newtheorem{example}[theorem]{Example}
\newtheorem{examples}[theorem]{Examples}

\theoremstyle{remark}
\newtheorem{remark}[theorem]{Remark}

\numberwithin{equation}{section}

\newcommand\SU{{\mathrm{SU}}}
\newcommand\GL{{\mathrm{GL}}}

\newcommand\U{{\mathrm U}}

\newcommand\Z{{\mathbb Z}}

\newcommand\N{{\mathbb N}}
\newcommand\C{{\mathbb C}}
\newcommand\R{\mathbb{R}}
\newcommand\Hom{\mathrm{Hom}}
\newcommand\End{\mathrm{End}}
\newcommand\Tr{\mathrm{Tr}}
\newcommand\tr{\mathrm{tr}}
\newcommand\hol{\mathrm{hol}}

\newcommand\E{\mathbb{E}}

\newcommand\vol{\mathrm{vol}}

\newcommand\Gbb{\mathbb{G}}
\newcommand\Cgg{{\mathscr{C}_{\mathbb{G}}^G}}
\newcommand\YM{{\mathrm{YM}}}
\newcommand\Pfr{{\mathscr{P}}}
\newcommand\Gfr{{\mathscr{G}}}
\newcommand\Tbb{{\mathbb{T}}}
\newcommand\Ufr{{\mathscr{U}}}

\begin{document}

\title[$\mathrm{YM}_2$ via integrable probability]{Two-dimensional Yang--Mills theory via integrable probability}


\author{Thibaut Lemoine}
\address{Institut de Recherche Math\'ematique Avanc\'ee, UMR 7501, Universit\'e de Strasbourg, 7 rue René Descartes,
67084 Strasbourg, France}
\email{thibaut.lemoine@unistra.fr}
\thanks{}


\subjclass[2020]{Primary 43A75, 60B15 81T13, Secondary 05A17, 81T35}

\date{}

\dedicatory{}

\begin{abstract}
In this paper, we review the construction and large $N$ study of the continuous two-dimensional Yang--Mills theory with gauge group $\mathrm{U}(N)$ through probability, combinatorics and representation theory. In the first part, we define the continuous Yang--Mills measure using Markovian holonomy fields, following a construction by L\'evy, then we show in the second part how to derive the character expansion of the partition function for any compact structure group from this setting. We continue with two developments obtained in the last few years by Dahlqvist, Lemoine, L\'evy and Ma\"ida with similar approaches with respect to the partition function: its large-$N$ asymptotics on all compact surfaces for the structure group $\mathrm{U}(N)$, and its $\frac{1}{N}$ expansion on a torus with an interpretation in terms of random surfaces.
\end{abstract}

\maketitle


\section{Introduction}

Yang--Mills theory, named after two physicists who introduced it in the '50s, is a non-abelian gauge theory that aims to describe, in a unified way, fundamental interactions between elementary particles. It is a quantum field theory based on the Yang--Mills action, a functional on the space of connections of a principal bundle. In the framework of the \emph{Standard model} of particle physics, it describes three of the four fundamental interactions:
\begin{enumerate}
\item Electromagnetism,
\item Weak nuclear force,
\item Strong nuclear force.
\end{enumerate}
There have been many mathematical developments about Yang--Mills theory since the '70s. Although the rigorous mathematical construction of a quantum Yang--Mills theory on a continuous four-dimensional spacetime remains an open problem (it is even one of the seven \emph{Millenium Prize Problems} from Clay Mathematics Institute \cite{JW}), there have been significant advances in the understanding of the discrete theory in any dimension, as well as the continuous theory in two dimensions. We refer the reader to \cite{GP,DEFJKMMW} for general background in quantum field theory and Yang--Mills theory. This survey, mainly based on a probabilistic construction of Thierry L\'evy \cite{Lev03,Lev10} and recent papers of the author together with Antoine Dahlqvist and Myl\`ene Ma\"ida \cite{Lem,DL,Lem3,LM,LM2}, is written to target a broad audience, and aims to be an accessible introduction to the subject. We focus on an approach based on integrable probability, an evergrowing field that lies at the interface of integrable systems, discrete probabilistic models and representation theory; we refer for instance to \cite{BP17,Zyg} for recent reviews about integrable probability in the context of the so-called KPZ universality (which is seemingly unrelated but can be tackled with similar techniques). In the course of these notes, we will play with many objects from enumerative combinatorics, which have independent interest: maps, partitions or even ramified coverings. To keep a reasonable length, we will only present them superficially, and we will detail proofs when they are short and/or important enough for the understanding. Let us briefly summarize the scope of this paper, by starting with {\bf what is omitted}.

\begin{itemize}
\item We shall not discuss the lattice gauge theory on $\Z^d$, which is well-defined in any dimension (especially in the physically relevant dimension 4). For recent surveys and results, we refer to \cite{Cha19,Cha20,CPS} and references therein.
\item We will not describe the interplay between the large-$N$ asymptotics of the two-dimensional Yang--Mills measure and free probability theory, which is incarnated by the \emph{master field} described by Singer \cite{Sin}. We refer the interested reader to the papers \cite{Lev17,DN,DL,PPSY,DL2} where the master field was constructed on various surfaces, as well as the introductory notes \cite{Lev20}.
\item We will not explore the links between Yang--Mills theory on a compact surface of genus $g\geq 2$ and the integration on moduli spaces of flat connections, which was initiated by Witten \cite{Wit91}. We refer to \cite{Sen02,Sen03} for detailed mathematical developments.
\item Finally, we will not discuss the recent results about stochastic quantization of the Yang--Mills theory in dimensions 2 and 3 \cite{CCHS22,CheShe23,CCHS24}, which are of independent interest but develop completely different aspects -- see \cite{Che22} for a review.
\end{itemize}

\subsection{Formal definition and difficulties}

Let us first recall the historical definition of the Yang--Mills measure. We will assume basic knowledge of differential geometry (Riemannian manifolds, Lie groups, principal bundles) in this subsection, but it will not be essential for the rest of the paper and any reader unfamiliar with these notions can simply skip this subsection without harm.

The two main ingredients of the theory are a spacetime and a group of symmetries that describes gauge transformations. More precisely, we take a Riemannian manifold $(M,g)$ and a compact Lie group $G$, called \emph{structure group}. We assume that the corresponding Lie algebra $\mathfrak{g}$ is endowed with a $G$-invariant inner product $\langle\cdot,\cdot\rangle$. The spacetime and structure group can be put together into a $G$-principal bundle $\pi:P\to M$. The interactions are represented by the \emph{Yang--Mills Lagrangian}, which is a function on the space $\mathcal{A}$ of connections on $P$. The latter is an affine space of direction
\[
\Omega^1(M)\otimes\mathrm{ad}(P),
\]
the space of $\mathrm{ad}(P)$-valued 1-forms on $M$. The Yang--Mills action is then defined by
\begin{equation}
S_\YM(\omega) = \int_M\langle\Omega_\omega\wedge\star\Omega_\omega\rangle,
\end{equation}
where $\star$ is the Hodge star operator, $\langle\cdot,\cdot\rangle$ is the structure on $\mathrm{ad}(P)$ induced by the invariant inner product on $\mathfrak{g}$, and $\Omega_\omega$ is the curvature of the connection $\omega\in\mathcal{A}$.

The \emph{Yang--Mills measure} on $M$ with structure group $G$ is the measure \emph{formally defined} by
\begin{equation}
d\mu_\YM(\omega) = \frac{1}{Z_\YM}e^{\frac{\mathrm{i}}{2T}S_\YM(\omega)}\mathcal{D}\omega,
\end{equation}
where  $\mathcal{D}\omega$ is the Lebesgue measure on $\mathcal{A}$, $T>0$ is a \emph{coupling constant}, and $Z_\YM$ is a normalization constant that puts the total mass to 1. A first obvious problem is that such measure, assuming that it exists, is not positive. However, by analytic continuation, it can be turned into a true probability measure by operating a \emph{Wick rotation}:
\begin{equation}\label{eq:YM_heuristique_eucl}
d\mu'_\YM(\omega) = \frac{1}{Z_\YM}e^{-\frac{1}{2T}S_\YM(\omega)}\mathcal{D}\omega.
\end{equation}
The resulting object is called the \emph{Euclidean Yang--Mills measure}. Many critical issues remain: for instance, there exists no Lebesgue measure on $\mathcal{A}$, and even if it was the case, there would be no reason that the measure has finite mass. Even if we reduce the space of integration by taking the quotient by gauge symmetries, there is no well-defined corresponding measure on the quotient. In fact, such measure can be defined on an appropriate compactification \cite{AL}, but nothing guarantees that the Yang--Mills measure has a density with respect to it. A partial solution was however obtained by Ashtekar \emph{et al.} \cite{ALMMT} (in the sense that they were able to compute all relevant observables within this framework, on the plane and the cylinder, with structure group $\SU(2)$).

The first complete, rigorous constructions of path integrals for the continuous two-dimen\-sional Yang--Mills theory (also denoted by $\mathrm{YM}_2$) are due to Gross, Driver, King and Sengupta \cite{GKS,Dri,Sen97} using mainly stochastic analysis. Another construction was later obtained by L\'evy \cite{Lev03,Lev10}, using a combination of discrete gauge theory developed by Witten \cite{Wit91} and continuous Markov processes. We develop this approach in Section~\ref{sec:YM}: if we replace $M$ by a graph $\Gbb$ and the principal bundle $\pi:P\to M$ by a discrete trivial bundle $\pi':\Gbb\times G\to \Gbb$, we can make sense of appropriate integrals with respect to~\eqref{eq:YM_heuristique_eucl}. The collection of measures on appropriate graphs will be the finite-dimensional marginals of a continuous Markov process called the \emph{Yang--Mills holonomy field}, which can then be constructed by the Kolmogorov extension theorem. The distribution of this process is, in some sense, the ``right" continuous Yang--Mills measure (see the end of \S\ref{sec:continuous_extension} about why this is true).

\subsection{Harmonic analysis and partition function}

In two dimensions, the first interesting quantity to study is the partition function $Z_\YM$. The partition function for a structure group $G$ on a compact connected oriented surface of genus $g$ and area $t$ only depends on $G,g$ and $t$, and can be computed explicitly as an integral on the group $G$ using the previously introduced setting; we will denote by $Z_G(g,t)$ this partition function. Section~\ref{sec:AH_groupes} will be devoted to deriving a Fourier expansion of $Z_G(g,t)$, using general results of the representation theory of compact groups. We will thus recover, in a self-contained way, a formula initially found by Rusakov \cite{Rus2}.

\subsection{Large-$N$ limit and random surfaces}

Since the seminal paper of 't Hooft \cite{Hoo74}, it is known that Yang--Mills theory for $\SU(N)$ as $N$ goes to infinity gives an understanding of gauge theory for $\SU(3)$ at strong coupling, which is related to the question of the confinement of quarks (it is also named ``multicolor QCD" by physicists, because $\SU(3)$ Yang--Mills corresponds to the quantum chromodynamics, and $N=3$ corresponds to the number of colors of quarks and gluons). The question of the asymptotic behavior of observables of the theory for matrix Lie groups with size going to infinity has been developed since then under the name ``large-$N$ regime". We will consider the case of $G=\U(N)$ in Section~\ref{sec:SchurWeyl}, and describe the asymptotics of $Z_{\U(N)}(g,t)$ for all genera when $N$ tends to infinity, giving a proof of several results conjectured by physicists in the '90s \cite{Gro,DK,Rus,Dou}.

Another aspect of the large-$N$ regime, already present in \cite{Hoo74} but mainly developed in the two-dimensional setting by Gross and Taylor \cite{Gro,GT,GT2}, is the relationship between Yang--Mills theory and string theory, namely a theory of random surfaces/graphs. They conjectured that the partition function and Wilson loop expectations on a compact surface $\Sigma_g$ of genus $g\geq 0$ with structure group $\SU(N)$ could be rewritten as sums (or integrals) over ramified coverings $X\to\Sigma_g$. There have been many combinatorial developments related to this interaction, nowadays known as \emph{gauge/string duality}\footnote{In fact, such dualities are also conjectured for other types of gauge theories, such as supersymmetric Yang--Mills theory. The two most cited papers from high energy physics ever -- up to date -- are related to another kind of gauge/string duality called AdS/CFT correspondence \cite{Mal,Wit98}.}, and it is still an active field in physics \cite{PS,AKS}, but everything is written in terms of formal power series. This gauge/string duality is also conjectured to hold for other classical groups, such as $\U(N),\mathrm{SO}(N)$ or $\mathrm{Sp}(N)$, and the first proof of a true asymptotic expansion related to this gauge/string duality on compact surfaces was achieved for $\U(N)$ when the base space is a torus \cite{LM}. A complete solution, including the asymptotic expansion for all compact classical groups and a random surface formula, is provided in the recent paper \cite{LM2}. We will go through the main results and ideas in Section~\ref{sec:gauge-string}.

As a side note, another kind of gauge/string duality has been explored for the \emph{lattice Yang--Mills theory} in any dimension \cite{Jaf16,Cha19b,CPS,BCSK}. The latest works in this direction are partly based on the Weingarten calculus, which is another combinatorial toolbox coming from representation theory of compact classical groups. In these works, the authors obtained elegant expressions of Wilson loop expectations in terms of convergent surface sums, both in the finite $N$ and the large-$N$ regime.



\section{Two-dimensional Yang--Mills measure}\label{sec:YM}

\subsection{Surfaces and maps}

In these notes, a \emph{surface} will always denote a smooth real differential manifold of dimension 2. We shall start with a definition and some properties of topological maps, which will play the role of our discretized spacetime. We refer to \cite{LZ,Eyn} for more details on maps.

\begin{definition}
A \emph{graph} is a triple $\Gbb=(V,E,I)$ where $V$ and $E$ are respectively the sets of \emph{vertices} and \emph{edges}, and an \emph{incidence relation} $I\subset V\times E$, such that the cardinal of $\{v\in V: (v,e)\in I\}$ is 1 or 2 for all $e\in E$.
\end{definition}

The incidence relationship is an abstract encoding of the intuitive idea that the vertices are at the boundaries of edges. If $\Gbb=(V,E,I)$ is a graph,
\begin{enumerate}
\item Two edges $e_1,e_2\in E$ are \emph{adjacent} if they are incident to the same vertex $v\in V$. They form a \emph{double edge} if they are incident to the two same vertices $v_1,v_2\in V$;
\item An edge $e\in E$ is a \emph{loop} if it is incident to only one vertex.
\end{enumerate}

\begin{definition}\label{def:carte}
A \emph{topological map} is a graph (possibly with loops and double edges) $\Gbb=(V,E,I)$ endowed with an embedding $\theta:\Gbb\to\Sigma$, where $\Sigma$ is a surface, such that:
\begin{itemize}
\item The images of two distinct vertices $v_1, v_2\in V$ by $\theta$ are distinct points of $\Sigma$,
\item The images of edges $e\in E$ are continuous curves $\theta_e:[0,1]\to\Sigma$ that only meet at their endpoints $\underline{e}=\theta_e(0)$ and $\overline{e}=\theta_e(1)$,
\item The complement in $\Sigma$ of the \emph{skeleton} $\mathrm{Sk}(\Gbb)=\bigcup_{e\in E}\theta_e$ of $\Gbb$ divides into one or several connected components $f_1,\ldots,f_k$, each simply connected, called \emph{faces}.
\end{itemize}
We denote by $F=\{f_1,\ldots,f_k\}$ the set of faces of the map.
\end{definition}

Usually, we will denote $\Gbb=(V,E,F)$ the map, and the embedding will be implicit. An example of map is given in Figure~\ref{fig:map_torus}. Edges $e\in E$ have an orientation by default, in the sense that the curve $\theta(e)$ inherits the orientation of its parametrized path, and we shall consider the action of $\{\pm1\}$ on $E$ given by:
\begin{itemize}
\item $e^1$ is simply $e$,
\item $e^{-1}$ is the edge $e$ with reverse orientation, i.e. corresponds to the paramet\-rized path $\theta_e^{-1}:t\mapsto \theta_e(1-t)$.
\end{itemize}

\begin{figure}[t!]
    \centering
    \includegraphics[width=\linewidth]{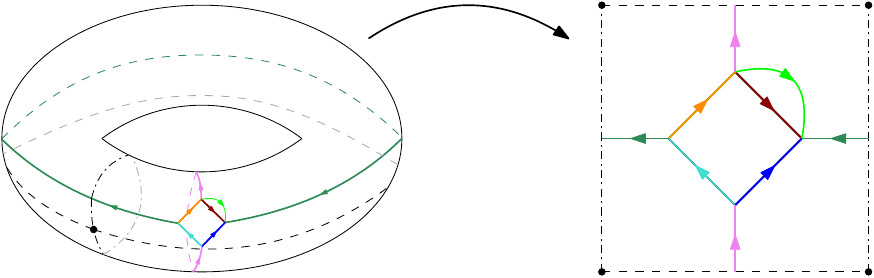}
    \caption{\small An oriented map of genus 1, embedded in a torus. It has 4 vertices, 7 edges and 3 faces.}
    \label{fig:map_torus}
\end{figure}

A topological map is an example of two-dimensional CW-complex in algebraic topology: its vertices (resp. edges, faces) are its 0-cells (resp. 1-cells, 2-cells), and the skeleton of the map is exactly the skeleton of the complex. Moreover, if the underlying surface is oriented, then the faces naturally inherits this orientation.

\begin{examples}
Among the most studied maps, let us mention triangulations (resp. quadrangulations): they are maps where all faces are triangles (resp. quadrilaterals). The study of random triangulations/quadrangulations is still a very active field, see for instance \cite{GM21,BL}.
\end{examples}

Topological maps can be put into equivalence classes up to orientation-preserving homeomorphisms, so that they do not depend on the embedding: this leads to the notion of \emph{combinatorial map}. These objects have attracted a growing interest in the last 30 years\footnote{Physicists were actually interested in them much earlier, starting with the work of 't Hooft \cite{Hoo74}, but they became popular in the mathematics community after Kontsevich used them to prove Witten's conjecture \cite{Kon}.}. A beautiful application of the study of maps is the classification of surfaces, see \cite{Lab}.

\begin{theorem}[Classification of surfaces]
Let $\Sigma$ be a compact connected surface without boundary.
\begin{itemize}
\item If $\Sigma$ is orientable, then it is homeomorphic to one of the following:
\begin{enumerate}
\item A sphere,
\item The connected sum of $g\geq 1$ tori.
\end{enumerate}
In this case, it is said to have \emph{genus} $g$, with $g=0$ by convention if it is homeomorphic to a sphere.
\item If $\Sigma$ is not orientable, then it is homeomorphic to the connected sum of $k\geq 1$ real projective planes.
\end{itemize}
\end{theorem}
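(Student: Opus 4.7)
The plan is to use the language of topological maps from Definition~\ref{def:carte} and reduce the classification to a purely combinatorial statement about polygons with boundary identifications. First I would invoke Rad\'o's theorem, which asserts that any compact surface admits a triangulation, i.e.\ a topological map all of whose faces are triangles. This is the main analytic obstacle: one needs some genuine input from the topology of surfaces (either Rad\'o's original argument via Jordan curves, or a more modern PL/smooth approach, depending on the regularity one assumes). In the setting of these notes, where surfaces are smooth, one may alternatively appeal to the existence of smooth triangulations via Whitehead's theorem, which is routine.

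Once a triangulation $\Gbb=(V,E,F)$ is fixed, I would build a spanning tree $T\subset\Gbb$ of the dual $1$-skeleton and cut $\Sigma$ along all edges of $E$ whose dual is not in $T$. Since $T$ is a tree and every face of $\Gbb$ is a triangle, this operation unfolds $\Sigma$ onto a single polygon $P$ in the plane whose boundary edges are identified in pairs (each original edge appears exactly twice on $\partial P$, with some orientation). Thus the surface is described by a combinatorial word $w$ in an alphabet with each letter appearing exactly twice, where each occurrence carries a $\pm 1$ exponent. Orientability of $\Sigma$ is equivalent to each letter appearing once with exponent $+1$ and once with exponent $-1$.

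The core of the argument is then the combinatorial reduction of the word $w$ to a normal form. I would proceed by a finite sequence of elementary moves on the polygon $P$, each of which preserves the homeomorphism type of $\Sigma$: (i) cancellation of adjacent pairs $aa^{-1}$ when the polygon has more than two sides; (ii) identification of all vertices of $P$ to a single vertex in $\Sigma$, by cutting and regluing along suitable diagonals; (iii) grouping two ``crossing'' pairs $a,b$ into a commutator block $aba^{-1}b^{-1}$ (the handle case); (iv) grouping two occurrences of the same letter with the same exponent into a block $aa$ (the crosscap case); (v) the key lemma that one handle in the presence of at least one crosscap can be traded for two additional crosscaps, so that handles and crosscaps cannot coexist in the final form. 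The verification of each move is a small diagrammatic exercise.

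After these reductions, $w$ takes one of three normal forms: the trivial word (giving the sphere $S^2$), a product $\prod_{i=1}^{g}a_ib_ia_i^{-1}b_i^{-1}$ with $g\ge 1$ (giving the connected sum of $g$ tori), or a product $\prod_{i=1}^{k}a_ia_i$ with $k\ge 1$ (giving the connected sum of $k$ real projective planes). Orientability is preserved throughout, so the first two cases correspond exactly to $\Sigma$ orientable and the third to $\Sigma$ non-orientable, concluding the classification. The main obstacle, as stressed above, is the triangulation step; the rest is a mechanical, if delicate, induction on the number of sides of the polygon.
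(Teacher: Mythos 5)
Your proposal is the standard triangulation-plus-word-reduction proof, which is precisely the approach the paper alludes to when it says the theorem "is proved by showing that any such surface can be obtained as the gluing of a polygon along its oriented sides" (deferring details to the cited reference \cite{Lab}). Your version correctly supplies the two ingredients the paper leaves implicit --- the existence of a triangulation (Rad\'o/Whitehead) and the elementary moves reducing the boundary word to the normal forms $\prod_i [a_i,b_i]$ or $\prod_i a_ia_i$ --- so it is a faithful, complete rendering of the same argument.
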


This theorem is proved by showing that any such surface can be obtained as the gluing of a polygon along its oriented sides (or two disks in the case of the sphere). The polygon is a $4g$-gon in the case of an orientable surface of genus $g\geq 1$, which can be seen as a map with one face, called \emph{fundamental map} (as the polygon is often called \emph{fundamental polygon}). The sides of the polygon, in counterclockwise order, can be chosen to be
\[
[a_1,b_1]\ldots[a_g,b_g]=a_1b_1a_1^{-1}b_1^{-1}\ldots a_gb_ga_g^{-1}b_g^{-1},
\]
and the $a_i,b_i$ represent oriented edges of the map, with the convention that two edges of the same label (up to orientation) are glued together. An example of map in genus 2 is given in Figure~\ref{fig:map_genus_2} below. Going from the planar representation to the the embedded representation is an exercise of visualization: say that one wants to follow a small loop oriented counterclockwise (in black) around the vertex of the map, then record all oriented edges that are crossed from the right. The loop crosses the following edges: $b_1,a_1^{-1},b_1^{-1},a_1,b_2,a_2^{-1},b_2^{-1},a_2$.
\begin{figure}[b!]
    \centering
    \includegraphics[width=\linewidth]{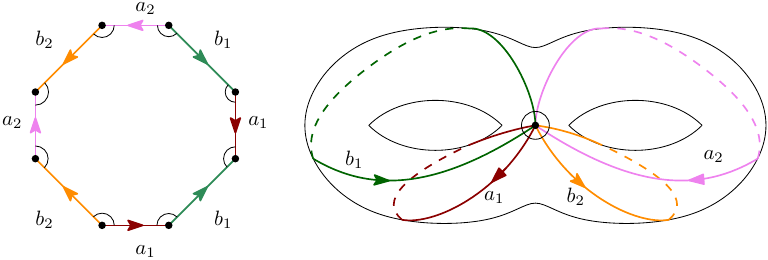}
    \caption{\small A map with one face in genus 2, traced in the plane (left) with edges of the same color pairwise identified, and traced in a genus 2 surface (right). It only has one vertex, marked by a black dot.}
    \label{fig:map_genus_2}
\end{figure}
The homotopy classes of the edges of the fundamental map generate the fundamental group of the underlying surface
\[
\pi_1(\Sigma)=\langle a_1,b_1,\ldots,a_g,b_g\ \vert \ [a_1,b_1]\ldots[a_g,b_g]=1\rangle,
\]
where we also denoted by $a_i,b_i$ the homotopy classes of the corresponding edges.

For any map $\Gbb=(V,E,F)$ embedded in a closed orientable surface $\Sigma$ of genus $g\geq 0$, the numbers of vertices, edges and faces are related through Euler's formula
\begin{equation}\label{eq:Euler}
\vert V\vert-\vert E\vert + \vert F\vert = 2-2g = \chi(\Sigma),
\end{equation}
where $\chi(\Sigma)$ is the Euler characteristic of the underlying surface.

In this paper, we will only consider closed orientable surfaces; a more general construction is detailed in \cite{Lev10}. We will henceforth denote by a \emph{surface of genus} $g$, a compact, connected, oriented surface of genus $g$.

\begin{definition}
Let $\Gbb=(V,E,F)$ be a topological map. A \emph{path} $\gamma$ in $\Gbb$ is either a single vertex $v\in V$ (it is a \emph{constant path}), or a concatenation of edges $e_1\cdots e_n$, where $\overline{e_i}=\underline{e_{i+1}}$ for all $1\leq i \leq n-1$. In the second case, we write $\underline{\gamma}=\underline{e_1}$ and $\overline{\gamma}=\overline{e_n}$ its endpoints, and $\ell(\gamma)=n$ its length. The path is a \emph{loop} if $\underline{\gamma}=\overline{\gamma}$.
\end{definition}

Constant paths in a map are also considered as loops by convention. For any loop $\gamma$ in $\Gbb,$ we call \emph{basepoint} the vertex $\underline{\gamma}=\overline{\gamma}.$ We denote by $\mathrm{P}(\Gbb)$ (resp. $\mathrm{P}_v(\Gbb)$, $\mathrm{L}(\Gbb)$, $\mathrm{L}_v(\Gbb)$) the set of paths (resp. paths of base $v\in V$, loops, loops of base $v\in V$) in $\Gbb$.

\subsection{Heat kernel on compact Lie groups}

\begin{definition}
A \emph{Lie group} is a group $G$ endowed with the structure of a smooth real manifold, such that the multiplication and the inversion are smooth.
\end{definition}

In what follows, we will denote by $1=1_G$ its neutral element. We need two objects from differential and integral calculus on $G$: the Laplace--Beltrami operator and the Haar measure. To be more specific, the Haar measure only requires a structure of a topological group, and the Laplacian a structure of a Riemannian manifold.

\begin{definition}
Let $G$ be a locally compact topological group. A nonnegative Radon measure $\mu$ on $G$ is \emph{left-invariant} if it satisfies for all $f\in\mathscr{C}_c(G)$
\[
\int_G f(gx)d\mu(x) = \int_G f(x)d\mu(x),\quad \forall g\in G.
\]
It is \emph{right-invariant} if it satisfies for all $f\in\mathscr{C}_c(G)$
\[
\int_G f(xg)d\mu(x) = \int_G f(x)d\mu(x),\quad \forall g\in G.
\]
A positive left-invariant (resp. right-invariant) on $G$ is called \emph{left Haar measure} (resp. \emph{right Haar measure}).
\end{definition}

A theorem due to von Neumann and Kakutani \cite{vN,Kak} states that for any compact topological group $G$, there exists a unique (up to a constant factor) Haar measure, which is simultaneously a left and a right Haar measure, and which has finite mass. We denote by $dg$ the corresponding probability measure.

If $G$ is a compact topological group and $f,g:G\to\R$ are Borel functions, their convolution is defined by
\[
f*g(x)=\int_G f(y)g(y^{-1}x)dy.
\]

\begin{definition}
Let $G\subset\GL(N,\C)$ be a compact matrix Lie group, whose Lie algebra $\mathfrak{g}=T_1G$ is endowed with a $G$-invariant inner product $\langle\cdot,\cdot\rangle$. Let $\{X_1,\ldots,X_d\}$ be an orthonormal basis of $\mathfrak{g}$ for this inner product. The \emph{Laplace--Beltrami} operator on $G$ is the differential operator $\Delta_G:\mathscr{C}^2(G)\to\C$ defined by
\begin{equation}
\Delta_G f(g) = \sum_{i=1}^d \frac{d^2}{dt^2}\bigg\vert_{t=0} f\left(ge^{tX_i}\right).
\end{equation}
The \emph{heat kernel} on $G$ is the convolution semigroup $(p_t)_{t\geq 0}$ solution of
\begin{equation}
\frac{d}{dt}p_t(g) =\frac12\Delta_G f(g),\quad \forall g\in G,
\end{equation}
with initial condition $\lim_{t\to 0^+} p_t = \delta_1$, where the limit has to be understood in the sense of distributions.
\end{definition}

From a probabilistic point of view, the heat kernel is the distribution of a Brownian motion on $G$ starting from 1. This approach has been fruitful to study the heat kernel on compact groups \cite{Lev08,LevMai}. In Section~\ref{sec:AH_groupes} we will use representation theory to obtain a spectral decomposition of the heat kernel on $G=\U(N)$ associated with this inner product.

\subsection{Discrete Yang--Mills measure}

As we have seen in the introduction, the Yang--Mills measure is formally a measure on the space of connections on the underlying spacetime. In the case of a graph, this space becomes finite-dimensional, so that the rigorous definition is within reach.

\begin{definition}\label{def:discrete_jauge}
Let $\Gbb=(V,E,I)$ be a graph, and $G$ be a compact matrix Lie group.
\begin{enumerate}
\item The \emph{trivial} $G$-\emph{principal bundle} above $\Gbb$ is $P=G^V$.
\item A \emph{section} of $P$ is simply an element of $P$, i.e. a function $\sigma:V\to G$. 
\item A \emph{connection} on $P$ is a function $\omega:E\to G$
\[
\omega:\left\lbrace\begin{array}{ccc}
E & \longrightarrow & G\\
e & \longmapsto & \omega(e),
\end{array}\right.
\]
that satisfies
\[
\omega(e^{-1})=\omega(e)^{-1},\quad \forall e\in E.
\]
\item Given a path $\gamma=e_1\cdots e_n$ in $\Gbb$, the \emph{holonomy} of the connection $\omega$ along $\gamma$ is given by
\[
\hol(\omega,\gamma):=h_\gamma = \omega(e_1)\ldots\omega(e_n).
\]
\item If moreover $\Gbb$ is a map, the \emph{curvature} of $\omega$ is the function
\[
\Omega^\omega:\left\lbrace\begin{array}{ccc}
F & \longrightarrow & G\\
f & \longrightarrow & \hol(\omega, \partial f),
\end{array}\right.
\]
where $\partial f$ is the boundary of $f$ oriented counterclockwise. We say that $\omega$ is \emph{flat} if $\Omega^\omega(f)=1$ for all $f\in F$.
\end{enumerate}
\end{definition}

There is a caveat in the previous definition: normally, connections and their curvature are respectively defined as $\mathfrak{g}$-valued $1$-forms and $2$-forms, whereas they are defined in Definition~\ref{def:discrete_jauge} as $G$-valued forms. This conversion from the Lie algebra to the Lie group turns the linear structure of connections into a multiplicative structure, and that is because we implicitly identified connections with their holonomies along edges. The curvature of a connection depends on a choice of basepoint for the boundary of each face, but all choices of basepoint give values of curvatures that are in the same conjugacy class in $G$. In the end, we will consider configurations to be discrete connections modulo gauge transformations, and in this quotient the definition of curvature is therefore uniquely defined. Furthermore, observe that one can interpret the curvature as a $G$-valued 2-form on $\Gbb$. More generally, 0-forms (resp. 1-forms, 2-forms) correspond to functions on the 0-cells (resp. 1-cells, 2-cells) of the map $\Gbb$ seen as a CW complex. For more details on this viewpoint, see \cite[Section 2]{DL2} for maps, or \cite[Section 2]{Cha20} for the lattice $\Z^d$.

\begin{examples}$ $

\begin{itemize}
\item If $\Gbb=\Z^d$ and $G=\Z_2$, a $G$-principal bundle endowed with a connection defines the model of \emph{Ising gauge theory} \cite{Cha20}: the sections of the bundle define a spin configuration, and the connection defines an interaction between adjacent spins, with the condition that the connection is flat. A generalized model has been recently studied in \cite{FLV}.
\item If we take $G=\U(1)$ and an arbitrary $\Gbb$, we recover the configuration space of \emph{cycle-rooted spanning forests} (CRSF) studied by Kenyon \cite{Ken11}.
\end{itemize}
\end{examples}

The space of connections is clearly in bijection with $G^{\vert E\vert}$, which is a compact Lie group. The \emph{uniform measure} on connections is taken to be the Haar probability measure on $G^{\vert E\vert}$.

\begin{definition}
Let $\Gbb=(V,E,I)$ be a graph endowed with a $G$-connection $h$. A \emph{gauge transformation} is a function $j:V\to G$. The gauge group $G^V$ acts on $G^E$ as follows:
\begin{equation}
(j\cdot \omega)(e) = j(\overline{e})^{-1}\omega(e) j(\underline{e}),
\end{equation}
and the \emph{configuration space} of the discrete gauge theory on $\Gbb$ with structure group $G$ is the quotient $\Cgg=G^E/G^V$ by this action.
\end{definition}

Intuitively, the discrete Yang--Mills measure would be a measure on $G^E$ invariant by this action, and is expected to descend to a measure on $\Cgg$. We can endow the set $\Omega^1(\Gbb,G)=G^E$ with two different sigma-algebras:
\begin{itemize}
\item The \emph{cylindrical} sigma-algebra $\mathcal{C}$ is the smallest sigma-algebra that makes the applications $h\mapsto h_\ell$ measurable for all loop $\ell$ in $\Gbb$;
\item The \emph{invariant} sigma-algebra $\mathcal{I}$ is the smallest sigma-algebra that makes the applications $f(h_{\ell_1},\ldots,h_{\ell_n})$ measurable, for any $n\geq 1$, any $f:G^n\to\R$ invariant by diagonal action of $G$ by conjugation, and any collection $(\ell_1,\ldots,\ell_n)$ of loops on $\Gbb$ with same basepoint.
\end{itemize}

Naturally, only the second one can be expected to descend to quotient into a sigma-algebra on $G^E/G^V$. By invariance of the Haar measure by translation, the Haar measure $d\omega$ on $\Omega^1(\Gbb,G)$ also descends to a measure on $G^E/G^V$, that we shall also denote by $d\omega$.

\begin{remark}
This configuration space is related to the notion of \emph{spin networks} \cite{Bae}. If one does not assume $G$ compact, this setting remains well-defined, and seems to be suitable for a formalization of quantum gravity \cite{Bae2,FreiLiv}. However, in this case, there is no more uniform probability measure: there are either Haar measures with infinite mass or probability measures that are not uniform.
\end{remark}

We are now able to define the discrete Yang--Mills measure.

\begin{definition}
Let $\Gbb=(V,E,F)$ be a topological map endowed in a surface of genus $g\geq 0$, endowed with a smooth measure of area $\vol$ and with total area $t>0$, and $G$ be a compact matrix Lie group. The \emph{Yang--Mills measure} on $\Gbb$ with structure group $G$ is the measure $\mu_{\Gbb,G,\Sigma,\vol}$ on $\Omega^1(\Gbb,G)$ defined by
\begin{equation}\label{eq:DriverSengupta}
\mu_{\Gbb,G,\Sigma,\vol}(d\omega) = \frac{1}{Z_G(g,t)}\prod_{f\in F}p_{\vol(f)}(\Omega^\omega(f))d\omega,
\end{equation}
where
\begin{equation}\label{eq:PartitionFunction}
Z_{\Gbb,G}(g,t) = \int_{G^E} \prod_{f\in F} p_{\vol(f)}(\Omega^\omega(f))d\omega
\end{equation}
is the \emph{partition function}.
\end{definition}

Equation~\eqref{eq:DriverSengupta} is named \emph{Driver--Sengupta formula}, from the two mathematicians who defined it first: Driver in the plane \cite{Dri} and Sengupta on compact surfaces \cite{Sen97}. As the heat kernel on $G$ is invariant by conjugation, the measure does not depend on the parametrization of the boundary of each face for the definition of the curvature, and the measure descends to a measure on the configuration space $\Cgg$. By construction, the measure is also invariant by area-preserving diffeomorphisms.

\begin{example}
Consider the map given in Figure~\ref{fig:map_example}, which has genus 1.
\begin{figure}[b!]
    \centering
    \includegraphics[width=0.4\linewidth]{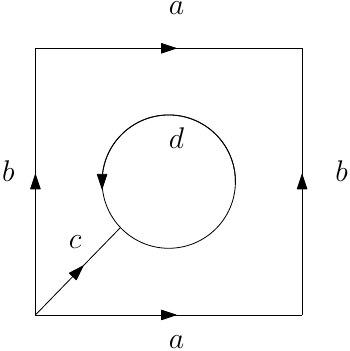}
    \caption{\small A map of genus 1. The edges with same label are glued together.}
    \label{fig:map_example}
\end{figure}
It is described by $\Gbb=(V,E,F)$, where $V=\{v_1,v_2\}$, $E=(a,b,c,d)$ and $F=\{f_1,f_2\}$ where $f_1$ has boundary $\partial f_1=d$ and $f_2$ has boundary $\partial f_2=d^{-1}c^{-1}aba^{-1}b^{-1}c$. If we set $t_i=\vol(f_i)$ for $i\in\{1,2\}$, as well as $t=t_1+t_2$ the total area of the underlying surface, the Driver--Sengupta formula becomes
\[
\mu_{\Gbb,G,\Sigma,\vol}(d\omega)=\frac{1}{Z_G(1,t)}p_{t_1}(\omega(d))p_{t_2}(\hol(\omega,d^{-1}c^{-1}aba^{-1}b^{-1}c))d\omega.
\]
Using the bijection $G^E\simeq G^{\vert E\vert}=G^4$, and denoting respectively by $x_1,x_2,x_3,x_4$ the variables of integration corresponding to the edges $a,b,c,d$, one gets
\[
\mu_{\Gbb,G,\Sigma,\vol}(d\omega)=\frac{1}{Z_G(1,t)}p_{t_1}(x_4)p_{t_2}(x_4^{-1}x_3^{-1}x_1x_2x_1^{-1}x_2^{-1}x_3)dx_1dx_2dx_3dx_4.
\]
In particular,
\[
Z_G(1,t)=\int_{G^4}p_{t_1}(x_4)p_{t_2}(x_4^{-1}x_3^{-1}x_1x_2x_1^{-1}x_2^{-1}x_3)dx_1dx_2dx_3dx_4.
\]
Using the semigroup property of heat kernel yields
\[
Z_G(1,t)=\int_{G^3}p_t(x_3^{-1}x_1x_2x_1^{-1}x_2^{-1}x_3)dx_1dx_2dx_3,
\]
and as $p_t(gxg^{-1})=p_t(x)$ for all $x,g\in G$, one finally gets
\[
Z_G(1,t)=\int_{G^2}p_t([x_1,x_2])dx_1dx_2,
\]
where $[x,y]=xyx^{-1}y^{-1}$ is the commutator of elements of $G$.
\end{example}

\begin{remark}
In the literature, this definition of the Yang--Mills measure is sometimes described as a discrete gauge theory for the \emph{heat kernel action}, or \emph{Villain action}, as opposed to the \emph{Wilson action} defined by Wilson in \cite{Wil74} by
\begin{equation}
S(\omega)=\sum_{f\in F}\Re(\Tr(1-\Omega^\omega(f))).
\end{equation}
In the case of the Wilson action, the Yang--Mills measure with coupling constant $\beta>0$ is defined as the Gibbs measure of inverse temperature $\beta$ and Hamiltonian $S$, that is,
\begin{equation}
\mu(d\omega)=\frac{1}{Z}e^{-\beta S(\omega)}d\omega.
\end{equation}
The advantage of the Wilson action, mainly used in lattice gauge theory \cite{Cha20,CPS}, is that it defines a simpler model in terms of statistical mechanics, but going from the discrete to the continuous model seems more complicated in general and requires renormalization -- which is not the case for the heat kernel action, as we shall see in a moment. Note that some recent papers establish a correspondence between Villain gauge theory and Wilson gauge theory \cite{CG,SSZ}, respectively in $\Z^d$ and in $\R^2$. In the sequel, we shall always use the heat kernel action.
\end{remark}

\begin{remark}
The Yang--Mills measure can also be defined for a surface $\Sigma$ with boundary. A way to do it is to see $\Sigma$ as a subset of a surface $\Sigma_0$ without boundary but with marked simple loops whose union gives $\partial\Sigma$, and the map embedded in $\Sigma$ is a graph $\Gbb$ embedded in $\Sigma_0$ such that $\partial\Sigma$ is a concatenation of edges of $\Gbb$, and whose faces that do not belong to $\Sigma$ are marked faces that do not contribute to the definition of the measure. One can then impose boundary conditions, cf. \cite{Lev10,DL}. In particular, the measure on $\R^2$ can be defined as a measure on a planar map (i.e. a map embedded in the sphere) with a marked face, the ``unbounded face".
\end{remark}

A fundamental property of the Yang--Mills measure is its invariance by subdivision. It implies that the partition function does not depend on the graph, but only on the topology and area of the surface. Before we state it, let us introduce a notation: given two topological maps $\Gbb$ and $\Gbb'$, we say that $\Gbb$ is \emph{finer} than $\Gbb'$, and write $\Gbb'\preceq\Gbb$, if all edges of $\Gbb'$ correspond to concatenations of edges of $\Gbb$. In this case, any face of $\Gbb'$ is the reunion of faces of $\Gbb$ there is a natural restriction map $\mathcal{R}:G^{E}\to G^{E'}$, given by the following: any edge $e'\in E'$ can be written as a concatenation $e'=e_1\ldots e_n$ of edges of $E$, hence for $\omega\in G^{E'}$, we can write
\[
\mathcal{R}(\omega)(e')=\omega(e_1\ldots e_n)=\omega(e_1)\ldots\omega(e_n).
\]

\begin{theorem}
Let $\Gbb=(V,E,F)$ be a topological map embedded in a surface $\Sigma$ of genus $g\geq 0$, and $\Gbb'=(V,E',F')$ be a topological map obtained by removing one edge of $\Gbb$, endowed with the same embedding. If we denote by $\mathcal{R}:G^{E}\to G^{E'}$ the restriction map, then
\begin{equation}
(\mathcal{R})_*\mu_{\Gbb,G,\Sigma,\vol} = \mu_{\Gbb',G,\Sigma,\vol}.
\end{equation}
In particular, the partition function $Z_{\Gbb,G}(g,t)$ does not depend on $\Gbb$, and satisfies
\begin{itemize}
\item If $g\geq 1$,
\begin{equation}\label{eq:PartitionFunction2}
Z_{\Gbb,G}(g,t)=Z_G(g,t) = \int_{G^{2g}} p_t([x_1,y_1]\cdots [x_g,y_g])dx_1dy_1\cdots dx_g dy_g.
\end{equation}
\item If $g=0$,
\begin{equation}\label{eq:PartitionFunction2bis}
Z_{\Gbb,G}(0,t)=Z_G(0,t) = p_t(1).
\end{equation}
\end{itemize}
\end{theorem}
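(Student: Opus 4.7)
The plan is to establish the subdivision invariance by a direct computation that integrates out the variable attached to the removed edge, using the convolution semigroup property and conjugation invariance of the heat kernel; the explicit formulas for the partition function then follow by iteratively reducing any map on $\Sigma$ to a canonical minimal representative of its genus.

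For the main computation, suppose the removed edge $e$ has two distinct faces $f_1,f_2$ on its two sides, of respective areas $t_1=\vol(f_1)$ and $t_2=\vol(f_2)$. Choosing basepoints compatibly with $e$, one can write
\[
\Omega^\omega(f_1)=\omega(e)\,y_1,\qquad \Omega^\omega(f_2)=\omega(e)^{-1}\,y_2,
\]
where $y_1,y_2\in G$ are the holonomies of the complementary portions of $\partial f_1$ and $\partial f_2$ and depend only on the restriction of $\omega$ to $E\setminus\{e\}$. The Driver--Sengupta density then isolates the integral over $\omega(e)$, which by the change of variable $u=\omega(e)\,y_1$ (using right-invariance of Haar) and the conjugation invariance of $p_t$ equals
\[
\int_G p_{t_1}(\omega(e)\,y_1)\,p_{t_2}(\omega(e)^{-1}\,y_2)\,d\omega(e)=(p_{t_1}*p_{t_2})(y_2y_1)=p_{t_1+t_2}(y_2y_1).
\]
This is precisely the contribution of the merged face $f'$ in the Driver--Sengupta formula for $\Gbb'$, whose boundary is the concatenation of the complementary arcs and whose area is $t_1+t_2$. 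Hence $\mathcal{R}_*\mu_{\Gbb,G,\Sigma,\vol}=\mu_{\Gbb',G,\Sigma,\vol}$; specializing to integrating the density against $\mathbf{1}$ yields invariance of the partition function.

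To obtain the explicit formulas, I would iterate this deletion procedure to bring any map on a surface of genus $g\geq 1$ down to the fundamental polygon of Figure~\ref{fig:map_genus_2}: one vertex, $2g$ edges labelled $a_1,b_1,\dots,a_g,b_g$, and one face whose boundary word is $[a_1,b_1]\cdots[a_g,b_g]$. Writing out the Driver--Sengupta formula on this minimal map directly produces \eqref{eq:PartitionFunction2}. For $g=0$, the same iteration terminates at the trivial map on the sphere with one vertex, no edges and one face whose boundary is empty, so that $\Omega^\omega(f)=1$ and the formula collapses to \eqref{eq:PartitionFunction2bis}.

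The main obstacle is precisely this reduction step: repeated edge deletion can become blocked at a unicellular map in which every remaining edge borders the unique face on both sides, without reaching the canonical fundamental polygon. To overcome this one may complement deletion by edge contraction along non-loops, which also preserves the measure by translation invariance of the Haar measure (integrating out $\omega(e)$ is equivalent to a gauge transformation setting $\omega(e)=1$ and identifying the two endpoints). One should also check at each step that the deleted edge separates two genuinely distinct simply connected faces, so that the resulting incidence data still defines a topological map in the sense of Definition~\ref{def:carte}.
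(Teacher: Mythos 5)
Your core computation is exactly the paper's: both proofs isolate the two heat-kernel factors attached to the faces adjacent to the removed edge, integrate out $\omega(e)$, and use conjugation invariance together with the convolution semigroup property $p_{t_1}*p_{t_2}=p_{t_1+t_2}$ to recover the contribution of the merged face. Where you diverge is in passing from invariance to the explicit formulas. The paper does not perform your iterative reduction at all: it asserts that the partition function, being the total mass of a measure invariant under edge removal, is independent of the map, and then evaluates the Driver--Sengupta formula on a convenient representative -- the one-faced $4g$-gon for $g\geq 1$, and for $g=0$ a planar map with two faces separated by a simple loop, which gives $\int_G p_{t_1}(x)p_{t_2}(x^{-1})\,dx=p_t(1)$. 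Strictly speaking, full independence of the partition function on the map requires comparing two arbitrary maps through a common refinement, which also involves subdividing edges by new vertices -- an operation not covered by the stated theorem and implicitly delegated to L\'evy's framework. Your observation that deletion alone can get stuck at a non-canonical unicellular map is therefore a genuine point the paper glosses over, and supplementing deletion with contraction of non-loop edges (gauge-fixing $\omega(e)=1$ and then integrating out the redundant variable by translation invariance of the Haar measure) is a correct way to close that gap. The two endpoints you and the paper reach for $g=0$ (your edgeless one-face map versus the paper's two-face map) both yield $p_t(1)$.
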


\begin{proof}
The proof consists into two steps. We first prove the result for the unnormalized measures $\tilde{\mu}_{\Gbb,G,\Sigma,\vol}$, then we show that the partition function does not depend on the graph. These two steps will imply the result for the normalized measures.

{\bf Step 1.} Let $e\in E$ be the edge removed from $\Gbb$ to get $\Gbb'$, so that $E=E'\cup\{e\}$. If we require that $\Gbb$ remains a map, it means that $e$ borders two faces $f_1,f_2$ of $\Gbb$, whose union is a single face $f_*\in F'$, see Figure~\ref{fig:face_split}. Set $F_0=F\setminus\{f_1,f_2\}$ and $F'_0=F'\setminus\{f_*\}$.
\begin{figure}[b!]
    \centering
    \includegraphics[width=\linewidth]{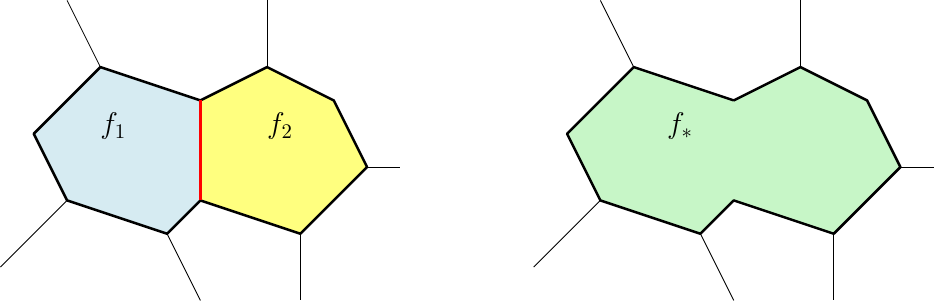}
    \caption{\small A portion of $\Gbb$ (on the left) and $\Gbb'$ (on the right). The faces $f_1$ and $f_2$ of $\Gbb$ are separated by the edge $e$ (in red). After removal of $e$, $f_1$ and $f_2$ are replaced by $f_*$ (on the right). All other faces and edges remain unchanged.}
    \label{fig:face_split}
\end{figure}
Let $\Phi:G^{E'}\to\R$ be a bounded measurable function. It corresponds to a function on $G^{E'}\sim G^{\vert E'\vert}$ invariant by the action of $G^{V'}$, that we shall also denote by $\Phi$. According to the Driver--Sengupta formula,
\[
\int_{G^{E'}} \Phi(\omega) \mathcal{R}_*\tilde{\mu}_{\Gbb,G,\Sigma,\vol}(d\omega) = \int_{G^E} \Phi\circ\mathcal{R}(\omega)\prod_{f\in F} p_{\vol(f)}(h_{\partial f})d\omega.
\]
Let us label arbitrarily the edges of $E'$ by $(e_1,\ldots,e_k)$, where $k=\vert E\vert$, so that $E=\{e_1,\ldots,e_k,e\}$. We get
\[
\int_{G^E} \Phi\circ\mathcal{R}(\omega) \tilde{\mu}_{\Gbb,G,\Sigma,\vol}(d\omega)=\int_{G^{k+1}}\Phi(e_1,\ldots,e_k)\prod_{f\in F}p_{\vol(f)}(h_{\partial f})de_1\ldots de_k de.
\]
Set
\[
P(e_1,\ldots,e_k)=\Phi(e_1,\ldots, e_k)\prod_{f\in F_0}p_{\vol(f)}(h_{\partial f}),
\]
and $t_1=\vol(f_1)$, $t_2=\vol(f_2)$. On the one hand,
\begin{equation}\label{eq:int_Cgg_1}
\int_{G^E} \Phi\circ\mathcal{R}(\omega) \tilde{\mu}_{\Gbb,G,\Sigma,\vol}(d\omega)=\int_{G^{k+1}}P(e_1,\ldots,e_k)p_{t_1}(h_{\partial f_1})p_{t_2}(h_{\partial f_2})de_1\ldots de_k de,
\end{equation}
and on the other hand
\begin{equation}\label{eq:int_Cgg_2}
\int_{G^{E'}} \Phi(\omega) \tilde{\mu}_{\Gbb',G,\Sigma,\vol}(d\omega)=\int_{G^{k}}P(e_1,\ldots,e_k)p_{t_1+t_2}(h_{\partial f*})de_1\ldots de_k.
\end{equation}
There are words $\alpha,\beta$ in the edges of $\Gbb'$ such that $\partial f_1=e\alpha$, $\partial f_2=\beta e^{-1}$ and $\partial f_*=\beta\alpha$. We deduce that $\partial f_*=\partial f_2\partial f_1$.

Let us integrate the RHS of~\eqref{eq:int_Cgg_1} with respect to $e$ and use the semigroup property of the heat kernel:
\begin{align*}
\int_{G^E} \Phi\circ\mathcal{R}(\omega) \tilde{\mu}_{\Gbb,G,\Sigma,\vol}(d\omega)= &\int_{G^k}P(e_1,\ldots,e_k)p_{t_1+t_2}(\partial f_2\partial f_1)p_{t_1}(e\alpha)de_1\ldots de_k\\
= & \int_{G^{E'}}\Phi(\omega) \tilde{\mu}_{\Gbb',G,\Sigma,\vol}(d\omega),
\end{align*}
which shows the invariance of the unnormalized measures.

{\bf Step 2.} The partition function $Z_{\Gbb,G}(g,t)$ is the integral of the unnormalized measure $\tilde{\mu}_{\Gbb,G,\Sigma,\vol}$, which is invariant by subdivision as previously shown. If $g\geq 1$, we can obtain~\eqref{eq:PartitionFunction2} by applying the Driver--Sengupta formula to a map of genus $g$ with one face. We can take as a map a $4g$-gon with boundary $[a_1,b_1]\ldots[a_g,b_g]$. Finally, if $g=0$, we apply the Driver--Sengupta formula to a planar map with two faces separated by a simple loop.
\end{proof}

\subsection{The group of loops}\label{sec:loop_group}

The main observables of Yang--Mills theory are \emph{Wilson loops} $W_\ell^\chi = \chi(h_\ell)$, where $\ell$ is a loop traced in $\Gbb$ and $\chi:G\to\C$ is a central function on $G$, that is
\[
\chi(k^{-1}gk)=\chi(g),\quad \forall g,k\in G.
\]
In practice, we shall only consider the following Wilson loops:
\[
W_\ell = \tr(h_\ell),
\]
where $\tr=\frac1N\Tr$ is the normalized trace on $\mathcal{M}_N(\C)$, because they form a dense generating family in the algebra of Wilson loops when $G$ is a compact classical group, according to a theorem of L\'evy \cite{Lev04}. They constitute random variables, whose expectation is either called \emph{Wilson loop expectation}, or \emph{vacuum expectation value} (VEV). A first remark is that Wilson loop expectations vanish for open paths.

\begin{proposition}
Let $\gamma$ be an open path in $\Gbb$. Then $\E[W_\gamma]=0$.
\end{proposition}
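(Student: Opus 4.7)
The plan is to exploit gauge invariance of the Yang--Mills measure. Since $\gamma$ is open we have $\underline\gamma\neq\overline\gamma$, so we can choose a gauge transformation $j:V\to G$ localized at a single endpoint: set $j(\underline\gamma)=k$ for an arbitrary $k\in G$ and $j(v)=1_G$ at every other vertex (in particular at $\overline\gamma$). A short computation from the definition of the gauge action on edges shows that the holonomy along any path transforms as $j\cdot h_\gamma=j(\overline\gamma)^{-1} h_\gamma j(\underline\gamma)$, and with this choice of $j$ the transformation reduces to $j\cdot h_\gamma=h_\gamma k$. Since the Driver--Sengupta measure descends to the configuration space $\Cgg=G^E/G^V$, it is invariant under this action, which yields
\[
\E[\tr(h_\gamma)]=\E[\tr(h_\gamma k)]\qquad\text{for every }k\in G.
\]

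Next, integrate both sides against the Haar probability measure $dk$ on $G$; Fubini together with linearity of the trace gives
\[
\E[\tr(h_\gamma)]=\E\!\left[\tr\!\left(h_\gamma\int_G k\,dk\right)\right].
\]
It then suffices to prove that the $N\times N$ matrix $M:=\int_G k\,dk$ is zero. By left-invariance of the Haar measure $gM=M$ for every $g\in G$, so each column of $M$ is a $G$-invariant vector for the defining representation of $G$. For the compact classical groups we care about -- in particular $G=\U(N)$ -- this representation is non-trivial and irreducible, so admits no nonzero invariant vector (equivalently, this is Schur orthogonality applied to the matrix coefficients of the defining representation against the trivial one). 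Hence $M=0$ and $\E[W_\gamma]=0$.

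The argument is completely elementary and its only genuine ingredient is gauge invariance, so there is no real obstacle. The one subtle point worth highlighting is that openness of $\gamma$ is used in an essential way: if $\gamma$ were a loop, then $\underline\gamma=\overline\gamma$ and the two $j$-factors would combine to produce conjugation of $h_\gamma$ by $k$, so averaging over $k$ would only project $\tr(h_\gamma)$ onto central functions, which does not force it to vanish.
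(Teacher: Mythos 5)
Your proof is correct and is exactly the argument the paper has in mind: its own proof consists of the single line ``it is a straightforward consequence of gauge invariance,'' and your write-up (localizing the gauge transformation at one endpoint, averaging over $k$, and using that $\int_G k\,dk=0$ for the nontrivial irreducible defining representation) is the standard way to make that one-liner precise. Your closing remark correctly identifies where openness of $\gamma$ enters and why the argument degenerates to conjugation-invariance for loops.
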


\begin{proof}
It is a straightforward consequence of gauge invariance.
\end{proof}

According to the previous proposition, it is sufficient to focus on Wilson loops for loops in the graph. We will see what structures can be put on the set of loops in the graph $\Gbb$. A first remark is that one can endow $\mathrm{P}(\Gbb)$ with an operation of multiplication, given by concatenation of non-empty words: if $\gamma=e_1\ldots e_m$ and $\gamma_2=e'_1\ldots e'_n$ and if $\overline{e_m}=\underline{e'_1}$, then $\gamma_1\cdot\gamma_2=\gamma_1\gamma_2=e_1\ldots e_m e'_1\ldots e'_n$. One can also extend the concatenation to constant words: $\gamma\cdot\overline{\gamma}=\underline{\gamma}\cdot\gamma = \gamma$ by convention. For any path $\gamma=e_1\cdots e_n\in\mathrm{P}(\Gbb)$, we define the reverse path $\gamma^{-1}=e_n^{-1}\cdots e_1^{-1}$. Even if we restrict ourselves to loops, there is no satisfying algebraic structure: $\mathrm{L}(\Gbb)$ is not stable by multiplication, and there is no neutral element for the inversion. A simple way to overcome the first issue is to fix a common basepoint for all loops. It might seem to be restrictive, but we will see in a moment that it is not, because of the way we deal with the second issue.

In order to get a neutral element, we put an equivalence class on loops as follows: $\ell_1$ and $\ell_2\in\mathrm{L}(\Gbb)$ are in the \emph{same reduction class} if one can go from one to the other by a finite number of insertion and removals of edges. More concretely,
\[
e_1\ldots e_i \tilde{e}\tilde{e}^{-1}e_{i+1}\ldots e_n \sim e_1\ldots e_i e_{i+1}\ldots e_n.
\]
Two equivalent loops in a triangulation of a surface of genus 2 are shown in Figure~\ref{fig:equiv_loops} (sides of the polygons are glued pairwise according to their labels in order to get a genus 2 surface).
\begin{figure}[t!]
    \centering
    \includegraphics[width=\linewidth]{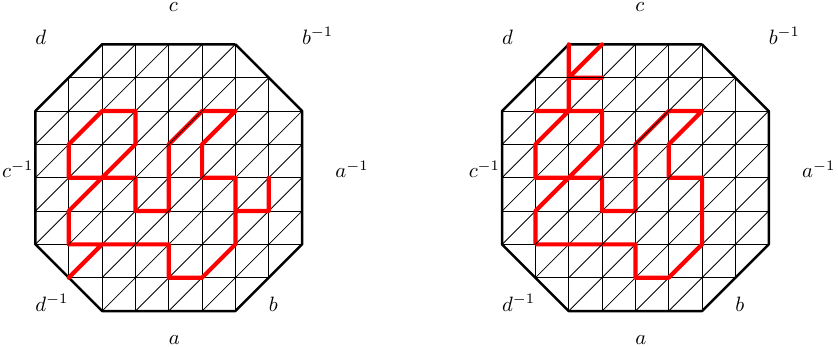}
    \caption{\small The loop on the left and on the right are in the same reduction class.}
    \label{fig:equiv_loops}
\end{figure}
This equivalence relation is related to the tree-like equivalence sometimes found in the literature \cite{GP,HL}. We shall denote by $\mathrm{RL}_v(\Gbb)=\mathrm{L}_v(\Gbb)/\sim$ the set of \emph{reduced loops}. The following result is simple, yet quite useful.

\begin{proposition}
For all $v\in V$,  $\mathrm{RL}_v(\Gbb)$ is a group.
\end{proposition}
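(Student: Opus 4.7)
The plan is to verify the three group axioms on $\mathrm{RL}_v(\Gbb)$, endowed with the operation induced by concatenation of loops based at $v$, after first checking that this operation descends to the quotient.

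First, I would show that concatenation is well-defined on reduction classes. Let $\ell_1,\ell'_1,\ell_2,\ell'_2 \in \mathrm{L}_v(\Gbb)$ with $\ell_1\sim\ell'_1$ and $\ell_2\sim\ell'_2$. By definition, one passes from $\ell_1$ to $\ell'_1$ (resp.\ $\ell_2$ to $\ell'_2$) by a finite sequence of elementary insertions/removals of patterns $\tilde{e}\tilde{e}^{-1}$ somewhere in the word. Performing the same sequence inside the first (resp.\ second) block of the concatenated word $\ell_1\ell_2$ yields a finite sequence of elementary moves taking $\ell_1\ell_2$ to $\ell'_1\ell'_2$, so $\ell_1\ell_2\sim\ell'_1\ell'_2$. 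Hence the map $([\ell_1],[\ell_2])\mapsto[\ell_1\ell_2]$ is well-defined on $\mathrm{RL}_v(\Gbb)\times\mathrm{RL}_v(\Gbb)$. Associativity is automatic because concatenation of words is associative before passing to the quotient.

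Next I would exhibit the neutral element. The class of the constant loop $[v]$ at $v$ satisfies $[v]\cdot[\ell]=[\ell]=[\ell]\cdot[v]$ by the convention $v\cdot\ell=\ell\cdot v=\ell$ already adopted in the text, so $[v]$ is the identity of $\mathrm{RL}_v(\Gbb)$.

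The only nontrivial point is the existence of inverses. Given $\ell=e_1\cdots e_n\in \mathrm{L}_v(\Gbb)$, I claim $[\ell]^{-1}=[\ell^{-1}]$, where $\ell^{-1}=e_n^{-1}\cdots e_1^{-1}$. The natural argument is by induction on $n=\ell(\ell)$: for $n=0$ we have $\ell=v=\ell^{-1}$ and there is nothing to do. For $n\geq 1$, write
\[
\ell\cdot\ell^{-1}=e_1\cdots e_{n-1}\,e_n e_n^{-1}\,e_{n-1}^{-1}\cdots e_1^{-1}.
\]
Applying one elementary reduction (removal of the pattern $e_ne_n^{-1}$) gives $\ell\cdot\ell^{-1}\sim \ell_0\cdot \ell_0^{-1}$ with $\ell_0=e_1\cdots e_{n-1}$ of length $n-1$, and the induction hypothesis yields $\ell_0\cdot \ell_0^{-1}\sim v$. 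Hence $[\ell\cdot\ell^{-1}]=[v]$, and the symmetric computation gives $[\ell^{-1}\cdot\ell]=[v]$.

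The main (very mild) obstacle is purely bookkeeping: one must be careful that the elementary moves $\ell\sim\ell'$ remain valid when performed \emph{inside} a larger word, i.e.\ that insertion/removal of $\tilde e\tilde e^{-1}$ is a congruence for concatenation. Once this is checked, the three axioms fall out, as above. Note that basepoint-preservation holds throughout because every move leaves the endpoints of the word unchanged, so each step of the argument stays inside $\mathrm{L}_v(\Gbb)$.
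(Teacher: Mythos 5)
Your proof is correct and is precisely the standard argument the paper has in mind when it calls this result ``simple'' and omits the proof: check that insertion/removal of $\tilde e\tilde e^{-1}$ is a congruence for concatenation, take the constant loop as identity, and reduce $\ell\ell^{-1}$ to the constant loop by successive cancellations. The only cosmetic point is that in the induction step $\ell_0=e_1\cdots e_{n-1}$ is generally an open path rather than a loop based at $v$, so the induction is cleaner if stated as: for every path $\gamma$ issued from $v$, the loop $\gamma\gamma^{-1}$ reduces to the constant loop at $v$ --- the chain of intermediate words $e_1\cdots e_k e_k^{-1}\cdots e_1^{-1}$ consists of loops based at $v$ throughout, so the argument goes through verbatim.
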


We will state a few important results about this group, that come from \cite{Lev10}. But first, let us explain why fixing a basepoint is not a big deal for our purpose. Fix a map $\Gbb=(V,E,F)$ of any genus, and consider a marked vertex $v\in V$. If we want to compute the Wilson loop expectation of a loop $\ell$ that does not go through $v$, here is what one can do. Let $v_0$ the initial basepoint of $\ell$, and choose an oriented path $c$ in $\Gbb$ going from $v$ to $v_0$. Then the loop $\ell_v=c\ell c^{-1}$ has basepoint $v$ and is in the same reduction class as $\ell$. See Figure~\ref{fig:lasso} for an example.
\begin{figure}[b!]
    \centering
    \includegraphics[width=0.8\linewidth]{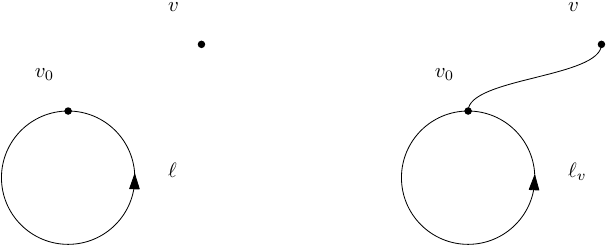}
    \caption{Construction of a loop $\ell_v$ in the same reduction class as $\ell$ but based at $v$.}
    \label{fig:lasso}
\end{figure}
In the case where $\ell=\partial f$ is the boundary of a face $f\in F$, then the loop $\ell_v$ is called \emph{lasso} of base $v$ associated to the face $f$. A priori, there are many possible $\ell_v$ for a given $\ell$, but if we fix a spanning tree $T$ of $\Gbb$ (i.e. a connected subgraph without cycles that goes through all vertices of $\Gbb$), there is a unique way to take a path from $v$ to the boundary of $f$ through $T$. Lassos are one of the main tools to prove the following two results.

\begin{lemma}[\cite{Lev10}, Lemma 1.3.33]
For any map $\Gbb=(V,E,F)$ of genus $g\geq 0$ and for any $v\in V$, the group $\mathrm{RL}_v(\Gbb)$ is free of rank $\vert E\vert - \vert V\vert +1=2g-1+\vert F\vert$.
\end{lemma}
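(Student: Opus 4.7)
The plan is to follow the classical argument that the fundamental group of a graph (which is essentially what $\mathrm{RL}_v(\Gbb)$ is) is a free group whose rank equals the first Betti number of the graph, and then to reconcile the two expressions for the rank using Euler's formula.

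First I would fix a spanning tree $T$ of the 1-skeleton of $\Gbb$. Since $\Gbb$ is connected and $T$ is a tree covering all of $V$, the tree $T$ has exactly $|V|-1$ edges, so the complementary set $E \setminus T$ (counted without $\pm 1$ orientation) has cardinality $|E|-|V|+1$. For every vertex $w \in V$ let $[v \to w]_T$ denote the unique reduced path in $T$ from $v$ to $w$; for each unoriented edge $e \in E \setminus T$ choose an orientation and define the \emph{tree-based lasso}
\[
\ell_e = [v \to \underline{e}]_T \cdot e \cdot [\overline{e} \to v]_T \in \mathrm{L}_v(\Gbb).
\]
I claim that $\mathrm{RL}_v(\Gbb)$ is freely generated by the classes $[\ell_e]$, for $e$ ranging over $E \setminus T$.

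For the generation step, take any loop $\ell = e_1 \cdots e_n$ based at $v$. Insert, between $e_i$ and $e_{i+1}$, the reduced $T$-path from $\overline{e_i}$ back to $v$ followed by its inverse; since these are insertions of an edge and its inverse (iterated), this yields a loop in the same reduction class as $\ell$. After this surgery, $\ell$ becomes a concatenation of lassos $[v\to \underline{e_i}]_T \cdot e_i \cdot [\overline{e_i} \to v]_T$. If $e_i \in T$, the corresponding factor reduces to the trivial loop at $v$ (an out-and-back trip along a tree path). The remaining factors are precisely the $\ell_{e_i}^{\pm 1}$ for $e_i \in E \setminus T$. Thus $[\ell]$ lies in the subgroup generated by $\{[\ell_e] : e \in E\setminus T\}$.

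The main difficulty is freeness, i.e.\ showing that no nontrivial reduced word $\ell_{e_{i_1}}^{\varepsilon_1}\cdots \ell_{e_{i_k}}^{\varepsilon_k}$ (with the usual constraint $(e_{i_j},\varepsilon_j)\neq (e_{i_{j+1}},-\varepsilon_{j+1})$) reduces to the trivial loop. The natural argument is that reduction is a confluent rewriting system (this is classical; one can prove it via a diamond/Newman-style lemma, or simply by noting that two paths in the same reduction class have the same unique fully reduced representative). Assuming confluence, I would compute the fully reduced form of the word above: inside each $\ell_{e_{i_j}}^{\varepsilon_j}$ the tree part $[v\to \underline{e_{i_j}}]_T \cdot e_{i_j}^{\varepsilon_j}\cdot[\overline{e_{i_j}}\to v]_T$ is already reduced (tree paths are reduced by uniqueness), and between consecutive factors one reduces only the tree segment $[\overline{e_{i_j}^{\varepsilon_j}}\to v]_T\cdot[v \to \underline{e_{i_{j+1}}^{\varepsilon_{j+1}}}]_T$ to the unique reduced tree path between those two endpoints. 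Crucially, this leaves all the \emph{non-tree} letters $e_{i_j}^{\varepsilon_j}$ untouched in the reduced word. Because the original word was reduced in the free group on $E\setminus T$, these non-tree letters cannot cancel among themselves; hence the final reduced loop contains the letter $e_{i_1}^{\varepsilon_1}$ and is therefore not the empty loop.

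Finally, I would translate the rank into the genus-dependent form by invoking Euler's formula~\eqref{eq:Euler}: $|V|-|E|+|F|=2-2g$ yields $|E|-|V|+1 = |F|+2g-1$, which completes the proof.
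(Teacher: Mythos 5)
The paper does not prove this lemma: it is quoted from L\'evy's monograph, and the text only remarks that lassos are the main tool in the proof. Your argument --- spanning tree, tree-based lassos $\ell_e$ for $e\in E\setminus T$ as generators, and freeness via uniqueness of the fully reduced representative so that the non-tree letters survive reduction --- is precisely that lasso-based strategy and is correct, including the correct identification of the one nontrivial ingredient (confluence of the reduction relation) and the translation of the rank via Euler's formula.
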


\begin{proof}
Let us take a spanning tree $T=(V,E_T)$ of $\Gbb$. For any $e\in E\setminus E_T$, let $\ell_e=[v,\underline{e}]e[\overline{e},v]$ be a loop obtained by taking a path $[v,\underline{e}]$ from $v$ to the starting point of $e$ through $T$, then going through $e$, and taking a path $[\overline{e},v]$ from the endpoint of $e$ to $v$ through $T$ only. By construction, for any $e\in E\setminus E_T$ the loop $\ell_e$ is uniquely defined and is a lasso. It is not hard to see that $(\ell_e,e\in E\setminus E_T)$ generate $\mathrm{RL}_v(\Gbb)$, since any loop $\ell=e_1\ldots e_n$ is in the same equivalence class as $\ell_{e_1}\ldots \ell_{e_n}$, and one can also check that it is free. Given that $\vert E\setminus E_T\vert=\vert E\vert-(\vert V\vert -1)$, we get that $\mathrm{RL}_v(\Gbb)$ has rank $\vert E\vert-\vert V\vert+1$ as expected.
\end{proof}

\begin{proposition}[\cite{Lev10}, Prop. 2.4.2]\label{prop:tame_generators}
Let $\Gbb=(V,E,F)$ be a map or genus $g$ and $v\in V$ be a fixed vertex. Set $k=\vert F\vert$. Then:
\begin{enumerate}
\item The group $\mathrm{RL}_v(\Gbb)$ has the following presentation
\[
\langle a_1,b_1,\ldots,a_g,b_g,c_1,\ldots,c_k\vert [a_1,b_1]\ldots[a_g,b_g]=c_1\ldots c_k\rangle,
\]
where the homotopy classes of $(a_1,b_1,\ldots,a_g,b_g)$ generate the fundamental group of $\Sigma$, and $c_1,\ldots,c_k$ are lassos enclosing the faces of $\Gbb$.
\item For any bounded measurable function $f:G^{2g+k}\to\C$,
\begin{align*}
Z_G(g,t)&\int_{G^{2g+k}}f(H_{a_1},H_{b_1},\ldots,H_{a_g},H_{b_g},H_{c_1},\ldots,H_{c_k})\mu_{\Gbb,G,\Sigma,\vol}(d\omega)\\
= & \int_{G^{2g+k-1}}f(x_1,y_1,\ldots,x_g,y_g,z_1,\ldots,z_k)\prod_{i=1}^kp_{\vol(f_i)}(z_i)\prod_{i=1}^gdx_idy_i\prod_{j=1}^{k-1}dz_j,
\end{align*}
by setting $z_k=z_{k-1}^{-1}\ldots z_1^{-1}[x_1,y_1]\ldots[x_g,y_g]$, and with the convention that $H_\ell=H_\ell(\omega)$ as a random variable for any loop $\ell$.
\end{enumerate}
\end{proposition}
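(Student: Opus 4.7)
The plan is to reduce both claims to an explicit computation on a canonical map adapted to the presentation, then transport the result to an arbitrary $\Gbb$ via the invariance of the Yang--Mills measure under subdivision established in the previous theorem. The crux is that on a suitably chosen map with a single vertex, the edges themselves are the generators $a_i, b_i, c_j$ of $\mathrm{RL}_v$, and Driver--Sengupta then produces the integration formula directly.

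I would first construct a canonical map $\Gbb^\star$ by taking the fundamental polygon of $\Sigma$ (one vertex $v$, loops $a_1, b_1, \ldots, a_g, b_g$, single face with counterclockwise boundary $[a_1, b_1] \cdots [a_g, b_g]$) and subdividing its interior by $k - 1$ additional simple loops $c_1, \ldots, c_{k-1}$ based at $v$, arranged so that each $c_j$ bounds counterclockwise a face $f_j$. Euler's formula $1 - (2g + k - 1) + k = 2 - 2g$ confirms consistency, and the remaining face $f_k$ has counterclockwise boundary $c_k := c_{k-1}^{-1} \cdots c_1^{-1} [a_1, b_1] \cdots [a_g, b_g]$ by direct reading. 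For part (1), Lemma 1.3.33 gives $\mathrm{RL}_v(\Gbb^\star)$ free of rank $2g + k - 1$ with basis the edges. The abstract group $\langle a_i, b_i, c_j \mid c_1 \cdots c_k = [a_1, b_1] \cdots [a_g, b_g] \rangle$ is itself free of the same rank (solve for $c_k$), so the canonical surjection onto $\mathrm{RL}_v(\Gbb^\star)$ is an isomorphism between two finitely-generated free groups of equal rank. The presentation for a general map then follows by transport through subdivisions.

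For part (2), I would apply Driver--Sengupta on $\Gbb^\star$: parametrize $G^{E^\star} \simeq G^{2g+k-1}$ by $(x_i, y_i, z_j)_{i \leq g, \, j \leq k-1}$ via $x_i = \omega(a_i)$, $y_i = \omega(b_i)$, $z_j = \omega(c_j)$. By construction, $H_{a_i} = x_i$, $H_{b_i} = y_i$, $H_{c_j} = z_j$ for $j < k$, and $H_{c_k} = z_k := z_{k-1}^{-1} \cdots z_1^{-1} [x_1, y_1] \cdots [x_g, y_g]$ is forced by $\partial f_k$. The Driver--Sengupta formula then reads
\[
\mu_{\Gbb^\star, G, \Sigma, \vol}(d\omega) = \frac{1}{Z_G(g, t)} \prod_{i=1}^k p_{\vol(f_i)}(z_i) \prod_{i=1}^g dx_i\, dy_i \prod_{j=1}^{k-1} dz_j,
\]
which is exactly the claimed identity after integration against a bounded measurable $f$. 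For an arbitrary map $\Gbb$, invariance under subdivision extends the formula: the test function depends only on reduced-loop holonomies, which are unchanged by adding or removing edges.

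The main obstacle is the topological bookkeeping for the boundary of $f_k$ in $\Gbb^\star$: one must arrange the $c_j$ so that their counterclockwise boundaries are literally $c_j$ (rather than a cyclic permutation or a conjugate) and so that the residual face $f_k$ has counterclockwise boundary exactly $c_{k-1}^{-1} \cdots c_1^{-1} [a_1, b_1] \cdots [a_g, b_g]$. Arranging the $c_j$ in a wedge configuration near $v$ inside the polygon makes the reading unambiguous, but this verification is delicate and prone to sign or ordering errors. A secondary subtlety is ensuring that every choice of generators satisfying the stated geometric properties on a general map yields the same presentation and the same integration formula, which follows from the Hopfian/rank argument applied to the reduced loop group of the arbitrary map together with the preservation of holonomies under reduction.
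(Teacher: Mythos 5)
The paper does not prove this proposition; it is quoted verbatim from L\'evy's monograph (Prop.\ 2.4.2 there), so your attempt can only be measured against the standard argument, which works directly on $\Gbb$ via a spanning tree and the associated lassos. Your overall strategy --- compute everything on a one-vertex map adapted to the presentation, then transport --- is a legitimate alternative, and the Hopfian/rank argument for part (1) on the canonical map $\Gbb^\star$ is correct: a surjection between finitely generated free groups of equal rank $2g+k-1$ is an isomorphism, and the rank of $\mathrm{RL}_v$ is supplied by Lemma~1.3.33.

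There are, however, two genuine gaps. First, for an \emph{arbitrary} map the Hopfian argument only converts surjectivity into bijectivity; it does not give you surjectivity. You must still prove that the chosen $a_i,b_i$ together with the lassos $c_1,\dots,c_k$ (built from a spanning tree $T$) actually generate $\mathrm{RL}_v(\Gbb)$, i.e.\ that every reduced loop is a word in these elements. This is the combinatorial heart of L\'evy's proof (every loop decomposes into a product of lassos and tree paths), and ``transport through subdivisions'' does not supply it: a general $\Gbb$ is not a subdivision of $\Gbb^\star$, and a common refinement has a strictly larger reduced loop group with more generators, so the generation statement for $\Gbb$ itself does not follow formally. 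Second, for part (2) the right-hand side involves the areas $\vol(f_1),\dots,\vol(f_k)$ of the faces of $\Gbb$, so transporting the identity from $\Gbb^\star$ requires an area-preserving correspondence between the faces of $\Gbb^\star$ and those of $\Gbb$ (or a common refinement compatible with both face decompositions); invariance under subdivision alone only controls the joint law of holonomies of loops already drawn in both maps, and your test function is evaluated on lassos around the faces of $\Gbb$, not of $\Gbb^\star$. The cleaner route, and the one taken in the source, is to perform the change of variables $\omega\mapsto(H_{a_i},H_{b_i},H_{c_j})$ directly on $\Gbb$ using the spanning tree (gauge-fixing the tree edges to $1$), which yields the product density in one step. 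Your wedge-configuration bookkeeping for $\partial f_k$ is indeed delicate but is not the main obstruction.
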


Proposition~\ref{prop:tame_generators} yields another interpretation of the discrete Yang--Mills measure: it is a measure on the \emph{representation space}
\[
\Hom(\mathrm{RL}_v(\Gbb),G)
\]
which is invariant by the simultaneous adjoint action of $G$ on all generators. It descends to a measure on
\[
\Hom(\mathrm{RL}_v(\Gbb),G)/G,
\]
which is actually the same space as the \emph{character variety}
\[
\chi(\Sigma_F,G)=\Hom(\pi_1(\Sigma_F),G)/G,
\]
for the surface $\Sigma_F$ obtained from $\Sigma$ by removing one point from the interior of each face $f\in F$. Indeed, the groups $\mathrm{RL}_v(\Gbb)$ and $\pi_1(\Sigma_F)$ have the same presentation, therefore they are isomorphic. The character variety $\chi(\Sigma_F,G)$ has a rich structure that we will not develop here; we refer to \cite{Lab} for an introduction.

\subsection{Continuous extension}\label{sec:continuous_extension}

The property of invariance by subdivision satisfied by the discrete Yang--Mills theory enables to define it for graphs that are finer and finer, and one could expect to extend it into a ``continuous" measure by taking a projective limit of graphs. The underlying argument is the same as Kolmogorov's extension theorem to prove the existence of the Brownian motion from its finite-dimensional marginals. The moral is that one does not lose any generality to use the discrete Yang--Mills measure, as long as we consider observables associated to fixed loops: under loose regularity assumptions, they can always be drawn in some map and all computations can be done in the discrete setting.

To define the continuous limiting object, we need to choose a correct space of paths in the surface that contains at least all paths that can be traced in maps.

\begin{definition}
Let $\Sigma$ be a smooth compact surface. A \emph{parametrized path} on $\Sigma$ is a path $\gamma:[0,1]\to\Sigma$ that is either constant, or Lipschitz-continuous, with a speed bounded from below by a positive constant. A \emph{path} on $\Sigma$ is an equivalence class of parametrized paths on $\Sigma$ for the equivalence relation
\[
\gamma\sim\gamma'\Leftrightarrow \exists \Phi:[0,1]\to[0,1] \text{ bi-Lipschitz increasing homeo. s.t. } \gamma = \gamma'\circ\Phi.
\]
We denote by $\mathrm{P}(\Sigma)$ the set of paths in $\Sigma$, and $\mathrm{L}(\Sigma)$ the set of \emph{loops} in $\Sigma$, i.e. closed paths.
\end{definition}

The reason why we require non-constant paths to be Lipschitz-continuous with speed bounded from below is that it enables the choice of a parametrization with constant speed. The main result of this subsection, due to L\'evy \cite{Lev10}, is the following.

\begin{theorem}\label{thm:extension_Levy}
Let $\Sigma$ be a compact surface endowed with a smooth measure of area $\vol$, and $G$ a compact Lie group whose Lie algebra is endowed with an invariant inner product. There exists a family of $G$-valued random variables $(H_\ell)_{\ell\in\mathrm{L}(\Sigma)}$ such that:
\begin{enumerate}
\item For any map $\Gbb=(V,E,F)$ embedded in $\Sigma$ and any fixed $v\in V$, the distribution of $(H_\ell)_{\ell\in\mathrm{RL}_v(\Gbb)}$ is given by Proposition~\ref{prop:tame_generators}~(ii).
\item For any family $(\ell_n)_{n\geq 1}$ of loops that converges\footnote{In the sense of uniform convergence of paths with their constant speed parametrization.} to a loop $\ell$, then the sequence of random variables $(H_{\ell_n})_{n\geq 1}$ converges in probability to $\ell$.
\end{enumerate}
The \emph{continuous Yang--Mills measure} on $\Sigma$ is the distribution of $(H_\ell)_{\ell\in\mathrm{L}(\Sigma)}$.
\end{theorem}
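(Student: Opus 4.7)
The plan is to construct the family $(H_\ell)_{\ell \in \mathrm{L}(\Sigma)}$ in two stages: first on the countable set of loops that can be drawn in some finite map embedded in $\Sigma$, via a Kolmogorov-type extension fuelled by the subdivision invariance of the discrete measure; then on general Lipschitz loops, by an $L^2$-approximation argument resting on a continuity estimate for holonomies. This is the strategy of L\'evy \cite{Lev10}, which we sketch here.

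First, I would define, for every finite map $\Gbb = (V,E,F)$ embedded in $\Sigma$ and every basepoint $v \in V$, a probability measure $\mathbb{P}_{\Gbb,v}$ on $G^{2g+k-1}$ via the explicit density provided by Proposition~\ref{prop:tame_generators}(ii). This prescribes the joint law of the holonomies along the tame generators of $\mathrm{RL}_v(\Gbb)$, and by multiplicativity it determines the law of $(H_\ell)_{\ell \in \mathrm{RL}_v(\Gbb)}$ entirely. The subdivision-invariance theorem already proved shows that if $\Gbb'$ refines $\Gbb$, then the pushforward of $\mathbb{P}_{\Gbb',v}$ under the restriction of holonomies to loops of $\Gbb$ coincides with $\mathbb{P}_{\Gbb,v}$; and a change of basepoint can be absorbed by conjugation along a fixed path in a spanning tree. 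Since any two embedded maps admit a common refinement, the system $\{\mathbb{P}_{\Gbb,v}\}$ is projective. Applying Kolmogorov's extension theorem on the compact Polish group $G^\Lambda$, where $\Lambda$ is the countable set of loops that lie in the skeleton of some embedded map, yields a probability space on which one obtains variables $(H_\ell)_{\ell \in \Lambda}$ satisfying (i).

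The main obstacle is promoting this family to arbitrary Lipschitz loops, and for that one needs a uniform continuity estimate. The target estimate, which is the technical heart of L\'evy's construction, has the following shape: there is a nondecreasing function $\Phi$ with $\Phi(s) \to 0$ as $s \to 0^+$ such that for any two loops $\ell_1, \ell_2 \in \Lambda$ with a common basepoint, of length at most $L$, and differing by a thin cobordism of area at most $s$,
\begin{equation*}
\E\bigl[d(H_{\ell_1}, H_{\ell_2})^2\bigr] \leq C_L \,\Phi(s),
\end{equation*}
where $d$ is a bi-invariant distance on $G$. I would prove this by working in a local chart where the surface is essentially planar, choosing a common refinement $\Gbb$ in which $\ell_1$ and $\ell_2$ differ only over a small collection of faces of total area $s$, and then exploiting the explicit Driver--Sengupta density and heat-kernel estimates of the form $p_t(g) \sim p_t(1)(1 + O(d(g,1)^2/t))$ to control the conditional law of the discrepancy holonomies in a neighborhood of $1_G$. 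Gauge invariance (the fact that Wilson loop expectations vanish on open paths) will be key to decoupling the ``common'' parts of $\ell_1$ and $\ell_2$ from the small discrepancy piece.

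Granted such an estimate, the conclusion is routine: for any Lipschitz loop $\ell$, choose a sequence $(\ell_n) \subset \Lambda$ converging to $\ell$ uniformly at constant speed (piecewise-linear approximations in a triangulation work); the estimate makes $(H_{\ell_n})$ Cauchy in $L^2$, hence convergent in probability, and the limit $H_\ell$ is independent of the approximating sequence. Property (ii) is then an immediate reformulation of the same estimate applied to any approximating sequence of loops converging to $\ell$. I expect the delicate points to be (a) the precise form of the continuity estimate, which demands a careful use of heat-kernel short-time asymptotics uniformly in the group elements, and (b) the bookkeeping needed to realize two close loops as reduced loops in a single common map with shared generators, so that the estimate can be applied without gauge-related pathologies.
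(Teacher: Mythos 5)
Your overall strategy --- a projective system of discrete measures glued together by subdivision invariance and Kolmogorov extension, followed by an $L^2$-continuity estimate for holonomies that lets you pass to general Lipschitz loops --- is exactly the one the paper attributes to L\'evy and sketches in the remarks following the theorem statement (the paper itself omits the proof entirely). So in spirit you are on the intended path, and your second stage, including the shape of the continuity estimate and the role of gauge invariance, matches the standard argument.

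There is, however, one genuine gap in your first stage: the assertion that ``any two embedded maps admit a common refinement'' is false for arbitrary topological maps. Two edges that are merely Lipschitz (or even smooth) curves can intersect in an infinite set, so the union of the skeletons of two embedded graphs need not be the skeleton of any finite graph, and the system $\{\mathbb{P}_{\Gbb,v}\}$ you describe is not actually directed. This is precisely the point of the paper's first remark: one must first fix a Riemannian metric on $\Sigma$ and restrict to maps whose edges are piecewise geodesic, for which common refinements do exist (two distinct geodesic arcs meet in finitely many points or subarcs), and only then does the projective limit make sense. The apparent dependence on the chosen metric is then removed by the invariance of the discrete measures under area-preserving diffeomorphisms. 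With that correction, your approximation of a general Lipschitz loop by piecewise geodesic loops is the right continuation, and is in fact the reason the regularity class of loops in the theorem is chosen as it is. (A minor additional slip: the index set $\Lambda$ of loops traced in some embedded map is not countable, but this is harmless since Kolmogorov's extension theorem applies to arbitrary index sets with the product $\sigma$-algebra.)
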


The proof of this result relies on many technical developments in measure theory and Riemannian geometry, that we will skip here. Instead, we will rather make a few remarks:

\begin{itemize}
\item The right point of view to get a projective system of measures is to consider graphs whose edges are embedded into piecewise geodesic paths, which requires fixing a Riemannian metric on $\Sigma$.
\item However, one can show that all discrete Yang--Mills measures are invariant by area-preserving diffeomorphisms, therefore they do not depend on the choice of the Riemannian metric.
\item The choice of regularity of the loops comes from the fact that they can be obtained as limits of piecewise geodesic loops. It is also true for rectifiable paths, which are continuous with finite length. Note that having finite length does not depend on the choice of a metric, which justifies point (2) in Theorem~\ref{thm:extension_Levy}.
\end{itemize}

To conclude this section, we must clarify an important subtlety regarding the continuous Yang--Mills measure. In the construction of the discrete Yang--Mills measure, we mentioned that discrete connections were implicitly identified with their holonomies along loops, which was quite acceptable. It becomes much less obvious at the continuous level, and the convoluted construction of the continuous Yang--Mills measure can leave a doubt whether it is indeed related to the formal Yang--Mills measure~\eqref{eq:YM_heuristique_eucl}. A first step was achieved by L\'evy and Norris \cite{LN06}, who proved a large deviation principle of the Yang--Mills measure on a compact surface with area $t$ when $t\to 0$, whose rate function is proportional to the Yang--Mills action $S_\YM$. Replacing random connections by random holonomies is here harmless because the observables of the theory are essentially Wilson loops, which are central functions of the holonomies. This point of view has been used with a relative success in an attempt of defining quantum gravity, and it is called the \emph{loop representation}, see \cite{GP}. Another attempt to describe the Yang--Mills measure directly in terms of random connections has been made by Chevyrev \cite{Che19} on the torus, and it is generalized in forthcoming papers by other authors \cite{BCDRT,DangNohra}. In all these works, the holonomies of random connections along \emph{suitable} loops (which are slightly less general than the ones considered in \cite{Lev10} and depend notably on some gauge-fixing procedure) are proved to be identical in distribution to the marginals of the Yang--Mills holonomy process. The measure on connections on the torus defined in \cite{Che19} has been also proved to be the universal limit of the 2D Yang--Mills Langevin dynamic in a series of two (long and technical) papers \cite{CCHS22,CheShe23}.

\section{Character expansion of the partition function}\label{sec:AH_groupes}

In this section, we will recall the main results of representation theory of compact groups required to compute the Fourier transform of the partition function, which will be stated in Theorem~\ref{thm:fourier_pf}. We shall mainly refer to \cite{BtD,Far}.

\subsection{Representations of Lie groups}

\begin{definition}
Let $G$ be a Lie group. A real (resp. complex) \emph{representation} of $G$ is a couple $(\rho,V)$ where $V$ is a real (resp. complex) vector space and $\rho:G\to\GL(V)$ is a smooth group morphism. The \emph{degree} of $\rho$, also called its \emph{dimension}, is the dimension of the vector space $V$ and denoted by $d_\rho$.
\end{definition}

A representation $(\rho,V)$ of $G$ is said to be \emph{irreducible} if the only subspaces of $V$ stable by $\rho(g)$ for any $g\in G$ are $\{0\}$ and $V$. Irreducible representations somehow constitute the building blocks of representation theory.

Note that for any finite-dimensional representation $(\rho,\mathcal{H})$ of a compact Lie group $G$ in a Hilbert space\footnote{Here, it might be overkill to speak about a Hilbert space, since it is a finite-dimensional vector space, but we prefer to use this denomination to underline the fact that it is endowed with an inner product. In general, there might be unitary representations in infinite-dimensional Hilbert spaces, e.g. the left and right regular representations act on $L^2(G)$.} $\mathcal{H}$, there is an inner product on $\mathcal{H}$ such that the representation is unitary, i.e.
\[
\Vert \pi(g)v\Vert = \Vert v\Vert,\quad \forall g\in G,\ \forall v\in\mathcal{H}.
\]
Indeed, if we start from an arbitrary inner product $\langle\cdot,\cdot\rangle_0$, the new inner product $\langle\cdot,\cdot\rangle$ defined by
\[
\langle v,w\rangle = \int_G \langle \rho(g)v,\rho(g)w\rangle_0 dg,\quad \forall (v,w)\in \mathcal{H}^2,
\]
turns $\rho$ into a unitary representation.

\begin{lemma}[Schur]\label{lem:schur}
Let $(\rho_1,V_1)$ and $(\rho_2,V_2)$ be two irreducible complex representations of a Lie group $G$, and $A\in\Hom(V_1,V_2)$ be a linear application such that $A\rho_1=\rho_2 A$ (it is said to be $G$-\emph{equivariant}). Then one of the following assertions is true:
\begin{enumerate}
\item $A = 0$,
\item $A$ is an isomorphism.
\end{enumerate}
Furthermore, if $V_1=V_2$ and $\rho_1=\rho_2$, then there exists $\lambda\in\C$ such that $A=\lambda I_{V_1}$.
\end{lemma}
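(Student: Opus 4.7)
The plan is to exploit irreducibility by showing that $\ker A$ and $\mathrm{Im}(A)$ are $G$-invariant subspaces, then use the fact that we work over $\C$ (and in finite dimension, which the representations of compact groups have here) to get the scalar statement via an eigenvalue argument.

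First I would verify that $\ker A \subset V_1$ is stable under $\rho_1$: if $v \in \ker A$, then $A(\rho_1(g) v) = \rho_2(g) A(v) = 0$, so $\rho_1(g) v \in \ker A$. Similarly, $\mathrm{Im}(A) \subset V_2$ is stable under $\rho_2$: for $w = A(v)$, $\rho_2(g) w = A(\rho_1(g) v) \in \mathrm{Im}(A)$. Since $(\rho_1, V_1)$ and $(\rho_2, V_2)$ are irreducible, the only $\rho_i$-stable subspaces are the trivial ones. Thus $\ker A \in \{0, V_1\}$ and $\mathrm{Im}(A) \in \{\{0\}, V_2\}$. If $A \neq 0$, then $\ker A \neq V_1$ forces $\ker A = \{0\}$ (so $A$ is injective), and $\mathrm{Im}(A) \neq \{0\}$ forces $\mathrm{Im}(A) = V_2$ (so $A$ is surjective). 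Hence $A$ is an isomorphism. This settles the dichotomy.

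For the second assertion, assume $V_1 = V_2 =: V$ and $\rho_1 = \rho_2 =: \rho$. The key point is that $V$ is a finite-dimensional \emph{complex} vector space, so the endomorphism $A \in \End(V)$ admits at least one eigenvalue $\lambda \in \C$. Then $A - \lambda I_V$ is again $G$-equivariant (since $I_V$ commutes with everything, the intertwining relation is preserved), and it has nonzero kernel (the $\lambda$-eigenspace). By the first part applied to $A - \lambda I_V$, this operator cannot be an isomorphism, so it must be zero, giving $A = \lambda I_V$.

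There is no real obstacle here; the only subtle point is why we need the complex setting in the second part, namely to guarantee the existence of an eigenvalue (over $\R$, an endomorphism of an odd-dimensional space has a real eigenvalue but in general one may only get a $2$-dimensional invariant subspace, which is why Schur's lemma in the real case allows $\End_G(V)$ to be $\R$, $\C$, or $\mathbb{H}$). Since we only claim the complex statement, this is not an issue and the proof is complete.
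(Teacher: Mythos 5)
Your proof is correct. The paper actually states Schur's lemma without proof (deferring to its references \cite{BtD,Far}), and your argument -- kernel and image are equivariant subspaces, irreducibility forces the dichotomy, and an eigenvalue of $A$ over $\C$ in finite dimension makes $A-\lambda I_V$ a non-invertible intertwiner, hence zero -- is exactly the standard proof one finds there. Your side remark correctly identifies where finite-dimensionality and algebraic closedness enter, which matches the paper's (implicitly finite-dimensional) setting.
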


Schur's Lemma enables to define an equivalence relation on irreducible representations, and we denote by $\widehat{G}$ the set of equivalence classes, sometimes called the \emph{dual} of $G$. Note that if $G$ is compact, then $\widehat{G}$ is countable, and if $G$ is abelian then $\widehat{G}$ is also a group.

\subsection{Peter--Weyl theorem}

Let $G$ be a compact group and $(\rho,\mathcal{H})$ be a unitary irreducible representation of $G$. The \emph{matrix coefficients} of $\rho$ are functions of the form
\[
g\mapsto \langle \rho(g) u, v\rangle,
\]
for $u,v\in\mathcal{H}$. The space of all matrix coefficients is denoted by $\mathscr{M}_\rho$, and if $\{e_1,\ldots,e_k\}$ is an orthonormal basis of $\mathcal{H}$, $\mathscr{M}_\rho$ is generated by the \emph{elementary matrix coefficients}
\[
\rho_{ij}:g\mapsto \langle\rho(g)e_i,e_j\rangle,\quad \forall 1\leq i,j\leq k.
\]

\begin{theorem}[Schur's orthogonality relations I]\label{thm:orth_schur1}
Let $(\rho,\mathcal{H})$ be a unitary irreducible representation of a compact Lie group $G$. For any $u,v,u',v'\in\mathcal{H}$,
\begin{equation}\label{eq:orth_schur1}
\int_G \langle\rho(g)u,v\rangle\overline{\langle \rho(g)u',v'\rangle} dg = \frac{1}{d_\rho}\langle u,u'\rangle \langle v,v'\rangle.
\end{equation}
In particular, if $\{e_1,\ldots,e_{d_\rho}\}$ is an orthonormal basis of $\mathcal{H}$, then
\begin{equation}\label{eq:orth_schur2}
\int_G \rho_{ij}(g)\overline{\rho_{k\ell}(g)}dg = \frac{1}{d_\rho}\delta_{ik}\delta_{j\ell}.
\end{equation}
\end{theorem}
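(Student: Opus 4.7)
The plan is to prove \eqref{eq:orth_schur1} by the classical $G$-averaging argument combined with Schur's lemma; the matrix-coefficient identity \eqref{eq:orth_schur2} will follow as an immediate specialization.

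For any linear operator $P\in\End(\mathcal{H})$, I would introduce its $G$-average
\[
\widetilde{P} := \int_G \rho(g)\, P\, \rho(g)^{-1}\, dg.
\]
Left-invariance of the Haar measure gives $\rho(h)\widetilde{P}\rho(h)^{-1}=\widetilde{P}$ for every $h\in G$, so $\widetilde{P}$ is $G$-equivariant. Irreducibility of $\rho$ together with Schur's lemma (the case $V_1=V_2$, $\rho_1=\rho_2$) then forces $\widetilde{P}=\lambda_P\,\Id_{\mathcal{H}}$ for some scalar $\lambda_P\in\C$, and taking traces and using the conjugation invariance of $\tr$ yields $\lambda_P=\tr(P)/d_\rho$.

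To extract \eqref{eq:orth_schur1}, I would then specialize $P$ to a rank-one operator built from $u$ and $u'$ (a ``ket-bra'' of the form $|u\rangle\langle u'|$, that is, $P(w)=\langle w,u'\rangle u$), whose trace is $\langle u,u'\rangle$. Computing the matrix element $\langle\widetilde{P}v,v'\rangle$ in two ways closes the argument: the Schur computation yields $\lambda_P\langle v,v'\rangle=d_\rho^{-1}\langle u,u'\rangle\langle v,v'\rangle$, while unfolding $\widetilde{P}$ and using the unitarity $\rho(g)^{-1}=\rho(g)^{*}$ to move $\rho(g)^{-1}$ from $v$ onto $u'$ produces, up to the naming of dummy variables, exactly the integral on the left-hand side of \eqref{eq:orth_schur1}. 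Identity \eqref{eq:orth_schur2} is then recovered by specializing to $u=e_i$, $v=e_j$, $u'=e_k$, $v'=e_\ell$ and using $\langle e_p,e_q\rangle=\delta_{pq}$.

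The key conceptual input is Schur's lemma; everything else is bookkeeping. The only point that requires genuine care is tracking the Hermitian conjugates in the matrix-element calculation, so that the trace of $P$ and the integrand come out with consistent sesquilinear conventions and matching complex conjugates; once this is in place there is no further obstacle.
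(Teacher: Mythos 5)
Your proposal is correct, and in fact the paper gives no proof of this theorem at all: it is quoted as a standard fact, with \cite{BtD,Far} cited at the top of the section, so there is nothing to compare against. The averaging argument you outline --- form $\widetilde{P}=\int_G\rho(g)P\rho(g)^{-1}dg$, observe it is an intertwiner, invoke Schur's lemma to get $\widetilde{P}=\frac{\Tr(P)}{d_\rho}\Id_{\mathcal{H}}$, and specialize to the rank-one operator $P(w)=\langle w,u'\rangle u$ --- is exactly the classical proof, and every step goes through (left-invariance of the Haar measure suffices for the equivariance of $\widetilde{P}$, and $\Tr(\widetilde{P})=\Tr(P)$ pins down the scalar). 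The one point worth making explicit, since you flagged the conjugate-tracking as the delicate step: with the paper's convention that $\langle\cdot,\cdot\rangle$ is linear in its first slot (as used in defining $\rho_{ij}(g)=\langle\rho(g)e_i,e_j\rangle$), unfolding $\langle\widetilde{P}v,v'\rangle$ produces $\int_G\langle\rho(g)u,v'\rangle\overline{\langle\rho(g)u',v\rangle}\,dg$, so after relabelling the right-hand side of \eqref{eq:orth_schur1} comes out as $\frac{1}{d_\rho}\langle u,u'\rangle\overline{\langle v,v'\rangle}$ rather than $\frac{1}{d_\rho}\langle u,u'\rangle\langle v,v'\rangle$; a sesquilinearity count on both sides (the left side is conjugate-linear in $v$ and linear in $v'$) confirms that the conjugate is genuinely needed, i.e.\ the displayed form of \eqref{eq:orth_schur1} has a harmless conjugation slip. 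This does not affect the orthonormal-basis consequence \eqref{eq:orth_schur2}, since $\langle e_j,e_\ell\rangle=\delta_{j\ell}$ is real, so your derivation of \eqref{eq:orth_schur2} stands as written.
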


\begin{theorem}[Schur's orthogonality relations II]\label{thm:orth_schur2}
Let $(\rho,\mathcal{H})$ and $(\rho',\mathcal{H}')$ be two irreducible unitary representations of a compact Lie group $G$. If $\rho$ and $\rho'$ are not equivalent, then $\mathscr{M}_\rho$ and $\mathscr{M}_{\rho'}$ are orthogonal in $L^2(G)$.
\end{theorem}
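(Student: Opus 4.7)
The plan is to reduce the statement to Schur's Lemma (stated just before the Peter--Weyl material) via the standard averaging trick. Fix vectors $u,v\in\mathcal{H}$ and $u',v'\in\mathcal{H}'$; using unitarity of $\rho'$, I would first rewrite
\[
\overline{\langle\rho'(g)u',v'\rangle}=\langle v',\rho'(g)u'\rangle=\langle\rho'(g^{-1})v',u'\rangle,
\]
so that the target orthogonality $\int_G\langle\rho(g)u,v\rangle\overline{\langle\rho'(g)u',v'\rangle}\,dg=0$ becomes the vanishing of $\int_G\langle\rho(g)u,v\rangle\langle\rho'(g^{-1})v',u'\rangle\,dg$.

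Next, for the fixed $v$ and $v'$, I would introduce the rank-one map $A\in\Hom(\mathcal{H},\mathcal{H}')$ defined by $Ax=\langle x,v\rangle v'$, and form the averaged operator
\[
\tilde{A}=\int_G \rho'(g^{-1})A\rho(g)\,dg.
\]
This integral is well defined as an element of the finite-dimensional space $\Hom(\mathcal{H},\mathcal{H}')$ since the integrand is continuous and we are integrating against a probability measure. A direct computation unwinding the definition of $A$ shows
\[
\langle\tilde{A}u,u'\rangle=\int_G\langle\rho(g)u,v\rangle\langle\rho'(g^{-1})v',u'\rangle\,dg,
\]
so proving that $\tilde{A}=0$ will finish the argument for arbitrary $u,u'$.

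The key step is then checking that $\tilde{A}$ intertwines $\rho$ and $\rho'$. Using the substitution $g\mapsto gh$ and the invariance of the Haar measure, one obtains $\tilde{A}\rho(h)=\rho'(h)\tilde{A}$ for all $h\in G$. Since $\rho$ and $\rho'$ are irreducible and \emph{not} equivalent, Schur's Lemma forces $\tilde{A}=0$, and hence the paired integral above vanishes for all choices of vectors. This establishes the orthogonality of $\mathscr{M}_\rho$ and $\mathscr{M}_{\rho'}$ in $L^2(G)$.

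The main conceptual point is the translation of an $L^2$-orthogonality statement into a statement about intertwining operators; once that reformulation is in place, Schur's Lemma applies almost mechanically. I do not expect serious technical difficulty: the only care required is to correctly handle the complex conjugation via unitarity (so as to produce $\rho'(g^{-1})$ and thus an intertwiner rather than some twisted variant) and to verify the equivariance of $\tilde{A}$ from the bi-invariance of $dg$, both of which are short and standard.
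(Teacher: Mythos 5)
Your proof is correct: the averaging operator $\tilde{A}=\int_G\rho'(g^{-1})A\rho(g)\,dg$ is indeed an intertwiner by invariance of the Haar measure, Schur's Lemma (as stated in the paper for inequivalent irreducibles) forces $\tilde{A}=0$, and the pairing identity $\langle\tilde{A}u,u'\rangle=\int_G\langle\rho(g)u,v\rangle\overline{\langle\rho'(g)u',v'\rangle}\,dg$ then gives the orthogonality of all matrix coefficients. Note that the paper states this theorem without proof, as standard background from \cite{BtD,Far}; your argument is precisely the classical one those references use, so there is nothing to reconcile.
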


A fundamental consequence of these orthogonality relations is the following theorem:

\begin{theorem}[Peter--Weyl]\label{thm:Peter-Weyl}
Let $G$ be a compact Lie group. For any $\lambda\in\widehat{G}$, denote by $\mathscr{M}_\lambda$ the space of matrix coefficients of a representation of the class $\lambda$. We have
\begin{equation}
L^2(G) = \widehat{\bigoplus_{\lambda\in\widehat{G}}}\ \mathscr{M}_\lambda,
\end{equation}
where $\widehat{\bigoplus}$ denotes the $L^2$ closure of the algebraic direct sum.
\end{theorem}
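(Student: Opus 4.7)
The plan is to establish the Peter--Weyl decomposition in two stages. First, the orthogonality of the summands $\mathscr{M}_\lambda$ for distinct classes is already the content of Theorem~\ref{thm:orth_schur2}, and each $\mathscr{M}_\lambda$ is finite-dimensional (of dimension $d_\lambda^2$ by Schur's Lemma), hence closed in $L^2(G)$. Thus $\widehat{\bigoplus}_{\lambda \in \widehat{G}} \mathscr{M}_\lambda$ makes sense as a closed subspace of $L^2(G)$, and the entire task reduces to proving that it is dense.

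I will argue density by contradiction: assume there exists a nonzero $f \in L^2(G)$ orthogonal to every $\mathscr{M}_\lambda$. The core idea is to extract from $f$ a nontrivial finite-dimensional subrepresentation of the left regular representation of $G$, which, upon decomposition into irreducibles, forces some $\mathscr{M}_\lambda$ to contain a vector on which $f$ has nonzero projection. The device for producing this finite-dimensional subspace is the spectral theorem for compact self-adjoint operators applied to a convolution operator.

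Concretely, I introduce an approximate identity $(\phi_\varepsilon)_{\varepsilon > 0}$ of continuous nonnegative functions supported in a shrinking neighborhood of $1_G$, normalized by $\int_G \phi_\varepsilon\, dg = 1$ and symmetrized so that $\phi_\varepsilon(x^{-1}) = \overline{\phi_\varepsilon(x)}$. Then $f \ast \phi_\varepsilon \to f$ in $L^2(G)$, and the convolution operator $T_\varepsilon : h \mapsto h \ast \phi_\varepsilon$ is a Hilbert--Schmidt integral operator with continuous kernel, hence compact; the symmetry on $\phi_\varepsilon$ makes it self-adjoint. Crucially, $T_\varepsilon$ commutes with the left regular representation $L_g h = h(g^{-1} \cdot)$, because right-convolution is left-equivariant. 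By the spectral theorem, $L^2(G)$ decomposes as a Hilbert sum of finite-dimensional eigenspaces $E_\mu$ of $T_\varepsilon$, each of which is $G$-invariant under $L$.

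The concluding step is to identify every vector of a nonzero $E_\mu$ as a matrix coefficient. Any $h \in E_\mu$ satisfies $h = \mu^{-1}(h \ast \phi_\varepsilon)$ and is therefore continuous, so evaluation at $1_G$ is well-defined; the finite-dimensional unitary representation $L|_{E_\mu}$ splits into irreducibles, and the identity $h(g) = (L_{g^{-1}} h)(1_G)$ exhibits $h$ as a matrix coefficient of $L|_{E_\mu}$, hence as an element of $\bigoplus_{\lambda} \mathscr{M}_\lambda$. Summing over the eigenspaces shows $f \ast \phi_\varepsilon \in \widehat{\bigoplus}_{\lambda} \mathscr{M}_\lambda$, and letting $\varepsilon \to 0$ gives $f \in \widehat{\bigoplus}_{\lambda} \mathscr{M}_\lambda$, contradicting the assumption unless $f = 0$. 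The main obstacle I anticipate is this final identification: one must justify that the eigenspaces consist of continuous functions (a regularity issue handled by the continuity of $\phi_\varepsilon$) and that they embed into the algebraic sum of matrix coefficient spaces via the evaluation-at-identity trick. Setting up the correct approximate identity with the required symmetry and ensuring the Hilbert--Schmidt/left-equivariance properties of $T_\varepsilon$ are more routine but must be done carefully.
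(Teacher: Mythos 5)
The paper does not prove this theorem at all: it is stated as a classical fact, introduced by the (slightly misleading) phrase that it is ``a fundamental consequence of the orthogonality relations,'' and the reader is implicitly referred to \cite{BtD,Far}. So there is nothing in the paper to compare against line by line; your proposal supplies the standard textbook proof, and it is essentially correct and complete in outline. The orthogonality relations only give the easy half (pairwise orthogonality of the $\mathscr{M}_\lambda$, each of dimension $d_\lambda^2$ by Theorem~\ref{thm:orth_schur1} rather than by Schur's Lemma directly, hence closed); the real content is density, which you correctly obtain by the compact self-adjoint convolution operator $T_\varepsilon h = h*\phi_\varepsilon$ with a symmetrized approximate identity, its commutation with left translation, and the spectral theorem producing finite-dimensional $G$-invariant eigenspaces of continuous functions. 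The one point you flag as delicate is indeed the only place requiring a little extra care: writing $h(g)=(L_{g^{-1}}h)(1_G)=\langle h, L_g w\rangle$ via the evaluation functional exhibits $h$ as the \emph{complex conjugate} of a matrix coefficient of $L|_{E_\mu}$, i.e.\ a matrix coefficient of the contragredient representation; since $\widehat{G}$ is stable under conjugation and the sum runs over all of $\widehat{G}$, this still lands in $\bigoplus_\lambda\mathscr{M}_\lambda$, but a full write-up should say so explicitly. With that sentence added, your argument is a valid and self-contained proof of a statement the survey leaves to the references.
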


One of the main applications of this theorem is Plancherel's formula for compact groups, that we shall see in the next subsection. Beforehand, let us rewrite the Peter--Weyl theorem under a more specific form. A function $f:G\to\C$ on a compact Lie group is said to be \emph{central} if for all $(g,h)\in G^2$, $f(ghg^{-1})=f(h)$. It is in particular the case for the \emph{characters}
\[
\chi_\rho:g\mapsto \Tr(\rho(g)),
\]
for any finite-dimensional representation $\rho$ of $G$. By convention, if $\rho$ is an irreducible representation of the class $\lambda\in\widehat{G}$, we will denote by $\chi_\lambda$ the character associated to $\rho$. In particular, $\chi_\lambda\in\mathscr{M}_\lambda$.

\begin{theorem}[Peter--Weyl II]\label{thm:Peter-Weyl2}
For any compact Lie group $G$, the set $\{\chi_\lambda,\lambda\in\widehat{G}\}$ is a Hilbert basis of the space $Z^2(G)\subset L^2(G)$ of square integrable central functions.
\end{theorem}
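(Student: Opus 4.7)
The plan is to combine the first Peter--Weyl theorem with Schur's orthogonality relations, via an averaging operator that projects $L^2(G)$ onto $Z^2(G)$. First, I would verify that the family $\{\chi_\lambda:\lambda\in\widehat{G}\}$ is orthonormal in $L^2(G)$: for $\lambda\neq\lambda'$, the characters lie in the pairwise orthogonal spaces $\mathscr{M}_\lambda$ and $\mathscr{M}_{\lambda'}$ by Theorem~\ref{thm:orth_schur2}; and writing $\chi_\lambda=\sum_i\rho_{ii}$ in a unitary realization with orthonormal basis and expanding $\|\chi_\lambda\|_{L^2}^2$, the orthogonality relation~\eqref{eq:orth_schur2} collapses the double sum to $\sum_i d_\rho^{-1}=1$. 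Centrality of $\chi_\lambda$ follows at once from cyclicity of the trace, so each $\chi_\lambda\in Z^2(G)$ and it only remains to prove density.

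For density, I would introduce the averaging operator
\[
P:L^2(G)\longrightarrow L^2(G),\qquad Pf(g)=\int_G f(hgh^{-1})dh.
\]
Using Fubini and bi-invariance of the Haar measure, one checks that $P$ is a continuous orthogonal projection with image exactly $Z^2(G)$. The crucial computation is then $P(\mathscr{M}_\lambda)=\C\chi_\lambda$ for every $\lambda\in\widehat{G}$: picking a unitary realization $\rho$ of class $\lambda$, using $\rho(h^{-1})=\rho(h)^*$ to expand each matrix coefficient of $\rho(hgh^{-1})$, and applying Schur's orthogonality~\eqref{eq:orth_schur2} to the integral over $h$, one gets $P(\rho_{ij})=d_\rho^{-1}\delta_{ij}\chi_\lambda$. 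Since $\chi_\lambda\in\mathscr{M}_\lambda$ is itself central and hence fixed by $P$, one concludes $P(\mathscr{M}_\lambda)=\C\chi_\lambda$.

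To finish, take any $f\in Z^2(G)$. By Theorem~\ref{thm:Peter-Weyl}, $f=\sum_\lambda f_\lambda$ with $f_\lambda\in\mathscr{M}_\lambda$ and convergence in $L^2(G)$. Applying $P$, and using centrality of $f$ together with continuity of $P$, gives $f=Pf=\sum_\lambda Pf_\lambda\in\overline{\Span\{\chi_\lambda:\lambda\in\widehat{G}\}}$. Combined with orthonormality, this proves the Hilbert basis property. The only non-routine step is the projection identity $P(\mathscr{M}_\lambda)=\C\chi_\lambda$; once this is in hand, the rest is a formal consequence of the first Peter--Weyl theorem and Schur's relations.
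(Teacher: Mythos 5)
Your proof is correct: orthonormality of the characters follows from Schur's relations exactly as you compute, and the averaging projection $P$ with the identity $P(\rho_{ij})=d_\rho^{-1}\delta_{ij}\chi_\lambda$ is precisely what is needed to push the Peter--Weyl decomposition $L^2(G)=\widehat{\bigoplus}_\lambda\mathscr{M}_\lambda$ down to $Z^2(G)$. The paper states this theorem without proof, presenting it as a reformulation of Theorem~\ref{thm:Peter-Weyl}, and your argument is the standard way to fill that gap; the only steps worth spelling out in a written version are that $P$ is well defined and contractive on $L^2(G)$ (Minkowski's integral inequality plus bi-invariance of the Haar measure) and that its range is exactly $Z^2(G)$, both of which you correctly flag as routine.
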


\subsection{Fourier transform}

\begin{definition}
The \emph{Fourier transform} of $f\in L^2(G)$ is the function $\hat{f}:\widehat{G}\to\End(\mathcal{H}_\lambda)$ defined by
\[
\hat{f}(\lambda) = \int_G f(g)\rho_\lambda(g^{-1})dg,
\]
where $(\rho_\lambda,\mathcal{H}_\lambda)$ is an irreducible unitary representation of $G$ in the class $\lambda$.
\end{definition}

\begin{theorem}[Plancherel]
The map $f\mapsto\hat{f}$ is a unitary isomorphism of Hilbert spaces between $L^2(G)$ and the space of sequences $A=(A_\lambda)_{\lambda\in\widehat{G}}$ of operators $A_\lambda\in\End(\mathcal{H}_\lambda)$ such that
\[
\Vert A\Vert^2=\sum_{\lambda\in\widehat{G}}d_\lambda \Tr(A_\lambda^*A_\lambda)<\infty.
\]
Moreover, for any $f\in L^2(G)$,
\begin{equation}\label{eq:decomp_L2_f}
f(g) = \sum_{\lambda\in\widehat{G}}d_\lambda \Tr(\hat{f}(\lambda)\rho_\lambda(g)).
\end{equation}
\end{theorem}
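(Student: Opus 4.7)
The plan is to reduce Plancherel's theorem to an application of the Peter--Weyl theorem (Theorem~\ref{thm:Peter-Weyl}) combined with Schur's orthogonality relations (Theorems~\ref{thm:orth_schur1} and \ref{thm:orth_schur2}). The strategy is to produce an explicit orthonormal basis of $L^2(G)$ indexed by triples $(\lambda,i,j)$ and then to translate the usual Parseval identity and Fourier inversion in that basis into the compact matrix-valued form stated in the theorem.

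First, for each $\lambda\in\widehat{G}$ fix a unitary representative $(\rho_\lambda,\mathcal{H}_\lambda)$ and an orthonormal basis $\{e_1^\lambda,\ldots,e_{d_\lambda}^\lambda\}$ of $\mathcal{H}_\lambda$; let $\rho_{ij}^\lambda(g)=\langle\rho_\lambda(g)e_j^\lambda,e_i^\lambda\rangle$ be the elementary matrix coefficients. By the two Schur orthogonality relations, the family $\{\sqrt{d_\lambda}\,\rho_{ij}^\lambda:\lambda\in\widehat{G},1\le i,j\le d_\lambda\}$ is orthonormal in $L^2(G)$, and by Theorem~\ref{thm:Peter-Weyl} it is a Hilbert basis. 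Since $\rho_\lambda$ is unitary, the matrix entries of $\hat f(\lambda)=\int_G f(g)\rho_\lambda(g^{-1})\,dg$ are
\[
\hat f(\lambda)_{ij}=\int_G f(g)\,\overline{\rho_{ji}^\lambda(g)}\,dg=\langle f,\rho_{ji}^\lambda\rangle_{L^2(G)},
\]
so the Fourier coefficient of $f$ along the basis vector $\sqrt{d_\lambda}\,\rho_{ji}^\lambda$ equals $\sqrt{d_\lambda}\,\hat f(\lambda)_{ij}$.

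Parseval's identity in this orthonormal basis then gives immediately
\[
\|f\|_{L^2(G)}^2=\sum_{\lambda\in\widehat{G}}\sum_{i,j=1}^{d_\lambda}d_\lambda|\hat f(\lambda)_{ij}|^2=\sum_{\lambda\in\widehat{G}}d_\lambda\,\Tr(\hat f(\lambda)\hat f(\lambda)^*),
\]
which is exactly the isometry claim. The inversion formula comes from rewriting the basis expansion: combining
\[
f=\sum_{\lambda,i,j}\langle f,\sqrt{d_\lambda}\rho_{ji}^\lambda\rangle\sqrt{d_\lambda}\rho_{ji}^\lambda=\sum_{\lambda}d_\lambda\sum_{i,j}\hat f(\lambda)_{ij}\rho_{ji}^\lambda,
\]
with the identity $\sum_{i,j}\hat f(\lambda)_{ij}\rho_{ji}^\lambda(g)=\Tr(\hat f(\lambda)\rho_\lambda(g))$ yields~\eqref{eq:decomp_L2_f}, with convergence in $L^2(G)$.

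It remains to check surjectivity onto the Hilbert space of matrix sequences. Given $(A_\lambda)$ with $\sum_\lambda d_\lambda\Tr(A_\lambda^*A_\lambda)<\infty$, the partial sums of $\sum_\lambda d_\lambda\Tr(A_\lambda\rho_\lambda(\cdot))$ form a Cauchy sequence in $L^2(G)$ by the orthonormality of the $\sqrt{d_\lambda}\rho_{ij}^\lambda$, hence converge to some $f\in L^2(G)$; a direct computation using Schur's orthogonality (Theorem~\ref{thm:orth_schur1}) then shows $\hat f(\lambda)=A_\lambda$ for every $\lambda$. The main technical nuisance is keeping the conventions straight: because $\hat f$ is defined via $\rho_\lambda(g^{-1})=\rho_\lambda(g)^*$, one must be careful with the transposition of indices when identifying Fourier coefficients with matrix entries, and with the distinction between $\Tr(\hat f(\lambda)^*\hat f(\lambda))$ and $\Tr(\hat f(\lambda)\hat f(\lambda)^*)$ (which coincide). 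Once these book-keeping issues are handled, everything follows formally from Peter--Weyl plus the orthogonality relations.
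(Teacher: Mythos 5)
Your argument is correct: the paper states Plancherel's theorem without proof (it is quoted as a classical result, in the spirit of the references \cite{BtD,Far}), and your derivation is the standard one, obtained exactly from the ingredients the paper does record, namely Schur's orthogonality relations (Theorems~\ref{thm:orth_schur1} and~\ref{thm:orth_schur2}) and the Peter--Weyl decomposition (Theorem~\ref{thm:Peter-Weyl}). The index book-keeping is handled consistently (your convention $\rho^\lambda_{ij}(g)=\langle\rho_\lambda(g)e^\lambda_j,e^\lambda_i\rangle$ is the transpose of the paper's, but you use it coherently so that $\hat f(\lambda)_{ij}=\langle f,\rho^\lambda_{ji}\rangle$ and $\sum_{i,j}\hat f(\lambda)_{ij}\rho^\lambda_{ji}(g)=\Tr(\hat f(\lambda)\rho_\lambda(g))$ both come out right), and the surjectivity step correctly completes the claim that the map is a unitary isomorphism rather than merely an isometry.
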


One can give an interpretation of~\eqref{eq:decomp_L2_f} in terms of convolution:

\begin{corollary}
Let $f\in L^2(G)$ for a compact Lie group $G$. The following identity holds in $L^2(G)$:
\begin{equation}
f(g) = \sum_{\lambda\in\widehat{G}} d_\lambda (f*\chi_\lambda)(g).
\end{equation}
\end{corollary}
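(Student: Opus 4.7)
The plan is a direct computation starting from Plancherel's identity. Writing
\[
f(g) = \sum_{\lambda \in \widehat{G}} d_\lambda \Tr\bigl(\hat{f}(\lambda)\rho_\lambda(g)\bigr)
\]
as given by the preceding theorem, I would substitute the definition of the Fourier transform, namely $\hat{f}(\lambda) = \int_G f(h)\rho_\lambda(h^{-1})\, dh$, into each term. By linearity of the trace and the fact that the integrand is operator-valued in $\End(\mathcal{H}_\lambda)$, I can exchange integral and trace (the integration is over the compact group $G$ against the bounded function $h \mapsto f(h)\rho_\lambda(h^{-1})\rho_\lambda(g)$, using $f \in L^2(G) \subset L^1(G)$).

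Next, I use the homomorphism property $\rho_\lambda(h^{-1})\rho_\lambda(g) = \rho_\lambda(h^{-1}g)$, so that
\[
\Tr\bigl(\hat{f}(\lambda)\rho_\lambda(g)\bigr) = \int_G f(h)\Tr\bigl(\rho_\lambda(h^{-1}g)\bigr)\, dh = \int_G f(h)\chi_\lambda(h^{-1}g)\, dh,
\]
and the right-hand side is exactly $(f * \chi_\lambda)(g)$ by the definition of convolution on $G$ recalled earlier. Substituting this back into Plancherel's formula yields the claimed identity.

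The only point that needs care is interpreting the sum. Since $f \in L^2(G)$, Plancherel guarantees convergence of the right-hand side in $L^2(G)$; the same convergence then holds for $\sum_\lambda d_\lambda (f * \chi_\lambda)$, and the identity is to be understood as equality in $L^2(G)$, as stated. There is no genuine obstacle here: the argument is a one-line manipulation once Plancherel and the representation property of $\rho_\lambda$ are in hand, so the ``main difficulty'' is simply to check that the exchange of trace and integral is legitimate, which is immediate from the finite-dimensionality of $\mathcal{H}_\lambda$ and the integrability of $f$.
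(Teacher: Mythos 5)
Your argument is correct and is essentially identical to the paper's own proof: both substitute the definition of $\hat{f}(\lambda)$ into Plancherel's formula, exchange the trace with the integral, and use $\rho_\lambda(h^{-1})\rho_\lambda(g)=\rho_\lambda(h^{-1}g)$ to recognize $(f*\chi_\lambda)(g)=\int_G f(h)\chi_\lambda(h^{-1}g)\,dh$. Your extra remarks on the legitimacy of the interchange and on $L^2$ convergence are just the "straightforward simplifications" the paper leaves implicit.
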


\begin{proof}
For any $\lambda\in\widehat{G}$, take any representation $(\rho_\lambda,\mathcal{H}_\lambda)$. By~\eqref{eq:decomp_L2_f} and the definition of Fourier coefficients,
\[
f(g) = \sum_{\lambda} d_\lambda \Tr\left(\int_G f(h)\rho_\lambda(h^{-1})dh\rho_\lambda(g)\right),
\]
and by the linearity of the trace,
\[
f(g) = \sum_{\lambda} d_\lambda \int_G f(h)\Tr(\rho_\lambda(h^{-1})\rho_\lambda(g))dh.
\]
The result follows then from straightforward simplifications.
\end{proof}

\subsection{Character expansion of the heat kernel}

Recall that the heat kernel on $G$ is the convolution semigroup $(p_t)_{t\geq 0}$ that satisfies the heat equation
\begin{equation}
\left\lbrace\begin{array}{ccl}
\frac{d}{dt}p_t(g) & = & \frac12\Delta_G p_t(g),\quad \forall g\in G, \forall t>0,\\
\lim_{t\downarrow 0}p_t & = & \delta_{1_G}.
\end{array}\right.
\end{equation}
It clearly depends on the choice of the inner product on $\mathfrak{g}$. As in the case of heat equation in $\R^d$, one can get an explicit formula for the heat kernel in terms of eigenvalues and eigenfunctions of the Laplacian by the means of Fourier transform. In our case, the eigenfunctions of $\Delta_G$ are in fact the irreducible characters.

\begin{proposition}
Let $G$ be a compact Lie group. The set $\{\chi_\lambda,\lambda\in\widehat{G}\}$ is a Hilbert basis of eigenvectors of $\Delta_G$. Let us denote by $c_2(\lambda)\geq 0$ the eigenvalue of $-\Delta_G$ associated to $\chi_\lambda$.
\end{proposition}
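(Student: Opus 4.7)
The plan is to interpret $\Delta_G$ as the action of a central element in the universal enveloping algebra $U(\mathfrak{g})$ and then to apply Schur's lemma. Writing $\Delta_G=\sum_{i=1}^d \widetilde{X}_i^2$ where $\widetilde{X}_i$ is the left-invariant vector field associated with $X_i$, I identify $\Delta_G$ with the image, under left-invariant differentiation, of the Casimir element $\mathcal{C}=\sum_i X_i^2\in U(\mathfrak{g})$. For an irreducible unitary representation $(\rho_\lambda,\mathcal{H}_\lambda)$, direct differentiation of $\chi_\lambda(ge^{tX_i})=\Tr(\rho_\lambda(g)e^{td\rho_\lambda(X_i)})$ yields
\[
\Delta_G\chi_\lambda(g)=\Tr\bigl(\rho_\lambda(g)\,d\rho_\lambda(\mathcal{C})\bigr),\qquad d\rho_\lambda(\mathcal{C})=\sum_{i=1}^d d\rho_\lambda(X_i)^2\in\End(\mathcal{H}_\lambda).
\]

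The key algebraic step, and the point where the $G$-invariance of $\langle\cdot,\cdot\rangle$ really enters, is the centrality of $\mathcal{C}$ in $U(\mathfrak{g})$. Differentiating $\Ad$-invariance of $\langle\cdot,\cdot\rangle$ at the identity gives $\langle [Y,X_i],X_j\rangle+\langle X_i,[Y,X_j]\rangle=0$, so the coefficients $a_{ij}=\langle [Y,X_i],X_j\rangle$ assemble into a skew-symmetric matrix. Expanding
\[
[d\rho_\lambda(X_i)^2,d\rho_\lambda(Y)]=d\rho_\lambda(X_i)d\rho_\lambda([X_i,Y])+d\rho_\lambda([X_i,Y])d\rho_\lambda(X_i)
\]
and summing over $i$ produces a double sum over the symmetric tensor $d\rho_\lambda(X_i)d\rho_\lambda(X_j)+d\rho_\lambda(X_j)d\rho_\lambda(X_i)$ weighted by $-a_{ij}$, hence it cancels pairwise.

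Consequently $d\rho_\lambda(\mathcal{C})$ commutes with $d\rho_\lambda(\mathfrak{g})$, and hence with all of $\rho_\lambda(G)$ (by connectedness of $G$, or equivalently by noting that bi-invariance of $\Delta_G$ makes it a $G\times G$-intertwiner on the finite-dimensional isotypic subspace $\mathscr{M}_\lambda\subset L^2(G)$). Schur's lemma then forces $d\rho_\lambda(\mathcal{C})=-c_2(\lambda)\,I_{\mathcal{H}_\lambda}$ for some scalar $c_2(\lambda)$, whence $\Delta_G\chi_\lambda=-c_2(\lambda)\chi_\lambda$. Choosing $\rho_\lambda$ unitary makes each $d\rho_\lambda(X_i)$ skew-Hermitian, so each $d\rho_\lambda(X_i)^2$ is negative semi-definite and $c_2(\lambda)\geq 0$. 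The Hilbert basis property of $\{\chi_\lambda\}_{\lambda\in\widehat{G}}$ in $Z^2(G)$ is then exactly Theorem~\ref{thm:Peter-Weyl2}.

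The only mildly subtle step is the skew/symmetric cancellation that establishes the centrality of $\mathcal{C}$; this is the sole place where the $\Ad$-invariance of $\langle\cdot,\cdot\rangle$ is used, and without it the characters would generally fail to be eigenfunctions. Everything else is routine differentiation of matrix coefficients combined with Schur's lemma and the already-proved Peter--Weyl decomposition of $Z^2(G)$.
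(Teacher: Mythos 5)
Your argument is correct, and it is the standard one: realize $\Delta_G$ as the left-invariant differential operator attached to the Casimir element $\sum_i X_i^2$, use the $\Ad$-invariance of the inner product to make the structure coefficients $a_{ij}=\langle [Y,X_i],X_j\rangle$ skew-symmetric so that the Casimir is central, apply Schur's lemma to get $\Delta_G\chi_\lambda=-c_2(\lambda)\chi_\lambda$, and read off $c_2(\lambda)\geq 0$ from the skew-Hermitian generators of a unitary model; the completeness in $Z^2(G)$ is exactly Theorem~\ref{thm:Peter-Weyl2}. There is nothing to compare against here: the paper states this proposition without proof, so your write-up simply supplies the omitted standard argument, and you even handle the one genuinely delicate point (passing from commutation with $d\rho_\lambda(\mathfrak{g})$ to commutation with all of $\rho_\lambda(G)$) by offering the bi-invariance/isotypic-component alternative, which also covers a possibly non-connected $G$.
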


The main result of this subsection is the following.

\begin{theorem}\label{thm:HK_decomp}
Let $G$ be a compact Lie group. The heat kernel $(p_t)_{t\geq 0}$ admits the following decomposition, which holds pointwise for $t>0$ and in $L^2(G)$ for all $t\geq 0$:
\begin{equation}
p_t(g) = \sum_{\lambda\in\widehat{G}} e^{-\frac{t}{2}c_2(\lambda)} d_\lambda \chi_\lambda(g),\quad \forall g\in G. 
\end{equation}
\end{theorem}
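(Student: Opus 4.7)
The plan is to diagonalize the heat equation in the Peter--Weyl basis of characters. First I would observe that $p_t$ is a central function for every $t>0$: the $G$-invariance of the chosen inner product on $\mathfrak{g}$ makes $\Delta_G$ bi-invariant, so it commutes with the adjoint action of $G$, and since the initial datum $\delta_{1_G}$ is itself central, this centrality is preserved under the heat flow. Theorem~\ref{thm:Peter-Weyl2} then lets me expand, at each $t>0$,
\[
p_t = \sum_{\lambda \in \widehat{G}} a_\lambda(t)\, \chi_\lambda, \qquad a_\lambda(t) = \int_G p_t(g)\, \overline{\chi_\lambda(g)}\, dg,
\]
with convergence in $L^2(G)$.

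I would then insert this expansion into the heat equation and combine the self-adjointness of $\Delta_G$ with the eigenfunction identity $\Delta_G \chi_\lambda = -c_2(\lambda) \chi_\lambda$ supplied by the preceding proposition, to reduce the PDE to the decoupled family of ODEs
\[
\dot{a}_\lambda(t) = \tfrac{1}{2}\bigl\langle \Delta_G p_t, \chi_\lambda\bigr\rangle = \tfrac{1}{2}\bigl\langle p_t, \Delta_G \chi_\lambda\bigr\rangle = -\tfrac{c_2(\lambda)}{2}\, a_\lambda(t),
\]
whose solutions are $a_\lambda(t) = a_\lambda(0^+)\, e^{-tc_2(\lambda)/2}$. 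The initial constants are then read off from the distributional limit $p_t \to \delta_{1_G}$ as $t \downarrow 0$: pairing with the continuous function $\chi_\lambda$ gives $a_\lambda(0^+) = \overline{\chi_\lambda(1_G)} = d_\lambda$, since $\chi_\lambda(1_G) = \Tr(I_{d_\lambda}) = d_\lambda \in \R$. Substituting back yields the claimed identity in $L^2(G)$ for every $t>0$.

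The main obstacle will be upgrading this $L^2$ convergence to the pointwise convergence advertised in the statement. Using the crude bound $|\chi_\lambda(g)| \le d_\lambda$, it would suffice to show
\[
\sum_{\lambda \in \widehat{G}} d_\lambda^{2}\, e^{-tc_2(\lambda)/2} < \infty \qquad \text{for every } t > 0,
\]
which would also give uniform convergence on $G$ and justify the termwise time differentiation used above. This summability is the spectral reflection of the fact that $p_t$ is smooth on $G$ for $t>0$, and can be obtained either from the hypoellipticity of the heat operator on the compact Riemannian manifold $G$, or via a direct Weyl-type estimate relating $c_2(\lambda)$ to $d_\lambda$ as $c_2(\lambda) \to \infty$. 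Modulo this analytic input, the rest of the argument is purely formal, driven entirely by Theorem~\ref{thm:Peter-Weyl2} and the diagonal structure of $\Delta_G$ in the character basis.
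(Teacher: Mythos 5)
Your argument is essentially the paper's: both diagonalize the heat semigroup in the character basis using $\Delta_G\chi_\lambda=-c_2(\lambda)\chi_\lambda$ to reduce the heat equation to the ODE $\dot a_\lambda=-\tfrac{1}{2}c_2(\lambda)a_\lambda$, the only cosmetic difference being that the paper writes the expansion as $p_t=\sum_\lambda d_\lambda(p_t*\chi_\lambda)$ via Plancherel while you use centrality of $p_t$ and Theorem~\ref{thm:Peter-Weyl2}, which gives the same coefficients since $d_\lambda(p_t*\chi_\lambda)=\langle p_t,\chi_\lambda\rangle\chi_\lambda$. You are in fact more careful than the paper on two points it leaves implicit — reading off $a_\lambda(0^+)=d_\lambda$ from the distributional initial condition, and flagging that pointwise convergence for $t>0$ needs the summability $\sum_\lambda d_\lambda^2 e^{-tc_2(\lambda)/2}<\infty$ — so the proposal is correct as it stands.
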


\begin{proof}
By Plancherel's formula, we have for any $t>0$ and $g\in G$
\[
p_t(g) = \sum_{\lambda\in\widehat{G}}d_\lambda (p_t*\chi_\lambda)(g).
\]
If we use the linearity of the integral and apply the heat equation,
\[
\frac{d}{dt}(p_t*\chi_\lambda)(g) = \left(\left(\frac{d}{dt}p_t\right)*\chi_\lambda\right)(g) = \frac12 ((\Delta_G p_t)*\chi_\lambda)(g).
\]
Furthermore, integrating by parts yields
\[
(\Delta_G p_t)*\chi_\lambda = (p_t * \Delta_G\chi_\lambda),
\]
thus an application of the identity $\Delta_G\chi_\lambda=-c_2(\lambda)\chi_\lambda$ gives
\[
\frac{d}{dt} (p_t*\chi_\lambda)(g) = -\frac12 c_2(\lambda) (p_t*\chi_\lambda)(g),\quad \forall g\in G.
\]
We can conclude by solving the differential equation for $\lambda\in\widehat{G}$.
\end{proof}

\begin{example}
In the abelian case $G=\U(1)$, one recovers a classical expression of the heat kernel on the circle, or equivalently on $[0,2\pi]$ with periodic boundary conditions: recall that the irreducible characters of $\U(1)$ are $\chi_n:e^{i\theta}\mapsto e^{i n\theta},$ for any $n\in\Z$, and one can check that $\Delta \chi_n=-n^2\chi_n$, thus Theorem~\ref{thm:HK_decomp} yields
\[
p_t(e^{i\theta})=\sum_{n\in\Z} e^{-\frac{t}{2}n^2+in\theta}.
\]
\end{example}

\subsection{Exact formula for the partition function}

We have seen that the partition function $Z_G(g,t)$ can be written as an integral over $G^{2g}$ involving the heat kernel $p_t$, for $t>0$. The goal of this subsection is to obtain the following character expansion of this matrix integral, which was first found by Rusakov \cite{Rus2} based on Migdal's work \cite{Mig}, and almost simultaneously rediscovered by Witten \cite{Wit91}.

\begin{theorem}\label{thm:fourier_pf}
Let $G$ be a compact Lie group, $\Sigma_g$ a surface of genus $g\geq 0$ and area $t> 0$. The Yang--Mills partition function satisfies
\begin{equation}
Z_G(g,t) = \sum_{\lambda\in\widehat{G}}e^{-\frac{t}{2}c_2(\lambda)}d_\lambda^{2-2g}.
\end{equation}
\end{theorem}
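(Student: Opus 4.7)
The plan is to plug the character expansion of the heat kernel from Theorem~\ref{thm:HK_decomp} into the explicit expressions~\eqref{eq:PartitionFunction2} and~\eqref{eq:PartitionFunction2bis} for $Z_G(g,t)$, and to reduce everything to a single combinatorial integral over a product of commutators, which I will evaluate via Schur's lemma and Schur's orthogonality relations.

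First I handle the sphere case $g=0$. Here $Z_G(0,t)=p_t(1)$, and Theorem~\ref{thm:HK_decomp} combined with $\chi_\lambda(1)=d_\lambda$ gives immediately $Z_G(0,t)=\sum_\lambda e^{-\frac{t}{2}c_2(\lambda)}d_\lambda^2$, which is the claimed formula for $g=0$. For $g\geq 1$, I substitute the expansion $p_t=\sum_\lambda e^{-\frac{t}{2}c_2(\lambda)}d_\lambda\chi_\lambda$ into~\eqref{eq:PartitionFunction2}. The pointwise convergence for $t>0$ together with an easy domination argument (the $L^2$ convergence suffices after integrating against a compactly supported indicator) allows me to exchange sum and integral, reducing the proof to the identity
\begin{equation}\label{eq:commutator_integral}
I_g(\lambda):=\int_{G^{2g}}\chi_\lambda\bigl([x_1,y_1]\cdots[x_g,y_g]\bigr)\,dx_1dy_1\cdots dx_gdy_g=d_\lambda^{1-2g}.
\end{equation}

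The heart of the proof is establishing~\eqref{eq:commutator_integral}, which I do by iterating a single commutator identity. Let $\rho_\lambda$ be an irreducible unitary representation in the class $\lambda$. By invariance of the Haar measure, for any matrix $M\in\End(\mathcal{H}_\lambda)$ the operator $\int_G\rho_\lambda(x)M\rho_\lambda(x)^{-1}dx$ commutes with every $\rho_\lambda(h)$, so Schur's lemma gives $\int_G\rho_\lambda(x)M\rho_\lambda(x)^{-1}dx=\frac{\Tr(M)}{d_\lambda}I_{\mathcal{H}_\lambda}$. Applying this with $M=\rho_\lambda(y)$ and taking the trace against $\rho_\lambda(y^{-1}W)$, where $W$ is a remaining product of commutators, yields
\[
\int_G\chi_\lambda\bigl(xyx^{-1}y^{-1}W\bigr)\,dx=\frac{\chi_\lambda(y)\chi_\lambda(y^{-1}W)}{d_\lambda}.
\]
Then integration over $y$ uses Schur's orthogonality~\eqref{eq:orth_schur2} applied to the matrix coefficients of $\rho_\lambda$, together with unitarity $\rho_\lambda(y^{-1})=\rho_\lambda(y)^*$: expanding $\chi_\lambda(y)=\sum_k\rho_\lambda(y)_{kk}$ and $\chi_\lambda(y^{-1}W)=\sum_{i,j}\rho_\lambda(W)_{ij}\overline{\rho_\lambda(y)_{ij}}$ and integrating term by term gives
\[
\int_G\chi_\lambda(y)\chi_\lambda(y^{-1}W)\,dy=\frac{\chi_\lambda(W)}{d_\lambda}.
\]
Combining the two steps, each commutator pair $(x_i,y_i)$ contributes a factor $d_\lambda^{-2}$ while stripping off one commutator from $W$; after $g$ iterations the remaining argument is $1$ and $\chi_\lambda(1)=d_\lambda$, yielding~\eqref{eq:commutator_integral}.

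Finally, assembling the pieces gives $Z_G(g,t)=\sum_\lambda e^{-\frac{t}{2}c_2(\lambda)}\,d_\lambda\cdot d_\lambda^{1-2g}=\sum_\lambda e^{-\frac{t}{2}c_2(\lambda)}d_\lambda^{2-2g}$, as claimed. The main obstacle, conceptually, is the commutator integral~\eqref{eq:commutator_integral}: once the Schur averaging trick $\int_G\rho_\lambda(x)M\rho_\lambda(x)^{-1}dx=\frac{\Tr(M)}{d_\lambda}I$ is identified, the proof becomes an induction on $g$. A secondary technical point is justifying the exchange of sum and integral; this is where the rapid decay of $e^{-\frac{t}{2}c_2(\lambda)}$ for $t>0$ is crucial, since the series $\sum_\lambda d_\lambda$ does not converge in general, but $\sum_\lambda e^{-\frac{t}{2}c_2(\lambda)}d_\lambda^{2-2g}$ does.
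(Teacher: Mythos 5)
Your proposal is correct and follows essentially the same route as the paper: expand $p_t$ in irreducible characters (Theorem~\ref{thm:HK_decomp}), reduce to the commutator integral $\int_{G^{2g}}\chi_\lambda([x_1,y_1]\cdots[x_g,y_g])=d_\lambda^{1-2g}$, and evaluate it by combining the Schur-lemma averaging identity (the paper's Proposition~\ref{prop:orbite_adjointe}, which you state in its equivalent operator form) with the character convolution $\chi_\lambda*\chi_\lambda=\chi_\lambda/d_\lambda$ coming from Schur orthogonality, iterating over the $g$ commutator pairs exactly as in the paper's induction (Proposition~\ref{prop:int_commu}). Your explicit treatment of the $g=0$ case via $Z_G(0,t)=p_t(1)$ and $\chi_\lambda(1)=d_\lambda$ is a small but welcome addition, since the paper's Lemma~\ref{lem:fourier_pf} is stated only for $g\geq 1$.
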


A first step towards the proof of this result is to decompose the partition function in a series involving integrals over irreducible characters.

\begin{lemma}\label{lem:fourier_pf}
Let $G$ be a compact Lie group. For any $g\geq 1,$
\begin{equation}
Z_G(g,t) = \sum_{\lambda\in\widehat{G}}e^{-\frac{t}{2}c_2(\lambda)}d_\lambda\int_{G^{2g}}\chi_\lambda([x_1,y_1]\cdots[x_g,y_g])\prod_{i=1}^g dx_idy_i.
\end{equation}
\end{lemma}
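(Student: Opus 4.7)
The plan is to combine the two results already established: formula \eqref{eq:PartitionFunction2}, which expresses $Z_G(g,t)$ as an integral against the heat kernel $p_t$ evaluated on a product of commutators, with the character expansion of the heat kernel from Theorem~\ref{thm:HK_decomp}, namely
\[
p_t(g)=\sum_{\lambda\in\widehat{G}}e^{-\frac{t}{2}c_2(\lambda)}d_\lambda\chi_\lambda(g).
\]
Substituting this identity into \eqref{eq:PartitionFunction2} and formally exchanging sum and integral yields the desired expression. So the entire proof reduces to (i) performing this substitution and (ii) justifying the interchange of the series and the $G^{2g}$-integral.

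The substitution step is immediate: for $t>0$ and each fixed $g\in G$ the character series converges pointwise to $p_t(g)$, so applying it at the point $[x_1,y_1]\cdots[x_g,y_g]$ gives an unambiguous integrand. For the exchange of sum and integral, I will invoke Fubini--Tonelli applied to the absolute value. One uses the elementary bound $|\chi_\lambda(u)|\leq \chi_\lambda(1)=d_\lambda$ valid for every $u\in G$, so the integrand is pointwise bounded in absolute value by
\[
\sum_{\lambda\in\widehat{G}} e^{-\frac{t}{2}c_2(\lambda)} d_\lambda^2,
\]
which is independent of the integration variables. Since $G^{2g}$ has finite (unit) measure, it suffices to check that this constant is finite for every $t>0$, which in turn justifies the swap on $G^{2g}$.

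The only real content is therefore the finiteness of $\sum_\lambda e^{-\frac{t}{2}c_2(\lambda)} d_\lambda^2$. This is exactly the statement that $p_t(1)=\sum_\lambda e^{-\frac{t}{2}c_2(\lambda)}d_\lambda^2$ is finite, which holds for any compact Lie group $G$ and any $t>0$: indeed $p_t\in L^2(G)$ for $t>0$, and by Parseval applied to Theorem~\ref{thm:HK_decomp} the squared $L^2$-norm is precisely this sum (up to obvious manipulations), or equivalently $p_{t}(1)=(p_{t/2}*p_{t/2})(1)=\|p_{t/2}\|_{L^2}^2<\infty$. This is the one step that genuinely uses analytic information about $G$ beyond the purely formal manipulations, and it is the main (very mild) obstacle: everything else is a direct consequence of the previous results of the section. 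Once the interchange is justified, one reads off
\[
Z_G(g,t)=\sum_{\lambda\in\widehat{G}} e^{-\frac{t}{2}c_2(\lambda)} d_\lambda\int_{G^{2g}}\chi_\lambda([x_1,y_1]\cdots[x_g,y_g])\prod_{i=1}^g dx_i dy_i,
\]
which is the claimed identity. The remaining reduction from this lemma to Theorem~\ref{thm:fourier_pf} (i.e.\ evaluating the commutator integral against a character as $d_\lambda^{1-2g}$) will be handled afterwards using Schur's orthogonality relations, but is not part of the present statement.
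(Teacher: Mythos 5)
Your proposal is correct and follows exactly the paper's route: the paper's own proof is the one-line remark that the lemma is ``a straightforward consequence of~\eqref{eq:PartitionFunction2} and Theorem~\ref{thm:HK_decomp}.'' You simply make explicit the justification of the sum--integral interchange (via $|\chi_\lambda(u)|\leq d_\lambda$ and the finiteness of $p_t(1)=\sum_\lambda e^{-\frac{t}{2}c_2(\lambda)}d_\lambda^2$), which the paper leaves implicit; this added detail is sound.
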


\begin{proof}
It is a straightforward consequence of~\eqref{eq:PartitionFunction2} and Theorem~\ref{thm:HK_decomp}.
\end{proof}

It remains to understand how to compute the integral of an irreducible character applied to a product of commutators of uniform random variables. To do so, we will need the following two propositions.

\begin{proposition}\label{prop:orbite_adjointe}
Let $G$ be a compact group, and $\lambda\in\widehat{G}$ an equivalence class of irreducible representations. For any $x,y\in G$,
\begin{equation}
\int_G \chi_\lambda(xgyg^{-1})dg = \frac{\chi_\lambda(x)\chi_\lambda(y)}{d_\lambda}.
\end{equation}
\end{proposition}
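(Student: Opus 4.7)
The plan is to reduce the integral to a statement about operators on the representation space via Schur's Lemma. Fix a unitary representation $(\rho_\lambda, \mathcal{H}_\lambda)$ in the class $\lambda$, and consider the operator
\[
A(y) = \int_G \rho_\lambda(gyg^{-1})\,dg = \int_G \rho_\lambda(g)\rho_\lambda(y)\rho_\lambda(g)^{-1}\,dg \in \End(\mathcal{H}_\lambda).
\]
I would first check that $A(y)$ commutes with every $\rho_\lambda(h)$: by left-invariance of the Haar measure, $\rho_\lambda(h) A(y) \rho_\lambda(h)^{-1} = \int_G \rho_\lambda(hg)\rho_\lambda(y)\rho_\lambda(hg)^{-1}\,dg = A(y)$. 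Since $\rho_\lambda$ is irreducible, Schur's Lemma forces $A(y) = \mu(y) I_{\mathcal{H}_\lambda}$ for some scalar $\mu(y) \in \C$.

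Next, I would identify the scalar by taking the trace. Since $\chi_\lambda$ is a class function,
\[
d_\lambda \mu(y) = \Tr A(y) = \int_G \chi_\lambda(gyg^{-1})\,dg = \chi_\lambda(y),
\]
so $\mu(y) = \chi_\lambda(y)/d_\lambda$. Finally, writing $\chi_\lambda(xgyg^{-1}) = \Tr(\rho_\lambda(x)\rho_\lambda(gyg^{-1}))$ and pulling the integral inside the trace (which is linear and continuous),
\[
\int_G \chi_\lambda(xgyg^{-1})\,dg = \Tr\bigl(\rho_\lambda(x) A(y)\bigr) = \frac{\chi_\lambda(y)}{d_\lambda}\,\Tr \rho_\lambda(x) = \frac{\chi_\lambda(x)\chi_\lambda(y)}{d_\lambda},
\]
which is the desired identity.

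There is no serious obstacle here: the only subtlety is the verification that $A(y)$ is $G$-equivariant, which relies on the bi-invariance of the Haar measure on the compact group $G$, plus the standard trick of exchanging the trace with an integral over a compact group. Everything else is bookkeeping.
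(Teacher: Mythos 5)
Your proof is correct. It rests on the same engine as the paper's proof -- Schur's Lemma -- but applies it at a different level: you average the \emph{operator} $\rho_\lambda(gyg^{-1})$ over $G$ to produce an intertwiner $A(y)$, which Schur's Lemma forces to be scalar, and then extract everything by taking traces. The paper instead works at the level of \emph{functions}: it observes that $y\mapsto F_{\lambda,x}(y)=\int_G\chi_\lambda(xgyg^{-1})\,dg$ is a central function and concludes it is proportional to $\chi_\lambda$, then evaluates at $y=1_G$ to identify the constant. Your route is slightly longer but arguably more airtight: the paper's step ``central, therefore proportional to $\chi_\lambda$'' implicitly uses the additional fact that $F_{\lambda,x}$ lies in the matrix-coefficient space $\mathscr{M}_\lambda$ (a general central function is a combination of \emph{all} irreducible characters, so centrality alone does not suffice; one needs Peter--Weyl orthogonality to isolate the $\lambda$-component). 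Your operator formulation sidesteps this entirely, since $A(y)\in\End(\mathcal{H}_\lambda)$ by construction and the scalar is pinned down by a single trace computation. Both approaches generalize equally well to the convolution identity $\chi_\lambda*\chi_\mu=\frac{\chi_\lambda}{d_\lambda}\delta_{\lambda,\mu}$ used immediately afterwards in the paper.
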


\begin{proof}
Let $(\rho_\lambda,\mathcal{H}_\lambda)$ be a unitary representation in the class $\lambda$. For any $y\in G$, let $A_y\in\mathrm{End}(\mathcal{H}_\lambda)$ be the operator defined by
\[
A_y=\int_G\rho_\lambda(g)\rho_\lambda(y)\rho_\lambda(g^{-1})dg=\int_G\rho_\lambda(gyg^{-1})dg.
\]
For any $x\in G$, $A_y$ commutes with $\rho_\lambda(x)$:
\[
\rho_\lambda(x)A_y =\int_G \rho_\lambda(xgyg^{-1})dg = \int_G \rho_\lambda(hyh^{-1}x)dh = A_y\rho_\lambda(x).
\]
In this sequence of equalities, we have successively used the fact that the representation is a group morphism, then a change of variable $h=xg$ which yields $g^{-1}=h^{-1}x$. By Schur's lemma (Lemma~\ref{lem:schur}), there exists $C_\lambda(y)$ such that $A_y=C_\lambda(y)I_{\mathcal{H}_\lambda}$. Let us first compute $\Tr(A_y)=C_\lambda(y)d_\lambda$:
\[
\Tr(A_y)=\int_G \chi_\lambda(gyg^{-1})dg=\chi_\lambda(y),
\]
which yields $C_\lambda(y)=\chi_\lambda(y)/d_\lambda$. Then, let us compute $\Tr(\rho_\lambda(x)A_y))=\int_G\chi_\lambda(xgyg^{-1})dg$:
\[
\Tr(\rho_\lambda(x)A_y)=C_\lambda(y)\Tr(\rho_\lambda(x))=C_\lambda(y)\chi_\lambda(x)=\frac{\chi_\lambda(x)\chi_\lambda(y)}{d_\lambda}.
\]
We have finally proved the proposition.
\end{proof}

\begin{proposition}
Let $G$ be a compact group and $\lambda,\mu\in\widehat{G}$ two equivalence classes of irreducible representations.
\begin{equation}
\chi_\lambda*\chi_\mu = \frac{\chi_\lambda}{d_\lambda}\delta_{\lambda,\mu}.
\end{equation}
\end{proposition}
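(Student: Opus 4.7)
The plan is to compute the convolution directly by expanding both characters in terms of matrix coefficients of fixed irreducible unitary representatives $(\rho_\lambda,\mathcal{H}_\lambda)$ and $(\rho_\mu,\mathcal{H}_\mu)$, and then apply the Schur orthogonality relations already recorded as Theorem~\ref{thm:orth_schur1} and Theorem~\ref{thm:orth_schur2}.

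First I would unfold the definition:
\[
(\chi_\lambda * \chi_\mu)(x) = \int_G \chi_\lambda(y)\,\chi_\mu(y^{-1}x)\,dy.
\]
Fixing orthonormal bases of $\mathcal{H}_\lambda$ and $\mathcal{H}_\mu$, I would write $\chi_\lambda(y) = \sum_k (\rho_\lambda)_{kk}(y)$ and, using unitarity of $\rho_\mu$ (so that $\rho_\mu(y^{-1})_{ij}=\overline{(\rho_\mu)_{ji}(y)}$) together with $\rho_\mu(y^{-1}x)=\rho_\mu(y^{-1})\rho_\mu(x)$, expand
\[
\chi_\mu(y^{-1}x) = \sum_{i,j} \overline{(\rho_\mu)_{ji}(y)}\,(\rho_\mu)_{ji}(x).
\]
Substituting and exchanging sum and integral yields
\[
(\chi_\lambda * \chi_\mu)(x) = \sum_{k,i,j} (\rho_\mu)_{ji}(x)\int_G (\rho_\lambda)_{kk}(y)\,\overline{(\rho_\mu)_{ji}(y)}\,dy.
\]

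Second, I would invoke the orthogonality relations. If $\lambda\neq\mu$, Theorem~\ref{thm:orth_schur2} forces each integral to vanish, giving the Kronecker factor $\delta_{\lambda,\mu}$. If $\lambda=\mu$, Theorem~\ref{thm:orth_schur1} (in the form \eqref{eq:orth_schur2}) gives
\[
\int_G (\rho_\lambda)_{kk}(y)\,\overline{(\rho_\lambda)_{ji}(y)}\,dy = \frac{1}{d_\lambda}\delta_{kj}\delta_{ki},
\]
so only the diagonal terms $i=j=k$ survive and
\[
(\chi_\lambda * \chi_\lambda)(x) = \frac{1}{d_\lambda}\sum_k (\rho_\lambda)_{kk}(x) = \frac{\chi_\lambda(x)}{d_\lambda}.
\]
Combining the two cases yields the claimed identity.

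There is no serious obstacle here beyond bookkeeping of indices; the only mild subtlety is being careful that the $L^2$-orthogonality in Theorem~\ref{thm:orth_schur1} is stated with the inner product $\int f\bar g$, which matches the conjugate that appears once $\rho_\mu(y^{-1})$ is rewritten via unitarity. An alternative, essentially equivalent route would be to observe that $\chi_\lambda*\chi_\mu$ is a central function (the computation $F(gxg^{-1})=F(x)$ uses only the centrality of $\chi_\mu$ and invariance of Haar measure), so by Theorem~\ref{thm:Peter-Weyl2} it expands on the basis $(\chi_\nu)_{\nu\in\widehat G}$; the Fourier coefficients $\langle \chi_\lambda*\chi_\mu,\chi_\nu\rangle$ then reduce, again via Schur's relations, to the same $\delta_{\lambda,\mu}\delta_{\lambda,\nu}/d_\lambda$. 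I would favor the direct matrix-coefficient computation above since it is the shortest path from the theorems already established in the paper.
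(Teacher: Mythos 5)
Your proposal is correct and follows essentially the same route as the paper's own proof: both expand the characters into matrix coefficients of unitary representatives and conclude via the Schur orthogonality relations (Theorems~\ref{thm:orth_schur1} and~\ref{thm:orth_schur2}), differing only in which variable the convolution is unfolded in. No gaps.
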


\begin{proof}
Let $(\rho_\lambda,\mathcal{H}_\lambda)$ be a unitary representation in the class $\lambda$. Denote by $\rho_\lambda^{ij}$ the matrix coefficients of the representations, i.e.
\[
\rho_\lambda^{ij}(x)=\langle \rho_\lambda(x) e_i,e_j\rangle,\quad \forall 1\leq i,j\leq d_\lambda.
\]
We have
\[
\chi_\lambda(xy) = \Tr(\rho_\lambda(xy))=\Tr(\rho_\lambda(x)\rho_\lambda(y)) = \sum_{i,j=1}^{d_\lambda} \rho_\lambda^{ij}(x)\rho_\lambda^{ji}(y).
\]
Using the same trick for a representation $\rho_\mu$ of the class $\mu$, we get
\[
\chi_\lambda(xy)\chi_\mu(y^{-1}) = \chi_\lambda(xy)\overline{\chi_\mu(y)} = \sum_{i=1}^{d_\lambda}\sum_{j=1}^{d_\lambda}\sum_{k=1}^{d_\mu} \rho_\lambda^{ij}(x)\rho_\lambda^{ji}(y)\overline{\rho_\mu^{kk}(y)}.
\]
Integrating with respect to $y$ yields
\[
\sum_{i,j,k}\rho_\lambda^{ij}(x)\int_G\rho_\lambda^{ji}(y)\rho_\mu^{kk}(y^{-1})dy=\sum_{i,j,k}\rho_\lambda^{ij}(x)\int_G\rho_\lambda^{ji}(y)\overline{\rho_\mu^{kk}(y)}dy,
\]
and Schur's orthogonality relation yields
\[
\chi_\lambda*\chi_\mu(x) = \frac{1}{d_\lambda} \sum_{i=1}^{d_\lambda}\rho_\lambda^{ii}(x)\delta_{\lambda,\mu},
\]
which is the expected result.
\end{proof}

\begin{remark}
The previous proposition gives the structure constants of $Z^2(G)$ as a convolution algebra, if we recall that it is the $L^2$-closure of the convolution algebra generated by the irreducible characters.
\end{remark}

The main integration result that we will need is the following.

\begin{proposition}\label{prop:int_commu}
Let $G$ be a compact group, and $\lambda\in\widehat{G}$ be an equivalence class of irreducible representations. For any $g\geq 1,$
\begin{equation}
\int_{G^{2g}}\chi_\lambda([x_1,y_1]\cdots[x_g,y_g])\prod_{i=1}^gdx_idy_i = d_\lambda^{1-2g}.
\end{equation}
\end{proposition}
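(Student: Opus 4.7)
The plan is to proceed by induction on $g\geq 1$, using the two propositions just proved (the orbit integral formula and the convolution formula $\chi_\lambda*\chi_\mu = \delta_{\lambda,\mu}\chi_\lambda/d_\lambda$), together with cyclic invariance of the trace/character. No new hard input should be needed; the whole proof is an iterative peeling-off of commutators.

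For the base case $g=1$, I would first use cyclicity to rewrite
\[
\chi_\lambda([x,y]) = \chi_\lambda(xyx^{-1}y^{-1}) = \chi_\lambda(y^{-1}\cdot x\cdot y\cdot x^{-1}).
\]
Then I apply Proposition~\ref{prop:orbite_adjointe} with the roles of $(x,y,g)$ played by $(y^{-1},y,x)$ to integrate out $x$, obtaining $\chi_\lambda(y^{-1})\chi_\lambda(y)/d_\lambda$. Integrating over $y$ using $\chi_\lambda(y^{-1})=\overline{\chi_\lambda(y)}$ and Schur orthogonality (Theorem~\ref{thm:orth_schur1}), I get $1/d_\lambda = d_\lambda^{1-2\cdot 1}$, which settles the base case.

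For the inductive step, assuming the formula at rank $g-1$, I would integrate over the last pair $(x_g,y_g)$ first. Setting $A=[x_1,y_1]\cdots[x_{g-1},y_{g-1}]$, cyclicity gives
\[
\chi_\lambda(A\, x_g y_g x_g^{-1}y_g^{-1}) = \chi_\lambda\bigl((y_g^{-1}A)\, x_g\, y_g\, x_g^{-1}\bigr),
\]
so Proposition~\ref{prop:orbite_adjointe} integrated in $x_g$ yields $\chi_\lambda(y_g^{-1}A)\chi_\lambda(y_g)/d_\lambda$. The remaining integral in $y_g$ is, up to the factor $1/d_\lambda$, exactly the convolution $(\chi_\lambda*\chi_\lambda)(A)$, which equals $\chi_\lambda(A)/d_\lambda$ by the preceding proposition. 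Thus integrating out $(x_g,y_g)$ replaces $\chi_\lambda([x_1,y_1]\cdots[x_g,y_g])$ by $\chi_\lambda(A)/d_\lambda^2$, and the induction hypothesis applied to the remaining $g-1$ commutators gives $d_\lambda^{1-2(g-1)}/d_\lambda^2 = d_\lambda^{1-2g}$, as required.

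The only thing to be careful about is the very first rewriting by cyclicity, which sets up the integrand in exactly the form required by Proposition~\ref{prop:orbite_adjointe}; beyond that, each step is a direct application of a previously established identity. There is no genuine obstacle, but one must be attentive to the placement of $y_g^{-1}$ versus $y_g$ so that the subsequent convolution is indeed $\chi_\lambda*\chi_\lambda$ (with the correct inversion convention $f*g(x)=\int f(y)g(y^{-1}x)dy$ used in the paper) rather than an unrelated integral. Once combined with Lemma~\ref{lem:fourier_pf} this immediately yields Theorem~\ref{thm:fourier_pf}.
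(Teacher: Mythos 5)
Your proposal is correct and follows essentially the same route as the paper: induction on $g$, with the base case handled by Proposition~\ref{prop:orbite_adjointe} plus Schur orthogonality, and the inductive step by peeling off the last commutator via Proposition~\ref{prop:orbite_adjointe} and then recognizing the remaining integral as the convolution $\chi_\lambda*\chi_\lambda$. The only (immaterial) difference is that you integrate out $x_g$ first after a cyclic rearrangement, whereas the paper integrates out $y_g$ directly; note also that your base-case value $1/d_\lambda=d_\lambda^{1-2}$ is the correct one (the paper's displayed ``$=1$'' there is a typo).
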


\begin{proof}
We proceed by induction on $g$. The case $g=1$ is a consequence of Proposition~\ref{prop:orbite_adjointe} applied to $x=x_1$ and $y=x_1^{-1}$:
\[
\int_{G^2}\chi_\lambda(x_1y_1x_1^{-1}y_1^{-1})dx_1dy_1 = \int_G \frac{\chi_\lambda(x_1)\chi_\lambda(x_1^{-1})}{d_\lambda} = 1,
\]
where the last equality follows from orthogonality relations. Assume that the equality holds for a fixed $g$. Then
\begin{align*}
\int_{G^{2g+2}}\chi_\lambda([x_1,y_1]&\cdots[x_g,y_g][x,y])\prod_{i=1}^gdx_idy_i dxdy =\\
&\frac{1}{d_\lambda} \int_{G^{2g+1}} \chi_\lambda([x_1,y_1]\cdots[x_g,y_g]x)\chi_\lambda(x^{-1})\prod_{i=1}^g dx_idy_idx.
\end{align*}
The RHS can be rewritten
\begin{align*}
\frac{1}{d_\lambda}\int_{G^{2g}}(\chi_\lambda*\chi_\lambda)&([x_1,y_1]\cdots[x_g,y_g])\prod_{i=1}^g dx_idy_i=\\
& \frac{1}{d_\lambda^2}\int_{G^{2g}}\chi_\lambda([x_1,y_1]\cdots[x_g,y_g])\prod_{i=1}^g dx_idy_i,
\end{align*}
by using the formula of convolution of characters.
\end{proof}

\begin{proof}[Proof of Theorem~\ref{thm:fourier_pf}]
We simply combine Lemma~\ref{lem:fourier_pf} and Proposition~\ref{prop:int_commu}.
\end{proof}

\section{Large-$N$ limit of the partition function}\label{sec:SchurWeyl}

We can now specialize to the case of the unitary group
\[
G=\U(N)=\{U\in\GL(N,\C): U^*=U^{-1}\},
\]
whose Lie algebra is the space of skew-Hermitian matrices
\[
\mathfrak{u}(N)=\{X\in\mathcal{M}_N(\C): X^*=-X\}=i\mathcal{H}_N,
\]
where $\mathcal{H}_N=\{X\in\mathcal{M}_N(\C): X^*=X\}$ is the space of Hermitian matrices, on which is notably defined the Gaussian Unitary Ensemble. We will endow $\mathfrak{u}(N)$ with the following inner product:
\[
\langle X,Y\rangle = N\Tr(XY^*) = -N\Tr(XY).
\]
We will first recall the expressions of relevant quantities, such as the dimension and Casimir of irreducible representations, and then we will provide results about the asymptotics of the partition function.

\subsection{Highest weight theory for $\U(N)$}

In this subsection, we gather standard results of the representation theory of unitary groups in order to get explicit formulas for the character, dimension and Casimir of all irreducible representations. Good references are \cite{BtD,Far} for instance. Note that we need to specify an inner product on $\mathfrak{u}(N)$ in order to give an expression of the Casimir, because it is an eigenvalue of $-\Delta_{\U(N)}$, which depends on a choice of a metric on $G$.

\begin{theorem}[Weyl's character formula]\label{thm:Weyl_char}
The irreducible representations of $\U(N)$ (resp. $\SU(N)$) are in bijection with the $N$-tuples $(\lambda_1,\ldots,\lambda_N)\in\Z^N$ such that $\lambda_1\geq\ldots\geq\lambda_N$ (resp. $\lambda_1\geq\ldots\geq\lambda_N=0$), called \emph{highest weights}. Moreover, for any irreducible representation of highest weight $\lambda$ and any $U\in\U(N)$ or $\SU(N)$ with eigenvalues $(x_1,\ldots,x_N)$,
\begin{equation}
\chi_\lambda(U)=s_\lambda(x_1,\ldots,x_N),
\end{equation}
where $s_\lambda$ is the \emph{Schur function} associated to $\lambda$:
\[
s_\lambda(x_1,\ldots,x_N)=\frac{\det\left[x_i^{\lambda_j+N-j}\right]_{1\leq i,j\leq N}}{\det\left[x_i^{N-j}\right]_{1\leq i,j\leq N}}.
\]
\end{theorem}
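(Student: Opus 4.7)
The plan is to invoke the classical highest-weight theorem and the Weyl character formula from the representation theory of compact connected Lie groups, and then verify that these abstract formulae specialise to Schur functions on $\U(N)$. Since the statement is standard and this is a survey, I will treat those two results as black boxes with citations to \cite{BtD,Far}, and focus on the combinatorial identification.

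First, I would set up the relevant data for $G=\U(N)$. The maximal torus $T$ consists of diagonal unitary matrices $t=\mathrm{diag}(e^{i\theta_1},\ldots,e^{i\theta_N})$, so its character lattice is $\Z^N$. Choosing the Borel subgroup of upper-triangular matrices, the positive roots are $\theta_i-\theta_j$ for $i<j$, and the half-sum is $\rho=(N-1,N-2,\ldots,0)$. A weight $\lambda=(\lambda_1,\ldots,\lambda_N)\in\Z^N$ is dominant precisely when $\lambda_1\geq\ldots\geq\lambda_N$. The theorem of the highest weight then yields the bijection between $\widehat{\U(N)}$ and the set of dominant weights. For $\SU(N)$, the maximal torus is cut by $\sum_i\theta_i\in 2\pi\Z$, so that two weights differing by a constant vector $(c,\ldots,c)$ define the same torus character; normalising $\lambda_N=0$ gives a canonical representative, which explains the extra condition in the statement.

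Second, I would apply the Weyl character formula: for each dominant $\lambda$ and $t\in T$,
\[
\chi_\lambda(t)=\frac{A_{\lambda+\rho}(t)}{A_\rho(t)},\qquad A_\mu(t)=\sum_{w\in W}\mathrm{sgn}(w)\,e^{(w\mu)(t)},
\]
with Weyl group $W=S_N$ acting by permutation of coordinates. Setting $x_i=e^{i\theta_i}$, the antisymmetrisation is a Leibniz determinant,
\[
A_{\lambda+\rho}(t)=\sum_{\sigma\in S_N}\mathrm{sgn}(\sigma)\prod_{i=1}^N x_{\sigma(i)}^{\lambda_i+N-i}=\det\bigl[x_i^{\lambda_j+N-j}\bigr]_{1\leq i,j\leq N},
\]
and the specialisation $\lambda=0$ gives $A_\rho(t)=\det[x_i^{N-j}]=\prod_{i<j}(x_i-x_j)$, the Vandermonde determinant. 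The ratio is by definition the Schur function $s_\lambda(x_1,\ldots,x_N)$. Finally, since $\chi_\lambda$ is a class function and every element of $\U(N)$ (resp.\ $\SU(N)$) is conjugate to an element of $T$, the identity extends from $T$ to the whole group, and the answer depends only on the unordered multiset of eigenvalues since $s_\lambda$ is symmetric.

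The main obstacle is the Weyl character formula itself, whose proof requires either developing the Weyl integration formula together with orthonormality of characters (producing the $A_{\lambda+\rho}/A_\rho$ as an orthonormal basis of $W$-invariant functions with the correct leading term), or the Borel--Weil realisation of irreducible representations as sections of holomorphic line bundles on $\U(N)/T$. Neither is in scope for a survey, so the natural strategy is to cite \cite{BtD,Far} and restrict the explicit work to recognising the determinantal antisymmetrisation as a Schur function, which is a one-line manipulation once the Weyl formula is granted.
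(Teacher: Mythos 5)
Your proposal is correct, and it matches the paper's treatment: the survey states this theorem as standard background with references to \cite{BtD,Far} and gives no proof, which is exactly the strategy you adopt (cite the highest-weight theorem and the abstract Weyl character formula, then identify the antisymmetrised ratio with the Schur function via the Leibniz expansion and the Vandermonde determinant). The only cosmetic point is that your $\rho=(N-1,N-2,\ldots,0)$ is the half-sum of positive roots shifted by the central vector $\tfrac{N-1}{2}(1,\ldots,1)$; since a constant shift multiplies numerator and denominator of $A_{\lambda+\rho}/A_\rho$ by the same factor $\prod_i x_i^{(N-1)/2}$, the resulting formula is unaffected.
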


An example of highest weight of $\U(N)$ is given in Figure~\ref{fig:highest_weight}.

\begin{figure}[b!]
    \centering
    \includegraphics[width=0.5\linewidth]{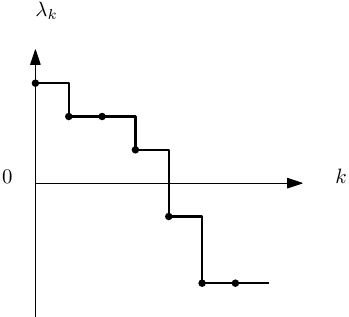}
    \caption{The highest weight $\lambda=(3,2,2,1,-1,-3,-3)$ of $\U(7)$.}
    \label{fig:highest_weight}
\end{figure}

\begin{remark}
These Schur functions are symmetric Laurent polynomials for $\U(N)$ and symmetric polynomials for $\SU(N)$.
\end{remark}

\begin{corollary}[Weyl's dimension formula]\label{thm:Weyl_dim}
Let $\lambda\in\widehat{\U}(N)$ (or $\widehat{\SU}(N)$). Its dimension satisfies
\begin{equation}
d_\lambda=\prod_{1\leq i<j\leq N}\frac{\lambda_i-\lambda_j+j-i}{j-i}.
\end{equation}
\end{corollary}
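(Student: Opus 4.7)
My approach is to derive Weyl's dimension formula directly from Weyl's character formula (Theorem~\ref{thm:Weyl_char}), using the elementary fact that for any finite-dimensional representation $(\rho,V_\lambda)$ of class $\lambda$,
\[
d_\lambda = \dim V_\lambda = \Tr(\rho(I_N)) = \chi_\lambda(I_N).
\]
Since the eigenvalues of $I_N$ are all equal to $1$, the problem reduces to computing the principal specialization $s_\lambda(1,\ldots,1)$.

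The immediate obstacle is that the ratio
\[
s_\lambda(x_1,\ldots,x_N) = \frac{\det[x_i^{\lambda_j+N-j}]}{\det[x_i^{N-j}]}
\]
is of indeterminate form $0/0$ at $(1,\ldots,1)$: both determinants are antisymmetric in the $x_i$ and hence vanish whenever two coordinates coincide. To regularize, I would evaluate along the curve $x_i = q^{N-i}$ and let $q \to 1$ at the end. With this substitution, setting $a_j = \lambda_j + N - j$, the numerator becomes $\det\bigl[(q^{a_j})^{N-i}\bigr]_{1 \le i,j \le N}$, which is a genuine Vandermonde determinant in the $q^{a_j}$ and therefore equals $\varepsilon \prod_{i<j}(q^{a_i} - q^{a_j})$ for some sign $\varepsilon \in \{\pm 1\}$ depending only on $N$. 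Specializing the same identity to $\lambda = 0$ gives the denominator, with the same sign $\varepsilon$, so the signs cancel in the ratio.

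The final step is the limit $q \to 1$, handled factor by factor. Using $q^a - q^b = (a-b)(q-1) + O((q-1)^2)$ for integers $a, b$, each of the $\binom{N}{2}$ factors in the numerator and denominator carries a single power of $(q-1)$; these prefactors cancel and one obtains
\[
s_\lambda(1,\ldots,1) = \prod_{1 \le i<j \le N} \frac{a_i - a_j}{(N-i)-(N-j)} = \prod_{1 \le i<j \le N} \frac{\lambda_i - \lambda_j + j - i}{j - i},
\]
which is exactly the stated formula. The same computation covers $\SU(N)$, whose irreducible representations correspond to $\U(N)$ highest weights normalized by $\lambda_N = 0$; Theorem~\ref{thm:Weyl_char} gives the character by the same Schur polynomial, and the evaluation at $(1,\ldots,1)$ is identical. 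I expect the only real subtlety to be the choice of regularization that resolves the $0/0$; beyond that, the argument is pure Vandermonde bookkeeping.
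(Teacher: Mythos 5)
Your proposal is correct and follows exactly the route the paper intends: it states the dimension formula as a corollary of Weyl's character formula (Theorem~\ref{thm:Weyl_char}) without writing out the details, and the intended argument is precisely $d_\lambda=\chi_\lambda(I_N)=s_\lambda(1,\ldots,1)$ computed by the principal specialization $x_i=q^{N-i}$, Vandermonde factorization, and the limit $q\to 1$. Your handling of the $0/0$ regularization, the sign cancellation, and the $\U(N)$ case with possibly negative exponents (where $q^{a_j}$ remains a legitimate Vandermonde variable) is all sound.
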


\begin{theorem}
Assume that $\mathfrak{u}(N)$ is endowed with the inner product $\langle X,Y\rangle=N\Tr(X^*Y)$. For any $\lambda\in\widehat{\U}(N)$,
\begin{equation}
c_2(\lambda)=\frac{1}{N}\left(\sum_{i=1}^N\lambda_i^2 + \sum_{1\leq i<j\leq N}(\lambda_i-\lambda_j)\right).
\end{equation}
\end{theorem}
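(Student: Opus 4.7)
The plan is to compute the scalar by which the Casimir element acts on an irreducible representation of highest weight $\lambda$, and then reconcile this with the definition of $c_2(\lambda)$ as an eigenvalue of $-\Delta_{\U(N)}$. Write $\Delta_G = \sum_{i=1}^{d} X_i^2$ as a left-invariant differential operator for an orthonormal basis $\{X_i\}$ of $\un$, and let $C = \sum_i X_i^2 \in U(\un)_\C$ be the associated Casimir element. For a unitary irreducible representation $(\rho_\lambda, V_\lambda)$, the operator $d\rho_\lambda(C)$ is $G$-equivariant, hence Schur's lemma forces $d\rho_\lambda(C) = \alpha_\lambda \Id_{V_\lambda}$; differentiating the identity $\chi_\lambda(g) = \Tr(\rho_\lambda(g))$ twice yields $\Delta_G \chi_\lambda = \alpha_\lambda \chi_\lambda$, so $\alpha_\lambda = -c_2(\lambda)$. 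The whole proof thus reduces to evaluating $d\rho_\lambda(C)$ on a highest weight vector.

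The next step is to rewrite $C$ in a basis adapted to computations with highest weights. The key observation is that the tensor $\sum_i X_i \otimes X_i \in \mathfrak{g}_\C \otimes \mathfrak{g}_\C$ associated to an orthonormal basis equals $\sum_\alpha Y_\alpha \otimes Y^\alpha$ for any pair of bases $(Y_\alpha), (Y^\alpha)$ that are dual with respect to the $\C$-bilinear extension of $\langle \cdot,\cdot\rangle$ to $\mathfrak{gl}_N(\C)$. Taking $Y_\alpha$ to be the matrix units $E_{ab}$, the relation $\langle E_{ab}, E_{cd}\rangle = -N\delta_{bc}\delta_{ad}$ gives the dual basis $E^{ab} = -\tfrac{1}{N} E_{ba}$, hence
\[
C \;=\; -\frac{1}{N}\sum_{a,b=1}^N E_{ab} E_{ba} \qquad \text{in } U(\mathfrak{gl}_N(\C)).
\]

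Finally, pick a highest weight vector $v_\lambda \in V_\lambda$, characterized by $E_{ab} v_\lambda = 0$ for $a < b$ and $E_{aa} v_\lambda = \lambda_a v_\lambda$. The diagonal part contributes $\sum_{a} E_{aa}^2 v_\lambda = \big(\sum_a \lambda_a^2\big) v_\lambda$. For the off-diagonal part, group the summands in pairs $(a,b)$ and $(b,a)$ with $a<b$ and use $[E_{ab}, E_{ba}] = E_{aa} - E_{bb}$ to write
\[
E_{ab} E_{ba} v_\lambda \;=\; E_{ba} E_{ab} v_\lambda + (\lambda_a - \lambda_b) v_\lambda \;=\; (\lambda_a - \lambda_b) v_\lambda,
\]
while $E_{ba} E_{ab} v_\lambda = 0$. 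Summing,
\[
\sum_{a,b} E_{ab} E_{ba}\, v_\lambda \;=\; \Bigl(\sum_a \lambda_a^2 + \sum_{a<b}(\lambda_a - \lambda_b)\Bigr) v_\lambda,
\]
so equating $d\rho_\lambda(C) v_\lambda = -c_2(\lambda) v_\lambda$ gives the announced formula.

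The one delicate point, and really the only place a mistake can slip in, is bookkeeping: the Casimir is defined via an orthonormal basis of the real form $\un$, but the cleanest computation takes place in the complexification $\mathfrak{gl}_N(\C)$ where the natural basis $\{E_{ab}\}$ is \emph{not} orthonormal; checking that the factor $-\tfrac{1}{N}$ coming from the dual basis combines with the sign in $\langle X,Y\rangle = -N\Tr(XY)$ to produce the expected $\tfrac{1}{N}$ prefactor (and in particular the correct positivity $c_2(\lambda) \geq 0$) is exactly what pins down the normalization.
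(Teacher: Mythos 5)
Your argument is correct. Note that the paper itself offers no proof of this formula: it is stated as a standard fact with a pointer to the references on representation theory of compact groups, so there is nothing to compare against. What you give is the standard derivation, and the bookkeeping all checks out: the $\C$-bilinear extension $B(X,Y)=-N\Tr(XY)$ of the inner product to $\mathfrak{gl}_N(\C)$ makes an orthonormal basis of $\mathfrak{u}(N)$ self-dual, so the Casimir tensor may be computed with the matrix units, giving $C=-\tfrac1N\sum_{a,b}E_{ab}E_{ba}$; the highest-weight computation then yields $d\rho_\lambda(C)=-\tfrac1N\bigl(\sum_a\lambda_a^2+\sum_{a<b}(\lambda_a-\lambda_b)\bigr)\Id$, and since $\Delta_G\chi_\lambda=\Tr(\rho_\lambda(\cdot)\,d\rho_\lambda(C))$, this is $-c_2(\lambda)$, consistent with $c_2(\lambda)\geq 0$ for $\lambda_1\geq\cdots\geq\lambda_N$ and with the sanity checks $c_2(n)=n^2$ for $\U(1)$ and $c_2(1,0,\ldots,0)=1$.
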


Although we will not need it here, the Casimirs of irreducible representations of $\SU(N)$ are slightly different:
\[
c_2(\lambda)=\frac{1}{N}\left(\sum_{i=1}^N\lambda_i^2 + \sum_{1\leq i<j\leq N}(\lambda_i-\lambda_j)\right)-\frac{1}{N^2}\left(\sum_{i=1}^N\lambda_i\right)^2,\quad \forall \lambda\in\widehat{\SU}(N).
\]

\subsection{Genus $\geq 2$: Witten zeta functions}

If $G$ is a compact connected simply connected group, we define its \emph{Witten zeta function} as the meromorphic continuation of the series
\[
\zeta_G(s)=\sum_{\lambda\in\widehat{G}}d_\lambda^{-s},
\]
for all $s\in\C$. This name has been chosen by Zagier \cite{Zag92} because the function appears in the work of Witten \cite{Wit91}, and because it generalizes the Riemann zeta function: indeed, for any $s\in\C$,
\[
\zeta_{\SU(2)}(s)=\sum_{n\geq 1}\frac{1}{n^s}=\zeta(s).
\]
For the sake of simplicity, we will take it for granted that, for any $s>1$, 
\[
\sup_N\zeta_{\SU(N)}(s)<\infty,
\]
(see \cite[Prop. 2.2]{Lem} for a proof). The main result we care about is the following:

\begin{lemma}\label{lem:Zeta}
For all $s>1$,
\begin{equation}
\lim_{N\to\infty}\left\vert \zeta_{\SU(N)}(s)-1\right\vert = 0.
\end{equation}
\end{lemma}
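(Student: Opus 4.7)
The plan is to single out the trivial representation, whose contribution to $\zeta_{\SU(N)}(s)$ equals $1$, and then control what is left using two ingredients: a uniform lower bound on the dimension of \emph{non-trivial} irreducible representations of $\SU(N)$, and the admitted estimate $\sup_{M}\zeta_{\SU(M)}(t)<\infty$ for every $t>1$. The trivial highest weight $\lambda=(0,\dots,0)$ is the unique element of $\widehat{\SU}(N)$ with $d_\lambda=1$, so
\[
\zeta_{\SU(N)}(s)-1=\sum_{\lambda\in\widehat{\SU}(N)\setminus\{0\}}d_\lambda^{-s},
\]
and the goal is to show that this tail vanishes as $N\to\infty$.

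The key step, which I expect to be the main obstacle, is the dimension bound $d_\lambda\geq N$ for every non-trivial $\lambda\in\widehat{\SU}(N)$. Given such a $\lambda=(\lambda_1\geq\cdots\geq\lambda_N=0)$, set $k=\max\{i:\lambda_i>0\}\in\{1,\dots,N-1\}$ and partition the factors of Weyl's dimension formula (Corollary~\ref{thm:Weyl_dim}) according to whether $j\leq k$ or $j>k$. The factors with $i<j\leq k$ are all $\geq 1$ because $\lambda_i\geq\lambda_j$, the factors with $k<i<j$ are equal to $1$ since $\lambda_i=\lambda_j=0$, and for $i\leq k<j$ one has $\lambda_j=0$ and $\lambda_i\geq 1$, hence
\[
d_\lambda\;\geq\;\prod_{i=1}^{k}\prod_{j=k+1}^{N}\frac{1+j-i}{j-i}.
\]
A telescoping in $j$ collapses the inner product to $(N-i+1)/(k-i+1)$, and the product over $i$ then yields $d_\lambda\geq\binom{N}{k}\geq N$, the last inequality being sharp at $k=1$ and $k=N-1$.

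With this bound in hand, the conclusion is essentially a H\"older-type estimate. Pick any $t\in(1,s)$ and set $C:=\sup_{M\geq 1}\zeta_{\SU(M)}(t)$, which is finite by the admitted uniform bound. For every non-trivial $\lambda\in\widehat{\SU}(N)$, write $d_\lambda^{-s}=d_\lambda^{-(s-t)}d_\lambda^{-t}\leq N^{-(s-t)}d_\lambda^{-t}$ and sum:
\[
0\;\leq\;\zeta_{\SU(N)}(s)-1\;\leq\;N^{-(s-t)}\sum_{\lambda\in\widehat{\SU}(N)}d_\lambda^{-t}\;\leq\;C\,N^{-(s-t)},
\]
which tends to $0$ since $s-t>0$, giving the claim. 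Once the dimension bound is established, nothing in the remainder of the proof is more than elementary.
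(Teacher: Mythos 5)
Your proposal is correct and follows essentially the same route as the paper: isolate the trivial representation, establish the lower bound $d_\lambda\geq N$ for every non-trivial $\lambda\in\widehat{\SU}(N)$, and then combine $d_\lambda^{-s}\leq N^{-(s-t)}d_\lambda^{-t}$ with the admitted uniform bound $\sup_M\zeta_{\SU(M)}(t)<\infty$. The only (cosmetic) difference is in the dimension bound: the paper argues by monotonicity of Weyl's formula in the gap variables $m_i=\lambda_i-\lambda_{i+1}+1$ to reduce to the fundamental weights, whereas you bound the Weyl product factor by factor and telescope to get $d_\lambda\geq\binom{N}{k}$ directly --- both yield the same estimate.
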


\begin{proof}
Let us start by using Weyl's dimension formula (Theorem~\ref{thm:Weyl_dim}), which directly yields that the irreducible representation of highest weight $\lambda=(0,\ldots,0)$ has dimension 1. Now assume that $\lambda\neq(0,\ldots,0)$; we perform a change of variables
\begin{align*}
 m_1 & =\lambda_1-\lambda_2+1,\\
 m_2 & = \lambda_2-\lambda_3+1,\\
 \vdots & \\
 m_{N-1} & = \lambda_{N-1}-\lambda_N+1.
\end{align*}
The condition $(\lambda_1\geq\ldots\geq\lambda_N=0)$ is equivalent to $m_1,\ldots,m_{N-1}\geq 0$. We get
\[
d_\lambda=\prod_{1\leq i<j\leq N} \frac{m_i+\ldots+m_{j-1}}{j-i}.
\]
This expression is clearly nondecreasing in the coefficients $m_i$, therefore the smallest possible values of $d_\lambda$ are those of $\lambda_1=(1,0,\ldots,0)$, $\lambda_2=(1,1,0,\ldots,0)$, $\ldots$, $\lambda_{N-1}=(1,\ldots,1,0)$. In other terms,
\[
\inf\left\lbrace d_\lambda, \lambda\in\widehat{\SU}(N)\setminus\{(0,\ldots,0)\}\right\rbrace\geq \inf_{1\leq i\leq N-1}d_{\lambda_i} = \inf_{1\leq i\leq N-1}\binom{N}{i}=N.
\]
Thus, for any nontrivial highest weight $\lambda$, we find that $d_\lambda\geq N$. From this point the proof becomes elementary: for any $1<s'<s$,
\[
\zeta_{\SU(N)}(s)-1=\sum_{\substack{\lambda\in\widehat{\SU}(N)\\ \lambda\neq(0,\ldots,0)}}d_\lambda^{-s} = \sum_{\substack{\lambda\in\widehat{\SU}(N)\\ \lambda\neq(0,\ldots,0)}} d_\lambda^{-s'-(s-s')}\leq N^{-(s-s')}\sum_{\substack{\lambda\in\widehat{\SU}(N)\\ \lambda\neq(0,\ldots,0)}} d_\lambda^{-s'},
\]
and the RHS converges to 0 as $N$ tends to infinity.
\end{proof}

The previous lemma, which has many consequences, was proved for a large class of groups by Guralnick--Larsen--Manack \cite{GLM} and rediscovered by the author later. It will imply the convergence of the partition function $Z_{\U(N)}(g,t)$ for $g\geq 2$. Before giving the details, let $\theta:\C\to\C$ be the Jacobi theta function defined by the series
\[
\theta(z)=\sum_{n\in\Z} z^{n^2}.
\]
Note that it is in particular holomorphic on $\{z\in\C:\vert z\vert<1\}$. The asymptotics of partition functions on genus $g\geq 2$ are the following, as first stated in \cite{Rus} and proved in \cite{Lem}.

\begin{theorem}
For any $g\geq 2$ and $t>0$, we have the following convergence:
\begin{equation}
\lim_{N\to\infty}Z_{\U(N)}(g,t)=\theta(q_t),
\end{equation}
by setting $q_t=e^{-\frac{t}{2}}$.
\end{theorem}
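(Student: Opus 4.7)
The plan is to apply Migdal's character expansion from Theorem~\ref{thm:fourier_pf} and peel off the contribution of the one-dimensional representations. For $G=\U(N)$ it reads
\[
Z_{\U(N)}(g,t) = \sum_{\lambda\in\widehat{\U}(N)} e^{-\frac{t}{2}c_2(\lambda)} d_\lambda^{2-2g}.
\]
By Weyl's dimension formula (Corollary~\ref{thm:Weyl_dim}), $d_\lambda=1$ precisely for the constant weights $\lambda=k\mathbf{1}:=(k,\dots,k)$ with $k\in\Z$ (the $k$th power of the determinant character), and the $\U(N)$ Casimir formula then gives $c_2(k\mathbf{1})=k^2$. These weights thus contribute exactly $\sum_{k\in\Z} q_t^{k^2}=\theta(q_t)$, which is the target of the theorem. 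Everything therefore reduces to showing that the sum restricted to non-constant weights tends to $0$ as $N\to\infty$.

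To handle the remaining weights, I would write any $\lambda\in\widehat{\U}(N)$ uniquely as $\lambda=\mu+k\mathbf{1}$ with $\mu_N=0$ (so $\mu$ is an $\SU(N)$-highest weight) and $k=\lambda_N\in\Z$. Weyl's formula depends only on the differences $\lambda_i-\lambda_j$, so $d_\lambda=d_\mu$, and expanding the $\U(N)$-Casimir and completing the square in $k$ gives
\[
c_2(\lambda) = c_2^{\SU}(\mu) + \bigl(k+\tfrac{|\mu|}{N}\bigr)^{2},
\]
where $c_2^{\SU}(\mu)\geq 0$ is the $\SU(N)$-Casimir recorded just after the $\U(N)$ one. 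The bound $\sum_{k\in\Z} e^{-\frac{t}{2}(k+x)^{2}}\leq \theta(q_t)$ for every $x\in\R$ (a direct consequence of Poisson summation: the Fourier coefficients of this periodic sum are nonnegative and maximized at integer $x$) then yields
\[
\sum_{\substack{\lambda\in\widehat{\U}(N)\\ \lambda\neq k\mathbf{1}}} e^{-\frac{t}{2}c_2(\lambda)} d_\lambda^{2-2g} \;\leq\; \theta(q_t) \sum_{\substack{\mu\in\widehat{\SU}(N)\\ \mu\neq 0}} d_\mu^{2-2g} e^{-\frac{t}{2}c_2^{\SU}(\mu)} \;\leq\; \theta(q_t) \sum_{\mu\neq 0} d_\mu^{-(2g-2)}.
\]
Since $g\geq 2$ one has $2g-2\geq 2$, so I can pick $s\in(1,2g-2)$; the inequality $d_\mu\geq N$ valid for every non-trivial $\mu\in\widehat{\SU}(N)$ (established in the course of the proof of Lemma~\ref{lem:Zeta}) then gives
\[
\sum_{\mu\neq 0} d_\mu^{-(2g-2)} \leq N^{-(2g-2-s)}\bigl(\zeta_{\SU(N)}(s)-1\bigr) \xrightarrow[N\to\infty]{} 0,
\]
by the uniform bound $\sup_N \zeta_{\SU(N)}(s)<\infty$ for $s>1$ recalled before Lemma~\ref{lem:Zeta}.

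The main obstacle is the cross term $2k|\mu|/N$ that appears in the $\U(N)$-Casimir of $\mu+k\mathbf{1}$: naively it keeps the $k$-sum (which must reconstruct $\theta(q_t)$) entangled with the $\mu$-sum (which must be crushed by the Witten-zeta estimate). Completing the square decouples them, turning the cross term into a real shift of the theta-like sum over $k$, while the non-negativity of $c_2^{\SU}$ absorbs the residual $|\mu|^2/N^2$ correction in the exponential; everything else is bookkeeping based on Lemma~\ref{lem:Zeta}.
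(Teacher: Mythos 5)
Your proof is correct, and at the level of strategy it is the same as the paper's: decompose $\widehat{\U}(N)\simeq\widehat{\SU}(N)\times\Z$ via $\lambda=\mu+k\mathbf{1}$, observe $d_{\mu+k\mathbf{1}}=d_\mu$, isolate the determinant powers (which contribute exactly $\theta(q_t)$), and kill the remainder with the bound $d_\mu\geq N$ and the uniform finiteness of $\zeta_{\SU(N)}(s)$ for $s>1$, exactly as in Lemma~\ref{lem:Zeta} (indeed you could just invoke that lemma with $s=2g-2\geq 2$ rather than re-running its proof). The one place where you genuinely diverge is the estimate on the sum over the shift $k$. The paper uses the pointwise inequality $c_2(\mu+n\mathbf{1})\geq n^2$; as stated this actually fails for sufficiently negative $n$ when $\mu\neq 0$ (e.g.\ $\mu=(1,0,\dots,0)$ forces $N+2n\geq 0$), although the aggregate bound $\sum_{n\in\Z}q_t^{c_2(\mu+n\mathbf{1})}\leq\theta(q_t)$ that the argument really needs is still true. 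Your completion of the square,
\begin{equation*}
c_2(\mu+k\mathbf{1})=c_2^{\SU}(\mu)+\Bigl(k+\tfrac{|\mu|}{N}\Bigr)^{2},
\end{equation*}
combined with the Poisson-summation fact that $x\mapsto\sum_{k\in\Z}e^{-\frac{t}{2}(k+x)^2}$ is maximized at integer $x$, proves precisely that aggregate bound and so is the more robust route: it decouples the cross term $2k|\mu|/N$ cleanly and discards $c_2^{\SU}(\mu)\geq 0$ only at the end. In short, your argument is a slightly more careful implementation of the same proof.
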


\begin{proof}
We use the following bijection:
\[
\left\lbrace
\begin{array}{ccc}
\widehat{\SU}(N)\times\Z & \longrightarrow & \widehat{\U}(N)\\
(\lambda,n) & \longmapsto & \lambda+n:=(\lambda_1+n,\ldots,\lambda_N+n).
\end{array}
\right.
\]
It is easy to check that $d_{\lambda+n}=d_\lambda$ for all $\lambda\in\widehat{\SU}(N)$ and $n\in\Z$. Consequently,
\[
Z_{\U(N)}(g,t)=\sum_{\lambda\in\widehat{\SU}(N)}\sum_{n\in\Z}q_t^{c_2(\lambda+n)} d_{\lambda+n}^{2-2g}=\sum_{\lambda\in\widehat{\SU}(N)}\left(\sum_{n\in\Z}q_t^{c_2(\lambda+n)}\right)d_\lambda^{2-2g}.
\]
We separate the term $(0,\ldots,0)$ in the sum over $\lambda$, and notice that $c_2(n,\ldots,n)=n^2$:
\[
Z_{\U(N)}(g,t)=\sum_{n\in\Z}q_t^{n^2}+\sum_{\substack{\lambda\in\widehat{\SU}(N)\\ \lambda\neq(0,\ldots,0)}}\sum_{n\in\Z}q_t^{c_2(\lambda+n)}d_\lambda^{2-2g}.
\]
However,
\[
c_2(\lambda+n)=\frac1N\left(\sum_i (\lambda_i+n)^2 + \sum_{i<j}(\lambda_i-\lambda_j)\right)\geq n^2,
\]
given that $\lambda_1\geq\ldots\geq\lambda_N=0$. Finally,
\[
0\leq Z_{\U(N)}(g,t)-\theta(q_t) \leq \theta(q_t)\sum_{\substack{\lambda\in\widehat{\SU}(N)\\ \lambda\neq(0,\ldots,0)}}d_\lambda^{2-2g},
\]
and the RHS converges to zero by Lemma~\ref{lem:Zeta}, because $2g-2>1$ for any $g\geq 2$.
\end{proof}

\begin{remark}
The only highest weights of $\U(N)$ that contribute to the limit are the constant highest weights $(n,\ldots,n)$ with $n\in\Z$. If we remember that $\widehat{\U}(1)=\Z$ and we compute the dimension and Casimir of irreducible representations of $\U(1)$, we find that
\[
Z_{\U(1)}(g,t)=\theta(q_t),\quad \forall g\geq 0,\ \forall t>0.
\]
The previous theorem can thus be interpreted as follows: for all $g\geq 2$ and $t>0$,
\[
\lim_{N\to\infty}Z_{\U(N)}(g,t)=Z_{\U(1)}(g,t).
\]
The partition function becomes asymptotically the one of an abelian gauge theory, which is a phenomenon that one can also recover in another regime, where $N$ is fixed and we let $g\to\infty$ \cite{Lem3}. As we will discover in the next subsections, it is no longer the case if $g\leq 1$.
\end{remark}

\subsection{Genus 1: Discrete Gaussian measure in a cone and random partitions}

In the case $g=1$, we see that
\[
Z_{\U(N)}(1,t)=\sum_{\lambda\in\widehat{\U}(N)}e^{-\frac{t}{2}c_2(\lambda)},
\]
therefore all techniques from the previous subsection become useless because they were based on lower bounds of dimensions. We thus require an understanding of the asymptotics of Casimirs. A probabilistic approach, proposed in \cite{LM}, is the following: consider for any real $t>0$ and any integer $N\geq 1$ the probability measure $\Gfr_{N,t}$ on $\widehat{\U}(N)$ defined by
\begin{equation}
\Gfr_{N,t}(\lambda) = \frac{1}{Z_{\U(N)}(1,t)}e^{-\frac{t}{2}c_2(\lambda)},\quad \forall \lambda\in\widehat{\U}(N).
\end{equation}
If we recall that the Casimir is a quadratic expession of $\lambda$, the measure can be interpreted as a discrete Gaussian measure on $\widehat{\U}(N)$, which is a cone of $\Z^N$. The interpretation becomes even more obvious in the abelian case: on $\widehat{\U}(1)=\Z$ we have
\[
\Gfr_{1,t}(n)=\frac{1}{\theta(q_t)}e^{-\frac{t}{2}n^2},
\]
where $q_t=e^{-\frac{t}{2}}$. We will denote by $\E_{N,t}$ the expectation with respect to $\Gfr_{N,t}$. In the case $N=1$, the moments are fairly explicit.

\begin{proposition}
For any $t>0$ and $k\in\N$,
\begin{equation}
\E_{1,t}[n^{2k}] = \frac{1}{\theta(q_t)}\left(q\frac{d}{dq}\right)^k \theta(q_t),\quad 
\E_{1,t}[n^{2k+1}] = 0.
\end{equation}
\end{proposition}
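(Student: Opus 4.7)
The proof is essentially an exercise in recognizing the operator $q \tfrac{d}{dq}$ as a ``spectral'' tool for extracting $n^2$ from $q^{n^2}$, combined with the symmetry of the measure. Here is the plan.

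\textbf{Step 1: Vanishing of odd moments.} The measure $\Gfr_{1,t}$ is manifestly invariant under the involution $n \mapsto -n$ on $\Z$, since $c_2(n) = n^2$ (by specializing the $\U(N)$ Casimir formula to $N=1$, $\lambda = n$). Consequently, for every integer $k \geq 0$,
\[
\E_{1,t}[n^{2k+1}] = \frac{1}{\theta(q_t)} \sum_{n \in \Z} n^{2k+1} q_t^{n^2} = 0,
\]
as each term with index $n$ cancels with the term of index $-n$, and the $n=0$ term vanishes identically. Absolute convergence of the series for $|q_t|<1$ makes the pairwise cancellation rigorous.

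\textbf{Step 2: Action of $q\partial_q$ on $q^{n^2}$.} The one-line observation is that $q \tfrac{d}{dq} q^{n^2} = n^2 q^{n^2}$, so by induction
\[
\left(q \frac{d}{dq}\right)^{\!k} q^{n^2} = n^{2k} q^{n^2}.
\]
Applying this termwise to the series $\theta(q) = \sum_{n \in \Z} q^{n^2}$ and evaluating at $q = q_t$, I would obtain
\[
\left(q \frac{d}{dq}\right)^{\!k} \theta(q_t) = \sum_{n\in\Z} n^{2k} q_t^{n^2},
\]
which is exactly $\theta(q_t) \cdot \E_{1,t}[n^{2k}]$, yielding the claimed formula.

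\textbf{Step 3: Justification of the termwise differentiation.} The only technical point is the interchange of derivative and sum, but since $|q_t| = e^{-t/2} < 1$ for $t > 0$, the series $\sum_n q^{n^2}$ and all its formal derivatives converge normally on every compact subset of $\{|q| < 1\}$ (polynomial prefactors $n^{2k}$ are absorbed by $q^{n^2}$). Thus $\theta$ is holomorphic on the open unit disk and $q\tfrac{d}{dq}$ acts termwise, which legitimizes the calculation of Step 2. No step in the argument is a real obstacle; the core content is simply identifying the discrete Gaussian partition function $\theta(q_t)$ as a generating function for the $n^{2k}$, with $q\tfrac{d}{dq}$ playing the role that $\frac{\partial}{\partial(t/2)}$ plays for continuous Gaussians.
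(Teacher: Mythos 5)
Your proof is correct and follows the same route as the paper: odd moments vanish by the symmetry $n\mapsto -n$, and even moments come from iterating $\left(q\frac{d}{dq}\right)q^{n^2}=n^2q^{n^2}$ termwise. The paper's proof is essentially a two-line version of yours; your Step 3 merely makes explicit the (routine) justification of the interchange of derivative and sum.
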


\begin{proof}
For odd moments, it is a consequence of the fact that $n\mapsto n^{2k+1}e^{-\frac{t}{2}n^2}$ is an odd function on $\Z$. For even moments, we can obtain the formula by induction from
\[
\left(q\frac{d}{dq}\right)q^{n^2}=n^2q^{n^2}.
\]
\end{proof}

Surprisingly, it has been discovered that the measure $\Gfr_{N,t}$ can be expressed in terms of $\Gfr_{1,t}$ and a measure on the set of integer partitions. Recall that a partition of an integer $n$ is a nonincreasing family $\alpha=(\alpha_1,\ldots,\alpha_r)$ of positive integers: $\alpha_1\geq\ldots\geq\alpha_r>0$. Its \emph{length} is denoted by $\ell(\alpha)=r$ and its \emph{size} by $\vert\alpha\vert=n$. One can also write $\alpha\vdash n$. Partitions can be represented by Young diagrams, which are piles of cells, aligned in the left, such that the $i$-th row contains $\alpha_i$ cells (see Figure~\ref{fig:young_diagram}).
\begin{figure}[b!]
    \centering
    \includegraphics[width=0.4\linewidth]{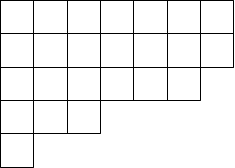}
    \caption{The Young diagram of the partition $(7,7,6,3,1)$.}
    \label{fig:young_diagram}
\end{figure}

The set $\Pfr_n$ of partitions of $n$ is in bijection with the dual of the symmetric group $\widehat{S}_n$, and the set of all partitions is the disjoint union $\Pfr=\bigsqcup_{n\geq 1}\Pfr_n$. For any $q\in(0,1)$, let the $q$-uniform measure on $\Pfr$ be the probability measure
\[
\Ufr_q(\alpha)=\phi(q)q^{\vert\alpha\vert},\quad \forall \alpha\in\Pfr,
\]
where $\phi:(0,1)\to\R$ is defined as the convergent infinite product
\[
\phi(q)=\prod_{m=1}^\infty (1-q^m).
\]

A first ``naive" way to relate highest weights to partitions consists in shifting the coefficients of a highest weight so that all coefficients are nonnegative: if $\lambda=(\lambda_1\geq\cdots\geq\lambda_N)\in\widehat{\U}(N)$, then we can write $\lambda=\Phi_N(\alpha,n)$, where $\alpha\in\Pfr$ and $n\in\Z$ are defined by $n=\lambda_N$, $\ell(\alpha)=\max\{i\leq N-1:\ \lambda_i>n\}$, and
\[
\alpha_i = \lambda_i-\lambda_N,\quad \forall i\leq \ell(\alpha).
\]
We get a first bijection
\[
\widehat{\U}(N)\simeq\{\alpha\in\Pfr:\ell(\alpha)\leq N-1\}\times\Z.
\]
It is reminiscent of the construction used in the study of Witten zeta functions, if we note that
\[
\widehat{\SU}(N)\simeq\{\alpha\in\Pfr:\ell(\alpha)\leq N-1\}.
\]
Let $\alpha=\alpha_\lambda$ be the partition obtained from $\lambda\in\widehat{\U}(N)$ from this construction. If we try to express $c_2(\lambda)$ in terms of $\alpha$ and $n$, we get
\begin{align*}
c_2(\lambda) = & \frac1N\left(\sum_i \lambda_i^2 + \sum_{1\leq i<j\leq N}\lambda_i-\lambda_j\right)\\
= & \frac1N\left(\sum_{i=1}^{\ell(\alpha)}(\alpha_i+n)^2+ (N-\ell(\alpha))n^2\right.\\
& + \left. \sum_{1\leq i<j\leq \ell(\alpha)}(\alpha_i-\alpha_j)+\left(N-\ell(\alpha)\right)\sum_{i=1}^{\ell(\alpha)}\alpha_i\right)\\
= & \vert\alpha\vert+n^2+\frac{1}{N}\left(\sum_{i=1}^{\ell(\alpha)}\alpha_i^2+(2n-\ell(\alpha))\vert\alpha\vert + \sum_{1\leq i<j\leq \ell(\alpha)}(\alpha_i-\alpha_j)\right).
\end{align*}
It is not clear how it behaves for large $N$, given a partition $\alpha$ of fixed size: for instance, consider two extremal cases, $\alpha_1=(1,\ldots,1,0)$ and $\alpha_2=(N-1,0,\ldots,0)$, so that $\ell(\alpha_1)=\vert\alpha_1\vert=N-1$, $\ell(\alpha_2)=1$ and $\vert\alpha_2\vert=N-1$. Then
\[
c_2(\Phi_N(\alpha_1,0))=2-\frac{2}{N},
\]
whereas
\[
c_2(\Phi_N(\alpha_2,0))=3N-6+\frac{3}{N},
\]
which shows that both partitions do not have comparable contributions. Intuitively, we expect that the highest weights that contribute to the limit are those which have small Casimirs, i.e. $c_2(\lambda)=O(1)$ as $N\to\infty$. Actually, such highest weights are obtained by perturbation of a constant highest weight $(n,\ldots,n)$ with two small partitions $\alpha$ and $\beta$ as in Figure~\ref{fig:AFHW}. They have been used in particular in \cite{Lem,Lem3} under the name ``almost flat highest weights", and they are somehow related to the rational representations of $\GL(N,\C)$, see \cite{Koi} for instance.
\begin{figure}[b!]
    \centering
    \includegraphics[width=0.8\linewidth]{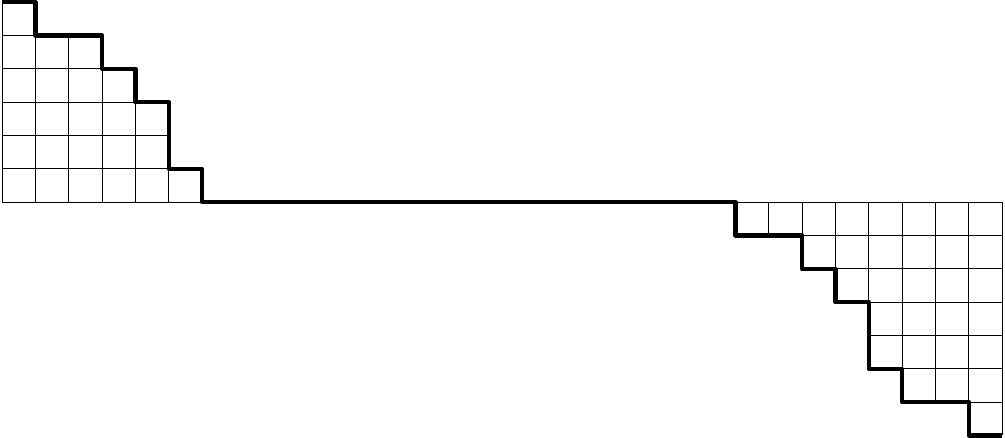}
    \caption{\small An almost flat highest weight.}
    \label{fig:AFHW}
\end{figure}
Let us start with a general statement. We shall use cutoff parameters $A_N=\lfloor(N+1)/2\rfloor-1$ and $B_N=N-\lfloor(N+1)/2\rfloor$, and remark that $A_N,B_N\sim N/2$ as $N\to\infty$.
\begin{proposition}
There exists a bijection
\[
\widehat{\U}(N)\simeq\Lambda_N:=\{(\alpha,\beta,n)\in\Pfr\times\Pfr\times\Z:\ell(\alpha)\leq A_N,\ell(\beta)\leq B_N\rfloor\}.
\]
It is given by
\[
\lambda_N(\alpha,\beta,n)=(\alpha_1+n,\ldots,\alpha_{\ell(\alpha)}+n,n,\ldots,n,n-\beta_{\ell(\beta)},\ldots,n-\beta_1).
\]
\end{proposition}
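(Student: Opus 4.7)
The plan is to exhibit an explicit inverse $\Psi_N : \widehat{\U}(N) \to \Lambda_N$ to the map $\lambda_N$ given in the statement, and then to check that the two compositions are the identity. The key observation is that the cutoffs are chosen so that $A_N + B_N = N-1$; this guarantees that every $\lambda_N(\alpha,\beta,n)$ has at least one plateau entry equal to $n$, and moreover that a single fixed index $k$ always lands inside this plateau regardless of the particular sizes of $\alpha$ and $\beta$.

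Concretely, I would set $k := \lfloor (N+1)/2 \rfloor$, so that $A_N = k-1$ and $B_N = N-k$. For a highest weight $\lambda = (\lambda_1 \geq \cdots \geq \lambda_N) \in \widehat{\U}(N)$, I define $n := \lambda_k$, take $\alpha$ to be the partition obtained from the nonincreasing nonnegative integer sequence $(\lambda_1 - n, \ldots, \lambda_{k-1} - n)$ after deleting trailing zeros, and $\beta$ the partition obtained from $(n - \lambda_N, n - \lambda_{N-1}, \ldots, n - \lambda_{k+1})$ after deleting trailing zeros. Nonnegativity and monotonicity of these sequences follow from the ordering of $\lambda$, while the length bounds $\ell(\alpha) \leq k-1 = A_N$ and $\ell(\beta) \leq N-k = B_N$ are built in by construction, so $\Psi_N(\lambda) := (\alpha,\beta,n)$ lies in $\Lambda_N$.

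Checking that $\Psi_N$ and $\lambda_N$ are mutually inverse is then routine. In one direction, $\lambda_N(\Psi_N(\lambda)) = \lambda$ by construction: the first $\ell(\alpha)$ coordinates reconstruct the part of $\lambda$ strictly above the level $n$, the last $\ell(\beta)$ coordinates the part strictly below, and the remaining coordinates are forced to equal $n$. In the other direction, for $\Psi_N(\lambda_N(\alpha,\beta,n)) = (\alpha,\beta,n)$ the only nontrivial point is that the $k$-th coordinate of $\lambda_N(\alpha,\beta,n)$ must equal $n$; this follows from
\[
\ell(\alpha) + 1 \;\leq\; A_N + 1 \;=\; k \;=\; N - B_N \;\leq\; N - \ell(\beta),
\]
which places the index $k$ inside the plateau block of $n$'s in $\lambda_N(\alpha,\beta,n)$.

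The only (mild) subtlety is selecting the cutoff $k$ correctly: it has to land inside the plateau for \emph{every} admissible $(\alpha,\beta,n)$, and this forces the unique choice $k = \lfloor(N+1)/2\rfloor$ compatible with the maximal admissible lengths $A_N$ and $B_N$. Beyond that, everything is bookkeeping, and I expect no real obstacle.
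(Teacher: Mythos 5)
Your proposal is correct and follows essentially the same route as the paper: the paper's proof also just exhibits the inverse map, setting $n_\lambda=\lambda_{\lfloor(N+1)/2\rfloor}$ and reading off $\alpha$ and $\beta$ from the entries above and below that pivot. You supply slightly more detail (the verification that the pivot index always lands in the plateau of $n$'s, which is exactly where the choice of $A_N$ and $B_N$ enters), but the idea is identical.
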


\begin{proof}
It is enough to exhibit the reciprocal bijection:
\[
(\lambda_N)^{-1}(\lambda)=(\alpha_\lambda,\beta_\lambda,n_\lambda),
\]
where $n_\lambda=\lambda_{\lfloor(N+1)/2\rfloor}$, and $\alpha_\lambda$ and $\beta_\lambda$ are the partitions defined by $(\alpha_\lambda)_i=\lambda_i-n_\lambda$, $(\beta_\lambda)_i=n-\lambda_{N-i}$.
\end{proof}
We will also need to introduce a new function on partitions, called the \emph{content}: if we denote by $\square$ a cell of the Young diagram corresponding to $\alpha$ and $R(\square)$ (resp. $C(\square)$) the index of its row (resp. column), then we set $c(\square)=C(\square)-R(\square)$. Let
\begin{equation}\label{eq:total_content}
K(\alpha)=\sum_{\square\in\alpha}c(\square)=\sum_{1\leq i\leq \ell(\alpha)}\sum_{1\leq j\leq \alpha_i}(j-i)
\end{equation}
be the \emph{total content} of the partition $\alpha$. The Casimir of the highest weight $\lambda_N(\alpha,\beta,n)$ has a very convenient expression:

\begin{corollary}[\cite{Lem}]\label{cor:Casimir}
For any $(\alpha,\beta,n)\in\Lambda_N$,
\begin{equation}
c_2(\lambda_N(\alpha,\beta,n))=\vert\alpha\vert+\vert\beta\vert+n^2+\frac{2}{N}(K(\alpha)+K(\beta)+n(\vert\alpha\vert-\vert\beta\vert)).
\end{equation}
\end{corollary}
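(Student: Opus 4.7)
The starting point is the explicit expression
\[
c_2(\lambda)=\frac{1}{N}\left(\sum_{i=1}^N\lambda_i^2+\sum_{1\leq i<j\leq N}(\lambda_i-\lambda_j)\right),
\]
together with the elementary rewriting
\[
\sum_{1\leq i<j\leq N}(\lambda_i-\lambda_j)=\sum_{i=1}^N(N-2i+1)\lambda_i,
\]
obtained by counting, for each index $i$, how many times $\lambda_i$ appears with a $+$ or $-$ sign. My plan is therefore to plug in the explicit shape
\[
\lambda_N(\alpha,\beta,n)=(\alpha_1+n,\ldots,\alpha_r+n,n,\ldots,n,n-\beta_s,\ldots,n-\beta_1),
\]
where $r=\ell(\alpha)$, $s=\ell(\beta)$, and evaluate the two sums separately.

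For the quadratic sum, I would split the index set $\{1,\ldots,N\}$ into three blocks (top, middle, bottom). Expanding $(\alpha_i+n)^2$ and $(n-\beta_j)^2$, the middle block contributes $(N-r-s)n^2$ and the cross terms collect to $2n(|\alpha|-|\beta|)$, so
\[
\sum_i\lambda_i^2=\sum_i\alpha_i^2+\sum_j\beta_j^2+2n(|\alpha|-|\beta|)+Nn^2.
\]
For the linear sum $\sum_i(N-2i+1)\lambda_i$, I first observe that $\sum_{i=1}^N(N-2i+1)=0$, so the contribution of the constant part $n$ vanishes identically. Then, for the $\beta$-block, I perform the change of index $k=N-i+1$: one checks that $N-2i+1=-(N-2k+1)$, and the sign combines with the $-\beta_{N-i+1}$ to give
\[
\sum_i(N-2i+1)\lambda_i=\sum_{i=1}^r(N-2i+1)\alpha_i+\sum_{k=1}^s(N-2k+1)\beta_k.
\]

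Next, I would use the content identity
\[
2K(\alpha)=\sum_{i=1}^{\ell(\alpha)}\alpha_i^2-\sum_{i=1}^{\ell(\alpha)}(2i-1)\alpha_i,
\]
which follows from $\sum_{j=1}^{\alpha_i}(j-i)=\binom{\alpha_i+1}{2}-i\alpha_i$ summed over $i$, and the same identity for $\beta$. Rewriting $(N-2i+1)=N-(2i-1)$, the linear sum becomes
\[
N(|\alpha|+|\beta|)-\bigl(\textstyle\sum_i\alpha_i^2-2K(\alpha)\bigr)-\bigl(\sum_k\beta_k^2-2K(\beta)\bigr).
\]
Adding the quadratic sum to this, the terms $\sum_i\alpha_i^2$ and $\sum_k\beta_k^2$ cancel, leaving
\[
\sum_i\lambda_i^2+\sum_{i<j}(\lambda_i-\lambda_j)=Nn^2+N(|\alpha|+|\beta|)+2K(\alpha)+2K(\beta)+2n(|\alpha|-|\beta|),
\]
and dividing by $N$ yields the stated formula.

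No step is truly hard, but the only place where an error can easily slip in is the reindexing in the $\beta$-block (signs, the off-by-one from $N-i+1$), so that is the part I would write out carefully; after that, the telescoping of the $n$-contribution in the linear sum and the cancellation of $\sum\alpha_i^2$, $\sum\beta_j^2$ against the content identity is essentially automatic bookkeeping.
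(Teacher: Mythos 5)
Your computation is correct: the reindexing of the $\beta$-block (coefficient $N-2(N-j+1)+1=-(N-2j+1)$ combining with $-\beta_j$ to give $+(N-2j+1)\beta_j$), the vanishing of $\sum_{i=1}^N(N-2i+1)$, and the content identity $2K(\alpha)=\sum_i\alpha_i^2-\sum_i(2i-1)\alpha_i$ all check out and assemble into exactly the stated formula. The paper states this corollary without proof (citing \cite{Lem}), but the displayed computation it carries out for the one-sided bijection $\Phi_N(\alpha,n)$ is the same direct substitution into the Casimir formula, so your argument is essentially the intended one, with the one additional (and necessary) bookkeeping step of converting $\sum_i(2i-1)\alpha_i$ into the total content so that $K(\alpha)$ and $K(\beta)$ appear.
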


The following result, which is one of the main results of \cite{LM}, shows that a random highest weight $\lambda\in\widehat{\U}(N)$ distributed according to $\Gfr_{N,t}$ can be seen as a coupling of two $q_t$-uniform random partitions and an integer distributed according to $\Gfr_{1,t}$, where $q_t=e^{-\frac{t}{2}}$.

\begin{theorem}[\cite{LM}]\label{thm:highest_weights_partitions}
Let $N\geq 1$ be an integer and $t>0$ be a real number. If $\lambda\sim\Gfr_{N,t}$, then for any test function $F:\widehat{\U}(N)\to\R$,
\begin{align}
\begin{split}
\E_{N,t}[F(\lambda)] = & \frac{\theta(q_t)}{Z_{\U(N)}(1,t)\phi(q_t)^2}\\
& \times \E[F(\lambda_N(\alpha,\beta,n))q_t^{\frac{2}{N}(K(\alpha)+K(\beta)+n(\vert\alpha\vert-\vert\beta\vert)}\mathbf{1}_{\Lambda_N}(\alpha,\beta,n)],
\end{split}
\end{align}
where $(\alpha,\beta,n)$ are independent random variables such that $\alpha,\beta$ are $q_t$-uniform and $n\sim\Gfr_{1,t}$.
\end{theorem}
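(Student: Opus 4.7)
The plan is to unpack the measure $\Gfr_{N,t}$ through the parametrization $\lambda=\lambda_N(\alpha,\beta,n)$ and then recognize the three standard reference measures ($q_t$-uniform on partitions and discrete Gaussian on $\Z$) hidden inside $q_t^{c_2(\lambda)}$. Since everything discrete and absolutely summable, no analytic subtlety arises and the whole argument is a bookkeeping exercise relying on the explicit Casimir formula of Corollary~\ref{cor:Casimir}.

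First I would write out the definition of the expectation:
\begin{equation*}
\E_{N,t}[F(\lambda)]=\frac{1}{Z_{\U(N)}(1,t)}\sum_{\lambda\in\widehat{\U}(N)}F(\lambda)\,q_t^{c_2(\lambda)},\qquad q_t=e^{-t/2}.
\end{equation*}
Then I would transport the sum to $\Lambda_N$ via the bijection $(\alpha,\beta,n)\mapsto\lambda_N(\alpha,\beta,n)$ from the preceding proposition, writing
\begin{equation*}
\E_{N,t}[F(\lambda)]=\frac{1}{Z_{\U(N)}(1,t)}\sum_{(\alpha,\beta,n)\in\Lambda_N}F(\lambda_N(\alpha,\beta,n))\,q_t^{c_2(\lambda_N(\alpha,\beta,n))}.
\end{equation*}

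The next step is to split the exponent using Corollary~\ref{cor:Casimir}, which gives
\begin{equation*}
q_t^{c_2(\lambda_N(\alpha,\beta,n))}=q_t^{|\alpha|}\,q_t^{|\beta|}\,q_t^{n^2}\cdot q_t^{\frac{2}{N}\left(K(\alpha)+K(\beta)+n(|\alpha|-|\beta|)\right)}.
\end{equation*}
Here I would identify each of the three leading factors with the density of a reference law: by the definition of $\Gfr_{1,t}$, one has $q_t^{n^2}=\theta(q_t)\Gfr_{1,t}(n)$; and by the definition of the $q_t$-uniform law on partitions, $q_t^{|\alpha|}=\phi(q_t)^{-1}\Ufr_{q_t}(\alpha)$ and similarly for $\beta$. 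Multiplying these identifications yields the prefactor $\theta(q_t)/\phi(q_t)^2$ of the statement, and converts the unconstrained sum over $\Pfr\times\Pfr\times\Z$ into an expectation under the product law of $(\alpha,\beta,n)$; the constraint $(\alpha,\beta,n)\in\Lambda_N$ is absorbed into the indicator $\mathbf{1}_{\Lambda_N}(\alpha,\beta,n)$.

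Combining the two displays produces exactly the claimed identity, with the factor $1/Z_{\U(N)}(1,t)$ untouched. There is no real obstacle: the Casimir formula does all the work by factoring cleanly into a leading quadratic part (which accounts for the reference measures) and a $1/N$ correction (which remains inside the expectation). The only mild care is to ensure that no highest weight is counted twice and that the shifts defining $\lambda_N$ and the bijection match the conventions fixing $A_N$ and $B_N$; once this is set, the identification is automatic and the theorem follows in one line from Corollary~\ref{cor:Casimir}.
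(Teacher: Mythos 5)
Your proposal is correct and follows exactly the route the paper indicates: the paper states that the theorem ``is a direct consequence of the definitions and of Corollary~\ref{cor:Casimir}'', and your bookkeeping---transporting the sum through the bijection with $\Lambda_N$, factoring $q_t^{c_2(\lambda_N(\alpha,\beta,n))}$ via the Casimir formula, and recognizing $q_t^{n^2}=\theta(q_t)\Gfr_{1,t}(n)$ and $q_t^{\vert\alpha\vert}=\phi(q_t)^{-1}\Ufr_{q_t}(\alpha)$---is precisely that argument spelled out. Nothing is missing.
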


The proof of this theorem is a direct consequence of the definitions and of Corollary~\ref{cor:Casimir}. It enables the following result:

\begin{corollary}
For any $t>0$,
\begin{equation}
\lim_{N\to\infty} Z_{\U(N)}(1,t)=\frac{\theta(q_t)}{\phi(q_t)^2}.
\end{equation}
\end{corollary}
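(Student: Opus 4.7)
The plan is to apply Theorem~\ref{thm:highest_weights_partitions} with the constant test function $F\equiv 1$. Since $\Gfr_{N,t}$ is a probability measure, the left-hand side of that identity equals $1$, and rearranging gives
\[
Z_{\U(N)}(1,t) = \frac{\theta(q_t)}{\phi(q_t)^2}\,\E\!\left[q_t^{\frac{2}{N}\left(K(\alpha)+K(\beta)+n(|\alpha|-|\beta|)\right)}\mathbf{1}_{\Lambda_N}(\alpha,\beta,n)\right],
\]
where $\alpha,\beta\sim\Ufr_{q_t}$ and $n\sim\Gfr_{1,t}$ are independent. The statement therefore reduces to showing that the expectation on the right tends to $1$ as $N\to\infty$, which I would establish by dominated convergence.

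Pointwise convergence of the integrand to $1$ is immediate: for any fixed triple $(\alpha,\beta,n)$ the indicator $\mathbf{1}_{\Lambda_N}(\alpha,\beta,n)$ equals $1$ for $N$ large enough (since $A_N,B_N\to\infty$), and the exponential prefactor tends to $q_t^0=1$ because the exponent $\frac{2}{N}(K(\alpha)+K(\beta)+n(|\alpha|-|\beta|))$ is a fixed integer divided by $N$.

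For the majorant I would combine two ingredients. First, the cross term $n(|\alpha|-|\beta|)$ can be neutralised by summing $n$ out exactly: completing the square gives $\sum_n q_t^{n^2+2n(|\alpha|-|\beta|)/N}=q_t^{-(|\alpha|-|\beta|)^2/N^2}\,\Theta_N(\alpha,\beta)$, with $\Theta_N$ bounded uniformly by an elementary Jacobi-theta estimate and the residual $1/N^2$ correction negligible. Second, for the remaining sum over $\alpha,\beta$, I would split the expectation along $\{|\alpha|,|\beta|\leq N^{1/3}\}$: on this bulk event the exponent $\frac{2}{N}(K(\alpha)+K(\beta))$ is uniformly $o(1)$ in view of the crude bound $|K(\alpha)|\leq C|\alpha|^2$, and pointwise convergence yields a bulk contribution $1+o(1)$; on the complement, the weights $\phi(q_t)q_t^{|\alpha|}\phi(q_t)q_t^{|\beta|}$ decay exponentially in $N^{1/3}$, so the tail is $o(1)$ provided the correction factor is controlled there.

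The main obstacle is precisely this tail control. The crude bound $|K(\alpha)|\leq C|\alpha|^2$ produces a correction $q_t^{-C|\alpha|^2/N}$ that overwhelms the geometric decay $q_t^{|\alpha|}$ once $|\alpha|$ exceeds order $N$, so the threshold $N^{1/3}$ is not automatic. One must exploit the constraint $\ell(\alpha)\leq A_N$ inside $\mathbf{1}_{\Lambda_N}$ to sharpen the content bound (e.g.\ $|K(\alpha)|\leq \frac{1}{2}|\alpha|(\alpha_1+\ell(\alpha))$ forces either tall-and-thin or short-and-wide partitions, each regime being handled separately using the tail of $\Ufr_{q_t}$), and then balance the two competing scales. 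Once this domination is in place, dominated convergence immediately yields the limit $\theta(q_t)/\phi(q_t)^2$.
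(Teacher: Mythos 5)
Your reduction is exactly the one the paper uses: apply Theorem~\ref{thm:highest_weights_partitions} with $F\equiv 1$, so that $Z_{\U(N)}(1,t)=\frac{\theta(q_t)}{\phi(q_t)^2}\,\E\big[q_t^{\frac{2}{N}(K(\alpha)+K(\beta)+n(\vert\alpha\vert-\vert\beta\vert))}\mathbf{1}_{\Lambda_N}\big]$, and then pass to the limit by dominated convergence (the paper cites the same two ingredients: a global bound on $c_2(\lambda_N(\alpha,\beta,n))$ and control of the indicator). The set-up and the pointwise limit are correct, and your treatment of the indicator is fine since $A_N,B_N\to\infty$. The gap is the one you flag yourself: the integrable majorant is never produced, and the bounds you propose for producing it point in a slightly wrong direction. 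Since $q_t<1$, you only need a \emph{lower} bound on $K(\alpha)$, not a bound on $\vert K(\alpha)\vert$. Writing $K(\alpha)=\sum_i\binom{\alpha_i}{2}-\sum_i(i-1)\alpha_i$, the positive part --- which is the one of order $\vert\alpha\vert\alpha_1$, i.e.\ your ``short-and-wide'' regime --- only makes $q_t^{\frac{2}{N}K(\alpha)}$ smaller and is harmless. The negative part satisfies $\sum_i(i-1)\alpha_i\leq\frac{\ell(\alpha)-1}{2}\vert\alpha\vert$ (Chebyshev's sum inequality, $\alpha$ being nonincreasing), so on $\Lambda_N$, where $\ell(\alpha)\leq A_N\leq\frac{N-1}{2}$, one gets $\frac{2}{N}K(\alpha)\geq-\frac{\vert\alpha\vert}{2}$ and hence the $N$-uniform, summable majorant $q_t^{\vert\alpha\vert}q_t^{\frac{2}{N}K(\alpha)}\leq q_t^{\vert\alpha\vert/2}$; no splitting at $N^{1/3}$ and no case analysis on the shape of $\alpha$ is needed. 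This is precisely how the length constraint in $\mathbf{1}_{\Lambda_N}$ enters, so your instinct about where the information comes from is right, but the two-sided bound $\vert K(\alpha)\vert\leq\frac12\vert\alpha\vert(\alpha_1+\ell(\alpha))$ is not the tool that closes it.

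The step that remains genuinely incomplete after this fix is the cross term. Completing the square in $n$ leaves a factor $q_t^{-(\vert\alpha\vert-\vert\beta\vert)^2/N^2}$, and this beats the surviving decay $q_t^{(\vert\alpha\vert+\vert\beta\vert)/2}$ once $\vert\alpha\vert+\vert\beta\vert\gtrsim N^2$; calling it ``negligible'' is only valid for $\vert\alpha\vert,\vert\beta\vert=o(N)$, and your scheme leaves the far tail of the $q_t$-uniform measures uncontrolled there. That region must be handled by a separate argument (a direct lower bound on $c_2$ exploiting $\alpha_1\geq 2\vert\alpha\vert/N$ on $\Lambda_N$, or the deviation estimates for $\Ufr_{q_t}$ that the paper invokes for its ``slightly modified'' dominated convergence). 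So: right skeleton, same route as the paper, but the crux --- the construction of the dominating function --- is asserted rather than proved, and as written the proposed estimates would not suffice to establish it.
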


This limit was first written in physics \cite{Dou}, and proved in \cite{Lem} by combinatorial techniques. It can be recovered from the previous theorem by taking $F(\lambda)=1$ for any $\lambda\in\widehat{\U}(N)$ and by using two key estimates: a global bound on $c_2(\lambda_N(\alpha,\beta,n))$ that enables to perform a (slightly modified) dominated convergence, and deviation inequalities for the measure $\Ufr_q$ for any $q\in(0,1)$ that ensure that $\E[\mathbf{1}_{\Lambda_N}(\alpha,\beta,n)]=1+O(e^{-\mathrm{cst}\cdot N})$. As we will see later, this framework has much more to offer, because it can lead to an asymptotic expansion up to any order, as well as an interpretation in terms of random surfaces.

\subsection{Genus 0: Douglas--Kazakov phase transition}

The careful reader will observe that we have seen the asymptotics of $Z_{\U(N)}(g,t)$ for any $g\geq 1$, but the case of the sphere remains to be treated. Surprisingly, the corresponding regime is completely different and requires another approach. From Migdal's formula, one can prove that
\begin{equation}
Z_{\U(N)}(0,t)=\frac{e^{\frac{t}{24}(N^2-1)+N(N-1)\log N}}{\prod_{1\leq i<j\leq N}(j-i)^2}\sum_{\lambda\in\widehat{\U}(N)}e^{-N^2\mathcal{J}_t(\widehat{\mu}_\lambda)},
\end{equation}
where $\widehat{\mu}_\lambda$ is the empirical measure of rescaled highest weights:
\[
\widehat{\mu}_\lambda = \frac{1}{N}\sum_{i=1}^N\delta_{\lambda_i+\frac{N+1}{2}-i},
\]
and $\mathcal{J}_t$ is a functional on the space of Borel probability measures on $\R$ given by
\[
\mathcal{J}_t(\mu)=-\iint_{(x,y)\in\R^2,x\neq y}\log\vert x-y\vert\mu(dx)\mu(dy)+\frac{t}{2}\int_\R x^2\mu(dx).
\]
In this case, the partition function diverges in the large-$N$ limit, but the \emph{free energy} $\lim_{N\to\infty}\frac{1}{N^2}\log Z_{\U(N)}(0,t)$ exists and is finite. It was conjectured by Douglas and Kazakov \cite{DK} and proved by different methods by Boutet de Monvel and Shcherbina \cite{BS} and L\'evy and Ma\"ida \cite{LevMai2}.

\begin{theorem}[Douglas--Kazakov phase transition]
For all $t\geq 0$, the limit
\begin{equation}
F(t)=\lim_{N\to\infty} \frac{1}{N^2}\log Z_{\U(N)}(0,t)
\end{equation}
exists. It defines a function that is $\mathscr{C}^2$ on $[0,\infty)$ and $\mathscr{C}^\infty$ on $[0,\pi^2)\cup(\pi^2,\infty)$, whose third derivative has a discontinuity at $\pi^2$.
\end{theorem}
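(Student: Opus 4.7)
The plan is to recast Migdal's formula as a discrete log-gas partition function, extract its large-$N$ leading order through a Laplace / large-deviations argument, and analyze the resulting constrained equilibrium problem whose minimizer changes character at $t=\pi^2$.

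First I would rescale the shifted highest weights by setting $x_i^{(\lambda)} = (\lambda_i + (N+1)/2 - i)/N$. Since $\lambda$ is weakly decreasing, the $x_i^{(\lambda)}$ are strictly decreasing with spacing at least $1/N$, so any weak limit of the empirical measure $\nu_N^\lambda = \frac{1}{N}\sum_i \delta_{x_i^{(\lambda)}}$ is a probability measure on $\R$ whose Lebesgue density is bounded above by $1$. Rewriting $\widehat{\mu}_\lambda$ in terms of $\nu_N^\lambda$ (a dilation by $N$) and collecting the explicit prefactors via Stirling / Barnes $G$-function asymptotics for $\prod_{i<j}(j-i)^2$ puts Migdal's identity in the form
\[
\frac{1}{N^2}\log Z_{\U(N)}(0,t) = c_N(t) + \frac{1}{N^2}\log \sum_{\lambda \in \widehat{\U}(N)} e^{-N^2 I_t(\nu_N^\lambda)},
\]
with $c_N(t) \to c(t)$ explicit and
\[
I_t(\nu) = \frac{t}{2}\int y^2\,d\nu(y) - \iint_{x\neq y}\log|x-y|\,d\nu(x)\,d\nu(y).
\]
A standard Laplace / large-deviations argument for discrete log-gases (the lattice itself supplies the reference measure) then yields
\[
F(t) = c(t) - \inf\bigl\{\,I_t(\nu) : \nu \text{ a probability measure on } \R \text{ with } d\nu/dy \leq 1\,\bigr\}.
\]

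Next I would solve this constrained equilibrium problem. Dropping the constraint momentarily, the unique minimizer is the Wigner semicircle $\nu_t^{\mathrm{sc}}(dy) = \frac{t}{2\pi}\sqrt{4/t - y^2}\,\mathbf{1}_{|y|<2/\sqrt{t}}\,dy$, whose peak density $\sqrt{t}/\pi$ at the origin equals $1$ precisely when $t=\pi^2$. Hence for $t \leq \pi^2$ the constraint is inactive and $\nu_t^{\mathrm{sc}}$ is the genuine minimizer. For $t > \pi^2$ the minimizer saturates the bound on a central interval $[-b(t),b(t)]$ and has square-root soft edges on $[-a(t),-b(t)] \cup [b(t),a(t)]$; the Euler--Lagrange system $\frac{t}{2}y^2 - 2\int \log|x-y|\,d\nu(x) = \mathrm{const}$ across the free boundary becomes a pair of transcendental relations expressing $a(t)$ and $b(t)$ through complete elliptic integrals $K,E$ of modulus $k(t) = b(t)/a(t)$.

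Finally, to verify the regularity claim I would compute $F(t)$ explicitly in both phases and Taylor expand across $t = \pi^2$. In the subcritical phase, evaluating $I_t(\nu_t^{\mathrm{sc}})$ yields a function real-analytic in $t > 0$, so $F$ is $\mathscr{C}^\infty$ on $[0,\pi^2)$. In the supercritical phase, $F(t)$ is expressed through $k(t)$ via elliptic integrals, and smoothness in $t$ follows from the implicit function theorem applied to the defining relation between $t$ and $k$, valid on $(\pi^2, \infty)$. The main obstacle, and the very source of the third-order nature of the transition, is to show that $F$, $F'$, and $F''$ match at $t = \pi^2$ while $F'''$ acquires a nonzero jump. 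This demands asymptotic expansions of $K(k)$ and $E(k)$ as $k \to 0^+$ (corresponding to $t \to \pi^2$ from above) to sufficiently high order: three coefficients must cancel between the two phases before the discrepancy appears, which is the technical heart of the computation carried out in Douglas--Kazakov \cite{DK}, Boutet de Monvel--Shcherbina \cite{BS}, and Lévy--Maïda \cite{LevMai2}.
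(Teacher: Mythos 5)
Your proposal is correct and follows essentially the route the paper itself points to: the paper gives no self-contained proof but attributes the result to Boutet de Monvel--Shcherbina and L\'evy--Ma\"ida, and the latter argument is precisely the constrained-equilibrium/large-deviation analysis you outline (the density-$\leq 1$ constraint from the unit spacing of shifted highest weights, the semicircle saturating at $t=\pi^2$, and the elliptic-integral phase for $t>\pi^2$). The one step you label ``standard'' that genuinely requires care is the large-deviation lower bound for the discrete log-gas, which is exactly what the cited Guionnet--Ma\"ida theorem supplies.
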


A popular method to compute the free energy nowadays is by using large deviation techniques. For instance, the proof from \cite{LevMai2} relies on \cite[Theorem 1.2]{GM}, which provides a quite general large deviation principle for random Hermitian matrices.

\section{Gauge/string duality on a torus}\label{sec:gauge-string}

Let $\Tbb$ be a compact two-dimensional torus. It is a compact surface of genus 1 as a smooth real manifold, but it is also a Riemann surface, i.e. a complex manifold of dimension 1. As such, it can be defined as the quotient $\C/\Lambda$ where $\Lambda=\Z\oplus\tau\Z$ is a lattice generated by $(1,\tau)$ with $\Im(\tau)>0$. 

\begin{definition}
Let $R=\{x_1,\ldots,x_k\}$ be a finite subset of $\Tbb$ and $n\geq 1$ be an integer. A \emph{ramified covering} of $\Tbb$ of degree $n$ with ramification locus $R$ is a Riemann surface $X$ together with a continuous surjective map $\pi:X\to\Tbb$ such that:
\begin{enumerate}
\item For any $x\in\Tbb\setminus R$, the preimage of an open neighborhood $U\subset \Tbb\setminus R$ of $x$ by $\pi$ is the disjoint union of open sets $V_1,\ldots,V_n$ of $X$,
\item For all $x\in R$, there is an integer $n_x\geq 2$ such that $\pi^{-1}(\{x\})$ contains $n+1-n_x$ points (instead of $n$). The number $n_x$ is the \emph{ramification order}, and $x$ is called a \emph{generic ramification point} if $n_x=2$.
\end{enumerate}
\end{definition}

Fix a basepoint $x_0\in\Tbb$. For simplicity, we shall always assume that coverings are not ramified over $x_0$ (otherwise we can change the basepoint). The monodromy of the ramified covering $X\to\Tbb$ along loops based at $x_0$, which is basically the same object as holonomy for principal $G$-bundles, yields a bijection between the set of ramified coverings of degree $n$ with ramification locus $R=\{x_1,\ldots,x_k\}$ and the space
\[
\Hom(\pi_1(\Tbb\setminus R,x_0),S_n).
\]
The group of automorphisms of such coverings is $S_n$, and the space of equivalence classes of ramified coverings of degree $n$ with $k$ ramification points of $\Tbb$ is
\[
\mathcal{R}_{n,k}=\Hom(\pi_1(\Tbb\setminus R,x_0),S_n)/S_n,
\]
where $R=\{x_1,\ldots,x_k\}\subset \Tbb$ is an \emph{arbitrary} finite subset of size $k$ (indeed, the fundamental groups of $\Tbb\setminus\{x_1,\ldots,x_k\}$ and $\Tbb\setminus\{x'_1,\ldots,x'_k\}$ for any $\{x_1,\ldots,x_k\}$ and $\{x'_1,\ldots,x'_k\}$ are isomorphic). To avoid cumbersome notations, we will also denote by $X$ the equivalence class of the corresponding covering $X\in\Hom(\pi_1(\Tbb\setminus R,x_0),S_n)$. Remark that $\mathcal{R}_{n,k}$ is analogous to character varieties introduced in the end of \S\ref{sec:loop_group}, but with the finite group $G=S_n$. The enumeration of ramified coverings is usually considered up to equivalence, which corresponds to the counting measure on $\mathcal{R}_{n,k}$ (and more specifically, on subsets with specific ramification profiles, see \cite[\S 2.1]{EO}. We can form the countably infinite space
\[
\mathcal{R}=\bigsqcup_{n,k\geq 0}\mathcal{R}_{n,k}
\]
of all possible ramified coverings of the torus, up to automorphisms. The counting measure on $\mathcal{R}$ is $\mu$ such that
\[
\int_\mathcal{R}f(X)\mu(dX)=\sum_{n,k\geq 0}\sum_{X\in\mathcal{R}_{n,k}}f(X)=\sum_{n,k\geq 0}\sum_{\substack{X\to\Tbb\\ \deg(X)=n\\ \chi(X)=-k}}\frac{f(X)}{n!},
\]
where $\chi(X)$ is the Euler characteristic of of $X\to\Tbb$. Any random model built on $\mathcal{R}$ would be a model of random surfaces, suitable for a string theory (although finding the corresponding Lagrangian, or string action, might be a challenge), and we will describe one in this subsection: for any $n,k$ we consider the \emph{Hurwitz space} $\mathcal{H}_1(n,k)\subset\mathcal{R}_{n,k}$ of (orientable) ramified coverings of degree $n$ with $k$ \emph{generic} ramifications: the monodromy of a loop around any ramification point is in the conjugacy class of a transposition. It is one of the simplest models of random ramified coverings, and also a very natural one because it is related to random walks on the Cayley graph of the symmetric group generated by transpositions \cite{Nov24}.

By the Riemann--Hurwitz formula, one can check that $\mathcal{H}_1(n,2k+1)$ is empty for any $k$, and that any $X\in\mathcal{H}_1(n,2k)$ has genus $g=k+1$. Counting such ramified coverings is a standard problem in enumerative geometry, considered for instance by Eskin and Okounkov \cite{EO}, and we denote by $H_1(n,2k)=\#\mathcal{H}_1(n,2k)$ the size of the Hurwitz space, also known as a \emph{Hurwitz number}. Let
\[
\mathcal{F}_k(q)=\sum_{n\geq 1}q^n H_1(n,2k)
\]
be the generating function of such numbers. Using a general formula due to Frobenius \cite[Theorem A.1.10]{LZ}, we have:
\[
\sum_{\alpha\in\Pfr} q^{\vert\alpha\vert}K(\alpha)^{2k}=\mathcal{F}_k(q),
\]
where $K(\alpha)$ is the total content introduced in~\eqref{eq:total_content}. By similar arguments,
\[
\sum_{\alpha\in\Pfr} q^{\vert\alpha\vert}\vert\alpha\vert^\ell K(\alpha)^{2k}=\sum_{n\geq 1}q^nn^\ell H_1(n,2k)=\left(q\frac{d}{dq}\right)^k\mathcal{F}_k(q).
\]
Using Theorem~\ref{thm:highest_weights_partitions} and expanding the exponential terms up to order $p\geq 1$, we obtain (up to technical details that are developed in \cite{LM}) the following:
\begin{theorem}
For any $p\geq 1$, and any $t>0$, we have an asymptotic expansion
\begin{equation}\label{eq:asympt_exp}
Z_{\U(N)}(1,t)=a_0(t)+\frac{a_1(t)}{N^2}+\ldots+\frac{a_{p}(t)}{N^{2p}}+O\left(N^{-2p-2}\right).
\end{equation}
The coefficients $(a_k(t))_{k\geq 0}$ do not depend on $p$, and they are explicit functionals of $\mathcal{F}_k(q_t)$ and $\theta(q_t)$.
\end{theorem}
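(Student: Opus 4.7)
The plan is to extract the expansion directly from Theorem~\ref{thm:highest_weights_partitions}: applying it to the constant test function $F\equiv 1$ rewrites the partition function as
\[
Z_{\U(N)}(1,t) = \frac{\theta(q_t)}{\phi(q_t)^2}\,\E\!\left[e^{-\frac{t}{N}S_N}\mathbf{1}_{\Lambda_N}(\alpha,\beta,n)\right],
\]
where $S_N:=K(\alpha)+K(\beta)+n(|\alpha|-|\beta|)$ and $\alpha,\beta,n$ are independent with $\alpha,\beta\sim\Ufr_{q_t}$ and $n\sim\Gfr_{1,t}$. I would first discard $\mathbf{1}_{\Lambda_N}$ at the cost of an $O(e^{-cN})$ error, using that any partition of length $m$ has size at least $m$, so $\Ufr_{q_t}(\ell(\alpha)\geq m)$ decays exponentially in $m$. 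I would then Taylor expand the exponential up to order $2p+1$,
\[
e^{-\frac{t}{N}S_N} = \sum_{k=0}^{2p+1}\frac{(-t)^k}{k!\,N^k}\,S_N^k + R_{N,p},
\]
and establish $\E[R_{N,p}]=O(N^{-2p-2})$ by a moment estimate discussed below.

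The crucial step turning the naive $1/N$ expansion into a $1/N^2$ expansion is a parity argument showing $\E[S_N^k]=0$ for every odd $k$. Write $S_N=A+nB$ with $A=K(\alpha)+K(\beta)$ and $B=|\alpha|-|\beta|$. The law $\Gfr_{1,t}$ of $n$ is symmetric; the pair $(\alpha,\beta)$ is exchangeable under $\Ufr_{q_t}^{\otimes 2}$, so $B$ is symmetric about $0$; and the $q$-uniform measure on a single partition is invariant under conjugation $\alpha\mapsto\alpha^{T}$, which flips the sign of $K(\alpha)$. Expanding $S_N^k=\sum_j\binom{k}{j}A^{k-j}B^j n^j$, the factors $\E[n^j]$ and $\E[B^j]$ vanish for odd $j$, while for even $j$ every monomial appearing in $(K_1+K_2)^{k-j}$ contains an odd power of one of the $K_i$ when $k-j$ is odd, so $\E[A^{k-j}]=0$ in that case as well.

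For the surviving even moments, I would expand $\E[S_N^{2k}]$ multinomially into a finite sum of terms of the form $\E[K(\alpha)^{2a}|\alpha|^b]\,\E[K(\beta)^{2a'}|\beta|^{b'}]\,\E[n^{2c}]$, subject to the parity constraint inherited from the previous step. Each partition factor equals $\phi(q_t)\,(q\partial_q)^b\mathcal{F}_a(q_t)$ via the Frobenius-type identity $\sum_\alpha q^{|\alpha|}|\alpha|^b K(\alpha)^{2a}=(q\partial_q)^b\mathcal{F}_a(q)$ recalled in the excerpt, and each Gaussian factor equals $\theta(q_t)^{-1}(q\partial_q)^c\theta(q_t)$. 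Multiplying by the overall prefactor $\theta(q_t)/\phi(q_t)^2$ and collecting by powers of $N^{-2k}$ defines the coefficients $a_k(t)$; they do not depend on the truncation order $p$ because only $R_{N,p}$ does.

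The main obstacle I anticipate is the remainder estimate $\E[R_{N,p}]=O(N^{-2p-2})$: the Lagrange form gives the bound $|R_{N,p}|\leq \frac{(t/N)^{2p+2}}{(2p+2)!}|S_N|^{2p+2}\,e^{(t/N)|S_N|}$, so one needs $\E[|S_N|^{2p+2}e^{(t/N)|S_N|}\mathbf{1}_{\Lambda_N}]$ bounded uniformly in $N$. This should follow from the fact that $|\alpha|$, $|n|$ admit finite exponential moments under $\Ufr_{q_t}$ and $\Gfr_{1,t}$ respectively as soon as $q_t<1$, combined with the crude bound $|K(\alpha)|\leq |\alpha|(\alpha_1+\ell(\alpha))\leq 2|\alpha|^{2}$ and a Cauchy--Schwarz splitting to absorb the factor $e^{(t/N)|S_N|}$; together these give $\E[|S_N|^{m}\mathbf{1}_{\Lambda_N}]\leq C_{m,t}$ uniformly in $N$, closing the argument.
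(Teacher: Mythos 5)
Your strategy coincides with the one the paper sketches (the paper defers all analysis to \cite{LM}): apply Theorem~\ref{thm:highest_weights_partitions} with $F\equiv 1$, Taylor-expand $q_t^{\frac{2}{N}S_N}=e^{-\frac{t}{N}S_N}$, kill the odd moments by parity, and identify the surviving even moments with $(q\frac{d}{dq})^{b}\mathcal{F}_a(q_t)$ and $(q\frac{d}{dq})^{c}\theta(q_t)$. The combinatorial core is right: the three symmetries you invoke ($n\mapsto -n$, $(\alpha,\beta)\mapsto(\beta,\alpha)$, and conjugation $\alpha\mapsto\alpha^{T}$, which flips the sign of $K$) do force the odd moments of $S_N$ to vanish — provided you factor the mixed moments $\E[K(\alpha)^{a}K(\beta)^{a'}|\alpha|^{b}|\beta|^{b'}n^{c}]$ using the independence of $\alpha,\beta,n$, rather than treating $A=K(\alpha)+K(\beta)$ and $B=|\alpha|-|\beta|$ as if they were independent of each other.

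The gap lies in the two analytic steps, which are exactly the ``technical details'' the paper does not reproduce. First, you cannot discard $\mathbf{1}_{\Lambda_N}$ at the level of the exponential: $\E\bigl[e^{-\frac{t}{N}S_N}\bigr]$ without the indicator is $+\infty$, because for a one-column partition $\alpha=(1^m)$ one has $K(\alpha)=-\binom{m}{2}$, so $e^{-\frac{t}{N}S_N}q_t^{m}=e^{\frac{t}{N}\binom{m}{2}-\frac{tm}{2}}\to\infty$ once $m\gg N$; such $\alpha$ lie outside $\Lambda_N$, and it is precisely the constraint $\ell(\alpha)\leq A_N$ that keeps the true sum finite. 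The indicator must be removed term by term from the polynomial moments (where Cauchy--Schwarz against $\Pbb(\Lambda_N^{c})^{1/2}=O(e^{-cN})$ works), not from the exponential. Second, your remainder bound is vacuous: $\E\bigl[|S_N|^{2p+2}e^{\frac{t}{N}|S_N|}\mathbf{1}_{\Lambda_N}\bigr]=+\infty$ for every fixed $N$, since already the single-row partitions $\alpha=(m)$, which do belong to $\Lambda_N$, give $K(\alpha)=\binom{m}{2}$ and $q_t^{m}e^{\frac{t}{N}\binom{m}{2}}\to\infty$; the exponential moment of $|S_N|$ at scale $1/N$ simply does not exist, so the Cauchy--Schwarz splitting cannot absorb the factor $e^{\frac{t}{N}|S_N|}$. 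What saves the argument is the sign of $S_N$ (the Taylor remainder of $e^{-x}$ for $x\geq 0$ needs no exponential factor) together with a uniform lower bound on $c_2(\lambda_N(\alpha,\beta,n))$ on $\Lambda_N$ to dominate the region where $S_N<0$ — this is the ``global bound on $c_2$'' and ``slightly modified dominated convergence'' the paper alludes to, and it is the part of the proof your proposal does not supply.
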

In a certain sense, the asymptotic expansion~\eqref{eq:asympt_exp} can be rewritten as an integral over $\mathcal{R}^2$ with respect to a measure that randomizes the number of generic ramification points and the degree of ramified coverings, and such that the ramified coverings, conditioned to have degree $n$ and number of (generic) ramification points $2k$, are uniform in the Hurwitz space $\mathcal{H}_1(n,2k)$. The precise measure (which has infinite mass, hence it cannot be normalized) is
\[
\rho_t(dX)=\sum_{n\geq 0}e^{-\frac{t}{2}n}\sum_{k\geq 1}\frac{t^{2k}}{(2k)!}\sum_{Y\in\mathcal{H}_1(n,2k)}\delta_{Y}(dX),
\]
where the Dirac mass is applied to equivalence classes in $\mathcal{R}$. We obtain two random ramified coverings $X_1,X_2\to\Tbb$ which are coupled by a random integer $n$, and this coupling reflects the coupling of random partitions obtained in Theorem~\ref{thm:highest_weights_partitions}.

\begin{theorem}[\cite{LM2}]
For any real $t>0$, and any integers $p\geq 0$ and $N\gg 1$ large enough, there is an explicit functional $\Phi_{p,t,N}:\mathcal{R}^2\to\R$ such that
\begin{equation}
Z_{\U(N)}(1,t)=\int \Phi_{p,t,N}(X,Y)N^{\chi(X)+\chi(Y)}\rho_t(dX)\rho_t(dY)+O(N^{-2p-2}),
\end{equation}
where the integral is over $(X,Y)\in\mathcal{R}^2$ such that $X$ and $Y$ only have generic ramifications, and $\chi(X)+\chi(Y)\geq -2p$.
\end{theorem}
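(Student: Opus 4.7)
The strategy is to unwind the asymptotic expansion~\eqref{eq:asympt_exp} through its combinatorial source and repackage each contribution as an integral over pairs of ramified coverings. Applying Theorem~\ref{thm:highest_weights_partitions} with $F\equiv 1$ yields
\begin{equation*}
Z_{\U(N)}(1,t) = \frac{\theta(q_t)}{\phi(q_t)^2}\,\E\!\left[e^{-\frac{t}{N}Q(\alpha,\beta,n)}\,\mathbf{1}_{\Lambda_N}(\alpha,\beta,n)\right],\quad Q:=K(\alpha)+K(\beta)+n(|\alpha|-|\beta|),
\end{equation*}
with $\alpha,\beta$ independent $q_t$-uniform partitions and $n\sim\Gfr_{1,t}$ independent of them. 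Standard exponential tail estimates for $\Ufr_{q_t}$ give $\Pbb(\Lambda_N^{c})=O(e^{-cN})$, so the indicator $\mathbf{1}_{\Lambda_N}$ can be dropped into the $O(N^{-2p-2})$ error.

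I then Taylor-expand $e^{-tQ/N}$ to order $2p$; the remainder is pointwise bounded by $(t|Q|/N)^{2p+1}e^{t|Q|/N}/(2p+1)!$. A parity argument refines this to $O(N^{-(2p+2)})$ in expectation: the joint involution $(\alpha,\beta,n)\mapsto(\alpha^{\mathsf T},\beta^{\mathsf T},-n)$ preserves the law while sending $Q\mapsto -Q$, so every odd moment $\E[Q^{2j+1}]$ vanishes. Multinomial expansion combined with the independence of $\alpha,\beta,n$ gives
\begin{equation*}
\E[Q^m] = \sum_{\substack{a+b+c+d=m\\ a,b\text{ even}}}\binom{m}{a,b,c,d}(-1)^d\,\E\!\left[K(\alpha)^{a}|\alpha|^{c}\right]\E\!\left[K(\beta)^{b}|\beta|^{d}\right]\E[n^{c+d}],
\end{equation*}
where $a,b$ must be even by the transposition symmetry applied coordinate-wise, and $c+d$ must be even because $n$ has a symmetric law under $\Gfr_{1,t}$.

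Frobenius's identity in its refined form $\sum_\alpha q^{|\alpha|}|\alpha|^{c}K(\alpha)^{2s}=\sum_{n\geq 0}q^{n}n^{c}H_1(n,2s)$ rewrites each partition expectation as a weighted sum over generic Hurwitz coverings of $\Tbb$, and the Riemann--Hurwitz relation $\chi(X)=-2s$ on $\mathcal{H}_1(n,2s)$ converts $N^{-(2s+2s')}$ into $N^{\chi(X)+\chi(Y)}$. After absorbing the prefactors $(2s)!/t^{2s}$ from the definition of $\rho_t$, the surviving terms—those with $\chi(X)+\chi(Y)\geq -2p$—are recognized as an integral against $\rho_t(dX)\rho_t(dY)$, with an explicit polynomial integrand
\begin{equation*}
\Phi_{p,t,N}(X,Y)=\sum_{c,d\geq 0}\gamma_{c,d,N}(t)\,\deg(X)^{c}\deg(Y)^{d},
\end{equation*}
whose coefficients $\gamma_{c,d,N}(t)$ package the multinomial factors, signs, powers of $t$, and the coupling moments $\E[n^{c+d}]$ of $\Gfr_{1,t}$.

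The main obstacle is the bookkeeping rather than any single analytic estimate. One has to verify that $\Phi_{p,t,N}$ really depends only on the discrete invariants $(\deg X,\chi X,\deg Y,\chi Y)$; that the coupling of the two coverings through the common integer $n$—the probabilistic avatar of the constant coordinate in the almost-flat-highest-weight decomposition—is faithfully captured by the product $\rho_t\otimes\rho_t$ together with the moments $\E[n^{c+d}]$; and that the leading constant $\theta(q_t)/\phi(q_t)^2$ is correctly produced by the $(\chi(X),\chi(Y))=(0,0)$ stratum under the convention adopted for the unramified part of $\rho_t$. Once this is set up, the dominated-convergence step and the termwise remainder control are routine given the finite moments of $K(\alpha),|\alpha|$ under $\Ufr_{q_t}$ and of $n$ under $\Gfr_{1,t}$.
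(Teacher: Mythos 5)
Your overall route is the one the paper intends (the survey itself does not prove this theorem --- it defers to \cite{LMprep} and only sketches the strategy): start from Theorem~\ref{thm:highest_weights_partitions} with $F\equiv 1$, Taylor-expand $q_t^{\frac{2}{N}Q}=e^{-tQ/N}$, kill odd moments of $Q$ by the involution $(\alpha,\beta,n)\mapsto(\alpha^{\mathsf T},\beta^{\mathsf T},-n)$, convert $\E[K(\alpha)^{2s}|\alpha|^{c}]$ into weighted Hurwitz counts via Frobenius, and use Riemann--Hurwitz ($\chi(X)=-2s$ on $\mathcal{H}_1(n,2s)$) to recognize $N^{\chi(X)+\chi(Y)}$ and the measure $\rho_t\otimes\rho_t$. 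The bookkeeping of prefactors $(2s)!/t^{2s}$, of the $\phi(q_t)^{\pm 2}$ cancellation, and of the coupling through $\E[n^{c+d}]$ is consistent with the coefficients $a_k(t)$ being functionals of $\mathcal{F}_k(q_t)$ and $\theta(q_t)$.

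There is, however, one step that fails as literally written: you drop the indicator $\mathbf{1}_{\Lambda_N}$ \emph{before} expanding, claiming $\E[e^{-tQ/N}\mathbf{1}_{\Lambda_N}]=\E[e^{-tQ/N}]+O(e^{-cN})$. The unrestricted expectation $\E[e^{-tQ/N}]$ is in fact $+\infty$: taking $\beta=\emptyset$ and $\alpha=(1^m)$ a single column gives $K(\alpha)=-m(m-1)/2$, so the summand contains $e^{+tm(m-1)/(2N)}q_t^{m}$, which diverges over $m$ for every fixed $N$. The constraint $\ell(\alpha)\leq A_N\sim N/2$ in $\Lambda_N$ is exactly what keeps the exponential under control (it yields $K(\alpha)\geq -\ell(\alpha)|\alpha|$, hence $e^{-tK(\alpha)/N}\leq q_t^{-|\alpha|}$, borderline against the weight $q_t^{|\alpha|}$ --- this is the ``slightly modified dominated convergence'' the survey alludes to). The fix is to reverse the order of operations: expand first, then remove the indicator term by term, using that each polynomial moment $\E[|Q|^{m}]$ is finite and that $\E[|Q|^{m}\mathbf{1}_{\Lambda_N^{c}}]=O(e^{-cN})$ by Cauchy--Schwarz and the tail bounds for $\Ufr_{q_t}$, and control the Taylor remainder only on $\Lambda_N$ via a lower bound on $c_2(\lambda_N(\alpha,\beta,n))$ valid there. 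A second, minor point: the coefficients of $\Phi_{p,t,N}$ must carry the factors $N^{-(c+d)}$ left over from $N^{-m}=N^{\chi(X)+\chi(Y)}N^{-(c+d)}$ and must depend on $(\chi(X),\chi(Y))$ through the multinomial weights, not only on the degrees; you acknowledge this in your closing paragraph, but your displayed formula for $\Phi_{p,t,N}$ does not reflect it.
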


Similar results hold for other groups, such as $\SU(N)$, $\mathrm{SO}(N)$ or $\mathrm{Sp}(N)$, and all proofs rely on a similar description of $Z_G(1,t)$ as the partition function of a Gaussian measure on $\widehat{G}$, or equivalently, the trace of the operator $e^{\frac{t}{2}\Delta_G}$ as a linear operator on $Z^2(G)$. There is also an equivalent formula where the cutoff is put on the degree of the coverings instead of their Euler characteristic.

To conclude, another promising direction towards the gauge/string duality is the use of \emph{Gromov--Witten invariants}. Indeed, they correspond to intersections of some cohomology classes of moduli spaces of maps $\Sigma\to M$, for a symplectic manifold $M$ and a Riemann surface $\Sigma$, and have been used in the last decades to describe correlation functions and partition functions of string theories from a more ``cohomological" point of view \cite{Kon95b,KM98}. When the target $M$ is a compact Riemann surface, Hurwitz theory coincides with a part of Gromov--Witten theory, according to a result by Okounkov--Pandharipande \cite{OP2}, and an application to Yang--Mills theory on a torus is also described in \cite{LM2}, but we will not elaborate further because it would lead us to a much more geometric realm.

\section*{Acknowledgements}

These notes combine the content of a talk given in 2024 at the International Congress of Mathematical Physics and a lecture on two-dimensional Yang--Mills theory given in 2025 at \'Ecole Normale Sup\'erieure-PSL, and the author is grateful to the respective organizations for the opportunities to speak there. The author also thanks Elias Nohra for a careful feedback and useful suggestions, Nguyen Viet Dang for enlightening discussions, as well as the anonymous referee for helpful comments that improved the clarity of the manuscript.

\bibliographystyle{amsplain}
\bibliography{YM2-survey}

\end{document}